\newtheorem{thm}[equation]{Theorem}
\let\c@subsubsection\c@equation
\newtheorem{lem}[equation]{Lemma}
\newtheorem{prop}[equation]{Proposition}
\theoremstyle{remark}
\newtheorem{rmk}[equation]{Remark}
\theoremstyle{definition}
\newtheorem{defi}[equation]{Definition}
\newcommand{\Hom}{\mathrm{Hom}}
\newcommand{\inthomeff}{\mathbf{hom}^{\mathrm{eff}}}
\newcommand{\Gm}{\mathbb G _{m}}
\newcommand{\spec}[1]{\mathrm{Spec}(#1)}
\newcommand{\generators}{\mathcal G}
\newcommand{\sphere}{\mathbf 1}
\DeclareMathOperator*{\hocolim}{hocolim}
\newcommand{\colim}{\mathrm{colim}}
\newcommand{\DMeff}[1]{DM^{\mathrm{eff}}_{#1}}
\newcommand{\DM}[1]{DM_{#1}}
\newcommand{\northogonal}[2]{DM _{#1}^{\perp}(#2)}
\newcommand{\HINST}[1]{\mathbf{HI}_{#1}}
\newcommand{\hr}{\sphere _{R}}
\numberwithin{equation}{subsection}
\begin{document}

%%%%%%%%%%%%%%%%%%%%%%%%%%%%%%%%%%%%%%%%%%%%
%%%%%%%%%%%%%%%%%%%%%%%%%%%%%%%%%%%%%%%%%%%%
%%%%%%%%%%%%%%%%%%%%%%%%%%%%%%%%%%%%%%%%%%%%

\title{Incidence equivalence and the Bloch-Beilinson filtration}

%\dedicatory{}

%%%%%%%%%%%%%%%%%%%%%%%%%%%%%%%%%%%%%%%%%%%%
%%%%%%%%%%%%%%%%%%%%%%%%%%%%%%%%%%%%%%%%%%%%
%%%%%%%%%%%%%%%%%%%%%%%%%%%%%%%%%%%%%%%%%%%%

\author{Pablo Pelaez}
\address{Chebyshev Laboratory\\
St. Petersburg State University\\
14th Line V. O., 29B\\
Saint Petersburg 199178 Russia}
\email{pablo.pelaez@gmail.com}

\author{Araceli Reyes}
\address{Instituto de Matem\'aticas, Ciudad Universitaria, UNAM, DF 04510, M\'exico}
\email{rm@ciencias.unam.mx}

%%%%%%%%%%%%%%%%%%%%%%%%%%%%%%%%%%%%%%%%%%%%
%%%%%%%%%%%%%%%%%%%%%%%%%%%%%%%%%%%%%%%%%%%%
%%%%%%%%%%%%%%%%%%%%%%%%%%%%%%%%%%%%%%%%%%%%

\subjclass[2010]{Primary 14C15, 14C17,
14C25, 14C35, 19E15; Secondary 18G80}

\keywords{Bloch-Beilinson-Murre filtration, Chow Groups, Filtration on the Chow Groups,
Incidence Equivalence, Mixed Motives}

%%%%%%%%%%%%%%%%%%%%%%%%%%%%%%%%%%%%%%%%%%%%
%%%%%%%%%%%%%%%%%%%%%%%%%%%%%%%%%%%%%%%%%%%%
%%%%%%%%%%%%%%%%%%%%%%%%%%%%%%%%%%%%%%%%%%%%

\begin{abstract}
Let $X$ be a smooth projective variety of dimension $d$
over an arbitrary base field $k$ and
$CH^n(X)_{\mathbb Q}$ be  the $\mathbb Q$-vector space
of codimension $n$ algebraic cycles of $X$ modulo rational equivalence,
$1\leq n \leq d$.
Consider the $\mathbb Q$-vector subspaces
$CH^n(X)_{\mathbb Q} \supseteq CH^n_{\mathrm{alg}}(X)_{\mathbb Q}
\supseteq CH^n_{\mathrm{inc}}(X)_{\mathbb Q}$
of algebraic cycles which are,
respectively, algebraically and incident
(in the sense of Griffiths) equivalent to zero.
 
Our main result computes $CH^d_{\mathrm{inc}}(X)_{\mathbb Q}$
(which coincides with the Albanese kernel $T(X)_{\mathbb Q}$
when $k$ is algebraically closed) in terms of
Voevodsky's triangulated category of  motives $DM_k$, namely, we
show that $CH^d_{\mathrm{inc}}(X)_{\mathbb Q}$ is given by
the second step of the orthogonal filtration $F^{\bullet}$ on
$CH^d(X)_{\mathbb Q}$,
i.e. $F^2 CH^d (X)_{\mathbb Q}=
CH^d_{\mathrm{inc}}(X)_{\mathbb Q}$.  
The orthogonal filtration
$F^\bullet$ on $CH^n(X)_{\mathbb Q}$ was introduced by the first author,
and is an unconditionally finite filtration satisfying several of the properties
of the still conjectural Bloch-Beilinson filtration.

We also prove that
the exterior product and intersection product of algebraic
cycles algebraically equivalent to zero is contained in the second
step of the orthogonal filtration.

Furthermore, if we assume that the field $k$ is either finite or
the algebraic closure of a finite field,
then the main result holds in any codimension, i.e. 
$F^2 CH^n_{\mathrm{alg}}(X)_{\mathbb Q}=
CH^n_{\mathrm{inc}}(X)_{\mathbb Q}$.  
We also compute in the whole Chow group, $CH^n(X)_{\mathbb Q}$,
the second step of the orthogonal filtration
$F^2 CH^n(X)_{\mathbb Q}$ in terms of the vanishing of several
intersection pairings.  
\end{abstract}

%%%%%%%%%%%%%%%%%%%%%%%%%%%%%%%%%%%%%%%%%%%%%%
%%%%%%%%%%%%%%%%%%%%%%%%%%%%%%%%%%%%%%%%%%%%%%
%%%%%%%%%%%%%%%%%%%%%%%%%%%%%%%%%%%%%%%%%%%%%%
\thanks{The first author was supported by the Russian Science
Foundation grant 19-71-30002.}
 
\maketitle

%%%%%%%%%%%%%%%%%%%%%%%%%%%%%%%%%%%%%%%%%%%%%%
%%%%%%%%%%%%%%%%%%%%%%%%%%%%%%%%%%%%%%%%%%%%%%
%%%%%%%%%%%%%%%%%%%%%%%%%%%%%%%%%%%%%%%%%%%%%%
\section{Introduction}  \label{sec.introd}

\subsection{}

Let $X$ be a smooth projective variety of dimension $d$ over $\mathbb C$,
$CH^n(X)$ the group of codimension $n$
algebraic cycles of $X$ modulo rational equivalence, $1\leq n\leq d$, and
$CH^n_{\mathrm{alg}}(X)\subseteq CH^n (X)$ the subgroup of algebraic cycles
which are algebraically equivalent to zero.  

One of the classic tools to study
$CH^n_{\mathrm{alg}}(X)$ is  Weil's  Abel-Jacobi map
\cite{MR0050330}*{\S IV.27}
$\psi _n: CH^n_{\mathrm{alg}}(X)\rightarrow J^n(X)$, 
where $J^n(X)$ is the
Weil-Griffiths  $n$-th intermediate Jacobian 
\cite{MR0050330}*{\S IV.24-26}, \cite{MR0233825}*{Ex. 2.1}
(see \cite{MR0233825}*{Thm. 2.54} for a comparison).
The kernel of $\psi _n$ is the subgroup 
$CH^n_{AJ}(X)\subseteq CH^n_{\mathrm{alg}}(X)$
of algebraic cycles Abel-Jacobi equivalent to zero.

In \cite{MR0309937}*{p. 6-7},
Griffiths introduced an intermediate subgroup
$CH^n_{AJ}(X)\subseteq CH^n_{\mathrm{inc}}(X)
\subseteq CH^n_{\mathrm{alg}}(X)$,
the subgroup of algebraic cycles incident equivalent to zero 
(see \ref{def.inc.equiv}),
where the quotient groups $CH^n_{\mathrm{alg}}(X)/CH^n_{AJ}(X)$ and
$CH^n_{\mathrm{alg}}(X)/CH^n_{\mathrm{inc}}(X)$
are conjecturally isogeneous 
\cite{MR0309937}*{problem B}.
The conjecture is known to be true for
$n=1,d$ (in this case both groups are equal by
the classical theory of the Picard and Albanese varieties)
 and for $n=2$  \cite{MR0815486}*{Thm. 2.4}.

In contrast to $CH^n_{AJ}(X)$, which is defined by transcendental methods, 
$CH^n_{\mathrm{inc}}(X)$  is defined in terms of the vanishing of intersection
pairings.  Thus, the subgroup of algebraic cycles incident equivalent
to zero, $CH^n_{\mathrm{inc}}(X)$, may be considered for smooth
projective varieties defined over an arbitrary base field $k$.

Hereafter we will assume that $X$ is a smooth projective variety 
of dimension $d$ over
an arbitrary base field $k$.
Our first main result \eqref{main.thm1} 
shows that $CH^d_{\mathrm{inc}}(X)_{\mathbb Q}$
is isomorphic to the second step, $F^2 CH^d (X)_{\mathbb Q}$,
of the orthogonal filtration $F^\bullet$ on $CH^d (X)_{\mathbb Q}$
 \cite{MR3614974}*{6.1.4}, \eqref{def.orth.fil.Chow}.

The orthogonal filtration on the Chow groups with coefficients
in a commutative ring $R$,
$CH^n(X)_R$, is an unconditionally finite filtration
which satisfies \cite{MR3614974}*{6.1.4}, with rational coefficients,  
several properties of the still conjectural Bloch-Beilinson-Murre filtration
\cite{MR2449178}*{conjecture 11.21},
\cite{MR923131}, \cite{MR2723320}, \cite{MR1225267}.
The first step of the filtration $F^1 CH^n(X)_R$ is given by
the $R$-submodule of algebraic cycles which are 
numerically equivalent to zero in
$CH^n(X)_R$ \cite{MR3614974}*{5.3.6}, \eqref{prop.nonperf.comp}.

Since for zero-cycles, $CH^d_{\mathrm{inc}}(X)$ is equal to 
the Albanese kernel (in case the base field $k$ is
algebraically closed),
our  result \eqref{main.thm1} 
provides further evidence \cite{MR1265533}*{Lem. 2.10}
that the orthogonal filtration on the Chow groups \cite{MR3614974}*{6.1.4},
\eqref{def.orth.fil.Chow} is
a good candidate for the Bloch-Beilinson-Murre filtration.

The construction of the
orthogonal filtration can be sketched quickly as follows.
We consider a tower 
in Voevodsky's triangulated category of motives $DM_k$:
\begin{align*} 	
	\cdots \rightarrow bc_{\leq -3}(\hr ) \rightarrow bc_{\leq -2}(\hr ) 
	\rightarrow bc_{\leq -1}(\hr )  \rightarrow \hr
\end{align*}
where $bc_{\leq m}:DM_k \rightarrow DM_k$ is a triangulated functor
\cite{MR3614974}*{3.2.3}, \cite{MR4486247}*{2.3.1}, and
$\hr$ is the motive of a point with $R$-coefficients.   Then,
the $m$-step of the filtration, $F^mCH^n (X)_R$,
 is defined as the image of the
induced map:
\begin{align*} \Hom _{DM_k}(M(X)(-n)[-2n], bc_{\leq -m}\hr ) \rightarrow &
\Hom _{DM_k}(M(X)(-n)[-2n], \hr ) 
\\ & \; \; \cong CH^{n}(X)_{R}.
\end{align*}
where the last isomorphism follows from \cite{MR1883180},
see \eqref{eq.Chow.equal.mot}.

Our second main result \eqref{main.thm3} shows that 
the exterior product and intersection product of algebraic
cycles algebraically equivalent to zero is contained in the second
step of the orthogonal filtration $F^\bullet$, i.e.
$\alpha \otimes \beta \in F^2CH^{n+m}(X\times Y)_{\mathbb Q}
\subseteq CH^{n+m}(X\times Y)_{\mathbb Q}$ for
$\alpha \in CH^n_{\mathrm{alg}}(X)_{\mathbb Q}$,
$\beta \in CH^m_{\mathrm{alg}}(Y)_{\mathbb Q}$; and
$\alpha \cdot \beta \in F^2CH^{n+m}(X)_{\mathbb Q}
\subseteq CH^{n+m}(X)_{\mathbb Q}$ for
$\alpha \in CH^n_{\mathrm{alg}}(X)_{\mathbb Q}$,
$\beta \in CH^m_{\mathrm{alg}}(X)_{\mathbb Q}$.

Our last main result \eqref{main.thm2}, where
the base field $k$ is assumed to be either finite or the algebraic closure
of a finite field, 
computes
$F^2CH^n(X)_{\mathbb Q}$, $2\leq n\leq d$, in terms of the vanishing of
several intersection pairings, and shows that
$CH^n_{\mathrm{inc}}(X)_{\mathbb Q}$ is isomorphic to the 
second step, $F^2 CH^n_{\mathrm{alg}}(X)_{\mathbb Q}$,
of the orthogonal filtration  \cite{MR3614974}*{6.1.4},
\eqref{def.orth.fil.Chow} $F^\bullet$
on $CH^n(X)_{\mathbb Q}$
restricted to $CH^n_{\mathrm{alg}}(X)_{\mathbb Q}$.

The paper is organized as follows: in section \ref{sec.preel} we introduce
the notation  and prove
some results that will be used in the rest of the paper, in section \ref{sec.mainres} we state our main results.
In section \ref{sec.firstres} we show that the second step of the
orthogonal filtration, $F^2 CH^n (X)_{\mathbb Q}$, always satisfies
the conditions \eqref{vanishing.pair1} and \eqref{vanishing.pair2},
and establish the necessary results for the proof of our
first two main theorems \eqref{main.thm1}, \eqref{main.thm3}.
In section \ref{sec.furthred}, we study the second step of the
orthogonal filtration, $F^2 CH^n (X)_{\mathbb Q}$, in terms
of Voevodsky's homotopy $t$-structure and we
establish the necessary results for
the proof of our last main theorem \eqref{main.thm2}.

\section{Preliminaries}  \label{sec.preel}

In this section we fix the notation that will be used throughout the rest
of the paper and collect together facts  from the literature
that will be necessary to establish our results.  With the exception
of \S\ref{ss.orthfil}, the results of this section are not original.

\subsection{Definitions and Notation}	\label{subsec.defandnots}		

Given a base field $k$, we
will write $Sch_{k}$ for the category of $k$-schemes
of finite type and $Sm_{k}$ for the full
subcategory of $Sch_{k}$ consisting of smooth $k$-schemes regarded
as a site with the Nisnevich topology.	  Let $SmProj_k$ be the
full subcategory of $Sm_k$ 
which consists of smooth projective $k$-schemes.
Given a field extension $K/k$
and $X,Y \in Sch_k$, let $X_K$ and $X\times _k Y$ denote, respectively,
$X\times _{\spec{k}}\spec{K}$ and $X\times _{\spec{k}}Y$.
If $X\in Sch_k$, we will write $X(k)$ for the set of $k$-points of $X$, and
$k(X)$ for its function field in case $X$
is reduced and irreducible.

We will use the following notation in all the categories under consideration: $0$ will
denote the zero object (if it exists), 
and $\cong$ will denote that a map (resp. a functor) is an
isomorphism (resp. an equivalence of categories).

We shall use freely the language of triangulated categories.  Our main references will 
be \cite{MR1812507}, \cite{MR751966}.
Given a triangulated category, we will write $[1]$ 
(resp. $[-1]$) to denote its suspension 
(resp. desuspension) functor; and for $n>0$, the composition of $[1]$
(resp. $[-1]$) iterated $n$-times will be
$[n]$ (resp. $[-n]$).  If $n=0$, then
$[0]$ will be the identity functor.

\subsection{Voevodsky's triangulated category of motives}
	
The Suslin-Voevodsky category of finite correspondences over $k$, $Cor_{k}$,
is the category with
the same objects as $Sm_{k}$
and where the morphisms $c(U,V)$ are given by the group of finite
relative cycles on $U\times _{k}V$ over $U$ \cite{MR1764199}	
 with composition as in
\cite{MR2804268}*{p. 673 diagram (2.1)}.  
The graph of a morphism in $Sm_{k}$
induces
a functor $Gr : Sm_{k}\rightarrow Cor_{k}$.  A Nisnevich sheaf with
transfers is an additive contravariant functor $\mathcal F$ from $Cor_{k}$ to the category
of abelian groups such that the restriction $\mathcal F \circ Gr$ is a Nisnevich
sheaf.  We will write $Shv^{tr}$ for the category of Nisnevich sheaves with
transfers which is an abelian category \cite{MR2242284}*{13.1}.  
For $X\in Sm_{k}$,
let $\mathbb Z _{tr}(X)$ be the Nisnevich sheaf with transfers represented by $X$
\cite{MR2242284}*{2.8 and 6.2}.

We will write $K(Shv ^{tr})$ for the category of chain complexes 
(unbounded) on $Shv ^{tr}$ equipped
with the injective model structure \cite{MR1780498}*{Prop. 3.13}, and 
$D(Shv ^{tr})$ for its homotopy category.  Let
$K^{\mathbb A ^{1}}(Shv ^{tr})$ be
the left Bousfield localization \cite{MR1944041}*{3.3} of $K(Shv ^{tr})$ with respect to the set
of maps
$\{ \mathbb Z _{tr}(X\times _{k}\mathbb A ^{1})[n]\rightarrow \mathbb Z _{tr}(X)[n]:
X\in Sm_{k}; n\in \mathbb Z\}$ induced by the projections $p:X\times _{k}\mathbb A ^{1}
\rightarrow X$. Voevodsky's  triangulated category of effective motives
$\DMeff{k}$ is the homotopy category of $K^{\mathbb A ^{1}}(Shv ^{tr})$ \cite{MR1764202}.

Let $T\in K^{\mathbb A ^{1}}(Shv ^{tr})$  be the chain complex  
$\mathbb Z_{tr}(\Gm)[1]$ \cite{MR2242284}*{2.12}, where $\Gm$ is the $k$-scheme
$\mathbb A ^1\backslash \{ 0\}$ pointed by $1$.  
We will write $Spt_{T}(Shv ^{tr})$ for the category of symmetric
$T$-spectra on $K^{\mathbb A ^{1}}(Shv ^{tr})$ equipped with the model structure
defined in \cite{MR1860878}*{8.7 and 8.11}, 
\cite{MR2438151}*{Def. 4.3.29}.  
Voevodsky's  triangulated category of  motives
$\DM{k}$ is the homotopy category of $Spt_{T}(Shv ^{tr})$ 
\cite{MR1764202}.  

Given $X\in Sm_{k}$, we will write $M(X)$ for the image of $\mathbb Z _{tr}(X)\in D(Shv^{tr})$
under the $\mathbb A ^{1}$-localization map $D(Shv ^{tr})\rightarrow \DMeff{k}$.  Let
$\Sigma ^{\infty}:\DMeff{k} \rightarrow \DM{k}$ be the suspension functor 
\cite{MR1860878}*{7.3}, 
we will abuse notation and simply write $E$ for $\Sigma
^{\infty}E$, $E\in \DMeff{k}$.  Given a map $f:X\rightarrow Y$ in $Sm _k$, we will 
write $f:M(X)\rightarrow M(Y)$ for the map induced by $f$ in $\DM{k}$.

We observe that $\DMeff{k}$ and $\DM{k}$ are tensor triangulated categories 
\cite{MR2438151}*{Thm. 4.3.76 and Prop. 4.3.77} 
with unit $\sphere = M(\spec{k})$. 
We will write $E(1)$ for $E\otimes M(\Gm)[-1]$, $E\in DM_k$ and inductively
$E(n)=(E(n-1))(1)$, $n\geq 0$.  The functor $DM_k\rightarrow DM_k$,
$E\mapsto E(1)$ is an equivalence of categories \cite{MR1860878}*{8.10}, 
\cite{MR2438151}*{Thm. 4.3.38}; 
we will write $E\mapsto E(-1)$ for
its inverse, and inductively $E(-n)=(E(-n+1))(-1)$,
$n>0$.  By convention $E(0)=E$ for $E\in \DM{k}$.

\subsubsection{} \label{coeffs.not} 
Let $R$ be a commutative ring with $1$.  We will write  
$E_{R}$ for $E\otimes \sphere _{R}$ where $E\in \DM{k}$, and $\sphere _R$
is the motive of a point with $R$-coefficients $M(\spec{k})\otimes R$.
In case the base field $k$ is non-perfect of characteristic $p$, we will always assume
that $\frac{1}{p}\in R$.

\subsubsection{Change of base field} \label{sssec.changefield}
Let $L/k$ be a field extension.  The base change functor 
$Sm_k \rightarrow Sm_L$, $X\mapsto X_L$ induces a
triangulated functor \cite{MR2438151}*{Thm. 4.5.24},
\cite{MR3590347}*{p. 298}:
\begin{align} \label{eq.ext.scalars}
    \phi ^\ast: DM_k \rightarrow DM_L
\end{align}
where $\phi ^\ast (M(X))=M(X_L)$, $X\in Sm_k$.

The following result is well-known and essentially due to Bloch.
\begin{prop} \label{torsion.ext.scalars}
With the notation and conditions of  \eqref{coeffs.not}.
Let $E$, $F\in DM_k$.
The kernel of the map induced by \eqref{eq.ext.scalars} is torsion:
\begin{align}
	\xymatrix{\phi^\ast: 
	\Hom_{DM_k}(E,F) \ar[r]& \Hom_{DM_L}(\phi ^\ast E,\phi ^\ast F)
	}
\end{align}
\end{prop}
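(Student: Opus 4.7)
The plan is to follow Bloch's classical argument for Chow groups and adapt it to Hom groups in $DM_k$, reducing the extension $L/k$ in stages to more tractable pieces. First I would write $L$ as a filtered colimit of its finitely generated $k$-subextensions $L_\alpha$ and appeal to continuity of the motivic formalism with respect to filtered limits of base schemes with affine transitions (as developed by Cisinski--D\'eglise), so that
\begin{align*}
    \Hom_{DM_L}(\phi^\ast E, \phi^\ast F) \; = \; \colim_\alpha \Hom_{DM_{L_\alpha}}(\phi_\alpha^\ast E, \phi_\alpha^\ast F)
\end{align*}
(at least for compact $E$, which will suffice for the applications in the paper). Thus a class $f$ with $\phi^\ast(f) = 0$ is already killed by some $\phi_\alpha^\ast$, and the problem is reduced to the case when $L/k$ is finitely generated.

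Next I would factor a finitely generated $L/k$ as a purely transcendental extension $k \subseteq k(t_1,\ldots,t_r)$ followed by a finite extension, and treat the two layers separately. For a finite separable extension of degree $n$, $\spec{L} \to \spec{k}$ is finite \'etale, and in the six-functor formalism for motives the pair $(\phi_\sharp, \phi^\ast)$ is adjoint with $\phi_\sharp \phi^\ast (M) \simeq M \otimes M(\spec{L})$; the classical trace whose composition with the unit realizes multiplication by $n$ then shows that any $f$ with $\phi^\ast(f) = 0$ is $n$-torsion. For a finite purely inseparable extension (only in characteristic $p > 0$), $\spec{L} \to \spec{k}$ is a universal homeomorphism and $\phi^\ast$ becomes an equivalence after inverting $p$, so under the coefficient convention \eqref{coeffs.not} there is nothing further to check; otherwise the kernel is directly seen to be $p^\infty$-torsion. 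For the purely transcendental layer I would induct on the transcendence degree, reducing to $L = k(t)$: the spreading-out property $\spec{k(t)} = \lim_g \spec{k[t,1/g]}$ makes any $\phi^\ast(f)=0$ vanish in $DM_U$ for some nonempty open $U \subseteq \mathbb{A}^1_k$; restricting to any closed point $x \in U$ (whose residue field $\kappa(x)$ is automatically finite over $k$) gives $\phi_{\kappa(x)}^\ast(f) = 0$, and the finite-extension case then forces $f$ to be torsion.

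The hard part will be the continuity/spreading-out input required both for the reduction to finitely generated $L/k$ and for the case $L = k(t)$: it forces one to invoke the full formalism of motives over a general base together with its compatibility with filtered limits of the base scheme, and to be careful about compactness hypotheses on $E$. Once these continuity statements are in hand, the actual torsion bound is produced by the classical degree-trace identity in the finite separable layer, with the inseparable layer absorbed into the coefficient hypothesis on $R$.
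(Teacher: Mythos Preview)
Your proposal is correct and is precisely the content of what the paper leaves implicit: the paper's own proof simply reduces to a perfect base field via \cite{MR3590347}*{Cor.~4.13, Thm.~4.12} and then says the result follows from Jannsen \cite{MR2723320}*{Lem.~1A.3} \emph{mutatis mutandis}, and Jannsen's lemma is exactly the Bloch spreading-out/specialization argument you outline (colimit over finitely generated subextensions, factor into transcendental and finite layers, trace for the finite part, spread out and specialize for $k(t)$). The only organizational difference is that the paper absorbs the inseparable issue up front by passing to the perfect closure of $k$ (legitimate under \eqref{coeffs.not}), whereas you handle purely inseparable finite extensions inline; both are fine. Your caveat about compactness of $E$ for the continuity input is well taken and matches how the proposition is actually used in the paper.
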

\begin{proof}
By \cite{MR3590347}*{Cor. 4.13, Thm. 4.12} we may assume that $k$ is 
a perfect field.
Then the result follows from \cite{MR2723320}*{Lem. 1A.3}
 mutatis-mutandis.
\end{proof}

\subsection{Voevodsky's slice filtration}  \label{ss.slicefil}

The triangulated category of motives,
$\DM{k}$, is a compactly generated triangulated category 
\cite{MR1308405}*{Def. 1.7} with
compact generators \cite{MR2438151}*{Thm. 4.5.67}:
\begin{align}  \label{eq.DMgens}
 \generators _{DM_k}=\{ M(X)(r): X\in Sm_{k}; r \in \mathbb Z\}.
\end{align}

For $m\in \mathbb Z$, we consider:
\begin{align}  \label{eq.DMeffgenstw}
 \generators ^{\mathrm{eff}}(m)=\{ M(X)(r): X\in Sm_{k}; r\geq m\}
 \subseteq \generators _{DM_k}.
\end{align}

\subsubsection{}  \label{subsubsec.neffDM}
The slice filtration \cite{MR1977582},
\cite{MR2600283}*{p. 18}, \cite{MR2249535}
is the following tower of
triangulated subcategories of $DM_k$:
\begin{align}  \label{eq.slice.filtration}
\cdots \subseteq \DMeff{k} (m+1)\subseteq \DMeff{k} (m) \subseteq 
\DMeff{k} (m-1)\subseteq \cdots
\end{align}
where
$\DMeff{k} (m)$ is the smallest 
full triangulated subcategory of  $\DM{k}$ which
contains $\generators ^{\mathrm{eff}}(m)$ \eqref{eq.DMeffgenstw}
and is closed under arbitrary (infinite) coproducts.  
Notice that $\DMeff{k} (m)$ is compactly generated with set of generators
$\generators ^{\mathrm{eff}}(m)$ \cite{MR1308405}*{Thm. 2.1(2.1.1)}.

\subsubsection{} \label{sss.eff.cov}
By \cite{MR1308405}*{Thm. 4.1}
the inclusion $i_m:\DMeff{k} (m)\rightarrow DM_k$ admits a right adjoint
$r_m:DM_k \rightarrow \DMeff{k} (m)$ which is also a triangulated functor.
The $(m-1)$-effective cover of the slice filtration is defined to be
$f_m=i_m \circ r_m :DM_k \rightarrow DM_k$ \cite{MR1977582},
\cite{MR2600283}*{p. 18}, \cite{MR2249535}.

\begin{prop}  \label{prop.func.sfil}
With the notation and conditions of  
\eqref{coeffs.not}-\eqref{sssec.changefield}.
Let $L/k$ be any field extension.
Then, for any $m\in \mathbb Z$,
$f_m\circ \phi ^\ast \cong \phi ^\ast \circ f_m$
 \eqref{eq.ext.scalars}, \eqref{sss.eff.cov}.
\end{prop}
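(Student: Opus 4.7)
The plan is to deduce the isomorphism from the characterization of the effective cover by an orthogonal decomposition triangle. For any $E \in \DM{k}$, the counit $f_m E \to E$ of the adjunction $(i_m, r_m)$ extends to a distinguished triangle
\begin{equation*}
	f_m E \to E \to C(E) \to f_m E [1]
\end{equation*}
with $f_m E \in \DMeff{k}(m)$ and the cone $C(E) \in \northogonal{k}{m}$; such a decomposition is unique up to canonical isomorphism. Since $\phi^\ast$ is triangulated, applying it yields a distinguished triangle in $\DM{L}$ with middle term $\phi^\ast E$. To conclude $\phi^\ast f_m E \cong f_m \phi^\ast E$ it suffices to verify the two containments $\phi^\ast f_m E \in \DMeff{L}(m)$ and $\phi^\ast C(E) \in \northogonal{L}{m}$.

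The first containment is the soft step. The functor $\phi^\ast$ is triangulated and, being induced by a left Quillen functor between the underlying model categories, preserves arbitrary small coproducts. Since $\phi^\ast M(X)(r) = M(X_L)(r)$, the compact generating set $\generators^{\mathrm{eff}}(m)$ of $\DMeff{k}(m)$ is sent into the analogous generating set of $\DMeff{L}(m)$. Because $\DMeff{k}(m)$ is the smallest full triangulated subcategory of $\DM{k}$ containing $\generators^{\mathrm{eff}}(m)$ and closed under arbitrary coproducts, we conclude $\phi^\ast(\DMeff{k}(m)) \subseteq \DMeff{L}(m)$.

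The second containment is the heart of the proof. For $F \in \northogonal{k}{m}$, $Y \in Sm_L$, $r \geq m$ and $n \in \mathbb Z$, we must show $\Hom_{\DM{L}}(M(Y)(r)[n], \phi^\ast F) = 0$. Following the same strategy as in the proof of Proposition \ref{torsion.ext.scalars}, write $L = \varinjlim_\alpha L_\alpha$ as a filtered colimit of finitely generated subextensions of $k$ and descend $Y$ to some smooth $Y_\alpha$ over a sufficiently large $L_\alpha$; continuity of Voevodsky's motives along filtered colimits of fields (Cisinski--D\'eglise) then expresses the target as a filtered colimit of Hom groups in $\DM{L_\alpha}$. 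One reduces to the case $L_\alpha = k(V)$ for a smooth irreducible $V \in Sm_k$, where a spreading-out/generic-point argument identifies
\begin{equation*}
	\Hom_{\DM{k(V)}}(M(Y_\alpha)(r)[n], \phi^\ast F) \cong \varinjlim_{\emptyset \neq U \subseteq V} \Hom_{\DM{k}}(M(\widetilde Y_\alpha \times_V U)(r)[n], F),
\end{equation*}
where $\widetilde Y_\alpha$ is a smooth model of $Y_\alpha$ defined over a non-empty open of $V$. Each $\widetilde Y_\alpha \times_V U$ lies in $Sm_k$ and $r \geq m$, so every term in the colimit vanishes by the assumption $F \in \northogonal{k}{m}$.

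The main obstacle is this third step, i.e.\ showing that $\phi^\ast$ carries $\northogonal{k}{m}$ into $\northogonal{L}{m}$ for an \emph{arbitrary} field extension $L/k$. The continuity of Voevodsky's motives along filtered colimits of base fields together with the generic-point/spreading-out identification (both standard consequences of the six-functor formalism of Cisinski--D\'eglise, and already used implicitly in the proof of \ref{torsion.ext.scalars}) supply the necessary tools, but they must be assembled carefully, especially to justify the descent of $Y$ to a smooth model over a finitely generated subextension of $L/k$.
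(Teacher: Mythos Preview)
Your argument is correct and is, in substance, what lies behind the paper's citations. The paper's proof is a one-liner invoking \cite{MR2600283}*{Lem.~5.9}, \cite{MR2249535}*{Prop.~1.1}, \cite{MR3590347}*{Thm.~4.12, Thm.~5.1}, and \cite{MR3034283}*{2.12--2.13}; in particular \cite{MR3034283}*{2.11--2.13} packages exactly the criterion you spell out (a triangulated, coproduct-preserving functor that sends the generators $\generators^{\mathrm{eff}}(m)$ into $\generators^{\mathrm{eff}}(m)$ and preserves the right-orthogonal commutes with $f_m$), while \cite{MR3590347} supplies the continuity and base-change input you need for the third step. So you are not taking a different route so much as unpacking the references.

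One remark on the point you yourself flag as delicate: when $k$ is non-perfect, a finitely generated subextension $L_\alpha/k$ need not be of the form $k(V)$ with $V\in Sm_k$. The paper's standing convention \eqref{coeffs.not} forces $\tfrac{1}{p}\in R$, and \cite{MR3590347}*{Cor.~4.13} then lets one pass to the perfect closure (where $\phi^\ast$ is an equivalence) before running your spreading-out argument; alternatively one uses the de Jong--Gabber alteration input from \cite{MR3590347}*{Thm.~4.12} directly. Either way the issue is handled by the very references the paper cites, so your outline stands.
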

\begin{proof}
The result follows by
combining \cite{MR2600283}*{Lem. 5.9}, \cite{MR2249535}*{Prop. 1.1}
with \cite{MR3590347}*{Thm. 4.12 and Thm. 5.1}.  See also
\cite{MR3034283}*{2.12-2.13}.
\end{proof}

\begin{prop}  \label{prop.perfclosure}
With the notation and conditions of  \eqref{coeffs.not}-\eqref{sssec.changefield}.
Let $k$ be a non-perfect field and $L$ its perfect closure.
Let $h:E\rightarrow F$ be a map  in $DM_k$ and $m\in \mathbb Z$.
Consider the base change functor $\phi ^\ast: DM_k \rightarrow DM_L$
 \eqref{eq.ext.scalars}.
Then, $f_m(h)=0$ in $DM_k$ if and only if $f_m(\phi ^\ast h)=0$ in $DM_L$.
\end{prop}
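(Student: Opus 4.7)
The plan is to use the two propositions just proved as black boxes: Proposition \ref{prop.func.sfil} to commute $f_m$ past the base change $\phi^\ast$, and Proposition \ref{torsion.ext.scalars} to control the kernel of $\phi^\ast$ on Hom groups, invoking the coefficient convention \eqref{coeffs.not} at the end.

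The forward implication is essentially formal. Assuming $f_m(h)=0$ in $DM_k$, apply the triangulated functor $\phi^\ast$ to obtain $\phi^\ast(f_m(h))=0$ in $DM_L$. Then by Proposition \ref{prop.func.sfil} the natural isomorphism $f_m \circ \phi^\ast \cong \phi^\ast \circ f_m$ identifies this with $f_m(\phi^\ast h)$, so the latter vanishes.

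For the reverse implication, start from $f_m(\phi^\ast h)=0$ and again apply Proposition \ref{prop.func.sfil} to rewrite this as $\phi^\ast(f_m(h))=0$ in $DM_L$. Since $f_m(h)\in \Hom_{DM_k}(f_m E, f_m F)$ lies in the kernel of the base change map on Hom groups, Proposition \ref{torsion.ext.scalars} tells us that $f_m(h)$ is torsion. Because $L$ is the perfect closure of $k$, the extension $L/k$ is purely inseparable, so (via the usual trace/pushforward argument showing that $\phi_\ast\phi^\ast$ acts by a power of $p$) this torsion is in fact $p$-primary. Under the standing convention in \eqref{coeffs.not}, the ring of coefficients $R$ has $1/p\in R$ since $k$ is non-perfect of characteristic $p$, so all $p$-primary torsion in the Hom group vanishes, forcing $f_m(h)=0$.

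The main, and essentially only, obstacle is identifying the torsion in Proposition \ref{torsion.ext.scalars} as $p$-primary in the case of the perfect closure; this is standard for purely inseparable extensions but must be made explicit in order to combine cleanly with the $1/p\in R$ hypothesis. Once that is observed, the proof reduces to the short commutative-diagram chase outlined above.
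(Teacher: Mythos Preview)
Your argument is correct in outline but takes a longer route than the paper. The paper's proof is a single observation: by \cite{MR3590347}*{Cor.~4.13}, the functor $\phi^\ast: DM_k\to DM_L$ is an \emph{equivalence of categories} when $L$ is the perfect closure of $k$, so combining this with $f_m\circ\phi^\ast\cong\phi^\ast\circ f_m$ from Proposition~\ref{prop.func.sfil} gives both directions at once---no torsion analysis is needed.

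You instead invoke only the weaker Proposition~\ref{torsion.ext.scalars} and then work to show the torsion is $p$-primary and killed by \eqref{coeffs.not}. This detour is unnecessary: the categorical equivalence is already available, and in fact the proof of Proposition~\ref{torsion.ext.scalars} itself cites \cite{MR3590347}*{Cor.~4.13} to pass to the perfect case, so you are relying on it implicitly anyway. There is also a small gap in your last step: concluding that $1/p\in R$ kills the $p$-primary torsion in $\Hom_{DM_k}(f_mE,f_mF)$ requires that Hom group to be a $\mathbb Z[1/p]$-module, which does not follow for arbitrary $E,F\in DM_k$ from a hypothesis on the coefficient ring $R$ alone. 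The paper's direct appeal to the equivalence of categories sidesteps this issue entirely.
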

\begin{proof}
We observe that $\phi ^\ast$ is an equivalence of categories
\cite{MR3590347}*{Cor. 4.13}.  Thus it suffices to see that
$f_m\circ \phi ^\ast \cong \phi ^\ast \circ f_m$, which follows from
\eqref{prop.func.sfil}.
\end{proof}

\subsection{}  \label{subsubsec.cancel}

With the notation and conditions of  \eqref{coeffs.not}.
By Voevodsky's cancellation theorem 
\cite{MR2804268}*{Cor. 4.10} (combined with 
\cite{MR3590347}*{Cor. 4.13, Thm. 4.12 and Thm. 5.1} in case 
$k$ is non-perfect), 
the suspension functor 
$\Sigma ^{\infty}:\DMeff{k} \rightarrow DM_k$ induces an equivalence of 
categories between 
$\DMeff{k}$ and $\DMeff{k} (0)$
\eqref{subsubsec.neffDM}.
We will abuse notation and write $\DMeff{k}$ for 
$\DMeff{k} (0)$. 

\subsubsection{Motivic cohomology}  \label{sssec.motcoh}
Let $X\in Sm_k$, and $r$, $s\in \mathbb Z$.
We will write $H^{r,s}(X,R)$ for 
$\Hom _{DM_k}(M(X),\sphere _R (s)[r])$,
i.e. for the motivic cohomology of $X$ with coefficients in $R$ of degree $r$ and
weight $s$.

Combining \eqref{subsubsec.cancel} and \cite{MR1883180} we conclude
that  there are natural
isomorphisms:
\begin{align} \label{eq.Chow.equal.mot}
\Hom_{DM_k}(M(X)(-s)[-r], \sphere _R) \xrightarrow[\cong]{A}
H^{r,s}(X,R) \xrightarrow[\cong]{V} CH^s(X, 2s-r)_R,
\end{align}
where $A(\alpha)= \alpha (s)[r]$ and the groups
on the right are Bloch's higher Chow groups \cite{MR0852815}.

\subsubsection{Lieberman's lemma \cite{MR0382267}*{p. 73}, 
\cite{MR3052734}*{Lem. 2.1.3}} \label{sssec.Lieblem}
Let $X \in SmProj_k$ of dimension $d$, 
$Y\in Sm_k$ and $a$, $b$, $r$, $s\in \mathbb Z$.
We will write $\pi _X:X\times Y\rightarrow X$, $\pi _Y:X\times Y \rightarrow Y$
for the projections.

Consider the following composable maps in $DM_k$,
$\alpha : M(X)(-s)[-r]\rightarrow \sphere _R$,
$\beta :M(Y)(-b)[-a]\rightarrow M(X)(-s)[-r]$.  Let
$\alpha ^A =\alpha (s)[r]\in H^{r,s}(X,R)$ \eqref{eq.Chow.equal.mot} and
$\beta ^P \in H^{2d+a-r,d+b-s}(X\times Y, R)$ be the image of $\beta$
under the isomorphism induced by dualizing $M(X)$ 
\cite{MR1764202}*{Thm. 4.3.7},
\cite{MR2399083}*{Prop. 6.7.1 and \S 6.7.3}:
\begin{align*}
\Hom_{DM_k}&(M(Y)(-b)[-a],M(X)(-s)[-r])\\ &\xrightarrow[\cong]{}
\Hom_{DM_k}(M(X\times Y), \sphere_R(d+b-s)[2d+a-r])
\end{align*}

\begin{prop}  \label{prop.Lieb-lem}
With the notation and conditions of 
\eqref{coeffs.not} and
\eqref{eq.Chow.equal.mot}-\eqref{sssec.Lieblem}.
Then:
\begin{align}
\pi _{Y\ast}((\pi _X ^\ast \alpha ^A) \cdot \beta ^P) &=
(\alpha \circ \beta)(b)[a]\;
&\mathrm{ in }\; &H^{a,b}(Y,R) \label{prop.Lieb-lem.a}\\
\pi _{Y\ast}((\pi _X ^\ast V(\alpha ^A)) \cdot V(\beta ^P)) &=
V((\alpha \circ \beta)(b)[a])\;
&\mathrm{ in }\; &CH^{b}(Y,2b-a)_R \label{prop.Lieb-lem.b}
\end{align}
where $\cdot$ is the product on motivic cohomology (resp. the higher Chow groups)
and $\pi _X ^\ast$, $\pi _{Y\ast}$ are the pull-back and push-forward
in motivic cohomology (resp. the higher Chow groups).
\end{prop}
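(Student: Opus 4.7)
The plan is to derive this from the strong duality $M(X)^\vee \cong M(X)(-d)[-2d]$, valid for $X$ smooth projective of dimension $d$ in Voevodsky's category $DM_k$ by \cite{MR1764202}*{Thm. 4.3.7} and \cite{MR2399083}*{Prop. 6.7.1 and \S 6.7.3}, together with the K\"unneth isomorphism $M(X \times Y) \cong M(X) \otimes M(Y)$. The statement is the motivic-cohomology analogue of the classical Lieberman lemma (\cite{MR0382267}*{p. 73}, \cite{MR3052734}*{Lem. 2.1.3}), and in the end it is a purely formal consequence of the symmetric monoidal structure on $DM_k$ and the triangle identities for this duality; no further input is needed.

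First I would unpack the isomorphism defining $\beta^P$. Under the tensor-hom adjunction $\Hom(A, B \otimes M(X)) \cong \Hom(A \otimes M(X)^\vee, B)$, the map $\beta : M(Y)(-b)[-a] \to M(X)(-s)[-r]$, after twisting by $(b)[a]$, corresponds to the composition
\[
\beta^P: M(X) \otimes M(Y) \xrightarrow{\mathrm{id} \otimes \beta(b)[a]} M(X) \otimes M(X)(b-s)[a-r] \xrightarrow{\mathrm{ev}} \sphere_R(d+b-s)[2d+a-r],
\]
where $\mathrm{ev}: M(X) \otimes M(X)(-d)[-2d] \to \sphere_R$ is the evaluation of the duality, tensored and twisted appropriately; the bidegrees match those in the statement.

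Next I would rewrite the three operations on the left-hand side of the first identity in tensor-theoretic terms. The pull-back $\pi_X^\ast \alpha^A$ is $\alpha^A \circ (\mathrm{id}_{M(X)} \otimes \epsilon_Y)$, where $\epsilon_Y: M(Y) \to \sphere$ is the map induced by the structure morphism $Y \to \spec{k}$; the cup product $(\pi_X^\ast \alpha^A) \cdot \beta^P$ is the tensor product of the two factors precomposed with the diagonal $\Delta_{X \times Y}: M(X\times Y) \to M(X\times Y) \otimes M(X\times Y)$; and the push-forward $\pi_{Y\ast}$ along the smooth projective projection $\pi_Y$ is precomposition with $\mathrm{coev} \otimes \mathrm{id}_{M(Y)}$, where $\mathrm{coev}: \sphere_R \to M(X)(-d)[-2d] \otimes M(X)$ is the coevaluation of the duality. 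Substituting these expressions and invoking naturality of the diagonal together with the triangle identity for the dualizable object $M(X)$ collapses the zigzag $\mathrm{ev} \circ \mathrm{coev}$; what remains is precisely $\alpha \circ \beta$, twisted by $(b)[a]$, as claimed.

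The second identity follows at once from the first, since the natural isomorphism $V$ in \eqref{eq.Chow.equal.mot} intertwines pull-backs, push-forwards and products on motivic cohomology and higher Chow groups, so applying $V$ to both sides of the first identity yields the second. The main obstacle is bookkeeping: one must track the Tate twists $(\cdot)$ and suspensions $[\cdot]$ consistently through the duality isomorphism, and ensure that the orientation conventions fixing $\mathrm{ev}$ and $\mathrm{coev}$ in $DM_k$ agree with those implicit in the normalisation of $\beta^P$. Once the conventions are pinned down, the verification reduces to a standard diagrammatic computation in a symmetric monoidal category with a dualizable object.
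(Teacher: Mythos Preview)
Your approach is correct and is essentially the one the paper takes: for \eqref{prop.Lieb-lem.a} the paper simply cites the formal symmetric-monoidal argument in \cite{MR1623774}*{Ch.~IV, Lem.~2.1.2}, which is exactly the duality/triangle-identity computation you sketch; for \eqref{prop.Lieb-lem.b} the paper likewise reduces to \eqref{prop.Lieb-lem.a} via the compatibility of $V$ with pull-back, push-forward and product. The only point to tighten is that this compatibility of $V$ is not automatic from the definition in \eqref{eq.Chow.equal.mot} and requires input---the paper invokes \cite{MR1743246}*{Thm.~0.2} and \cite{MR2738368}*{Thm.~3.1} here---so you should supply a reference rather than assert it.
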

\begin{proof}
On the one hand,
\eqref{prop.Lieb-lem.a} follows directly from the formal argument in
\cite{MR1623774}*{Ch. IV, Lem. 2.1.2}.  Notice that in contrast to Voevodsky's construction, the construction
in \cite{MR1623774} is contravariant.

On the other hand,
\eqref{prop.Lieb-lem.b} follows by combining \eqref{prop.Lieb-lem.a} with
\cite{MR1743246}*{Thm. 0.2} and \cite{MR2738368}*{Thm. 3.1}.
\end{proof}

\begin{prop}  \label{prop.tenscoef}
Let $Y\in Sm_k$ and $r$, $s\in \mathbb Z$.  Assume further
that $R=\mathbb Z [\tfrac{1}{p}]$ or $R=\mathbb Q$ \eqref{coeffs.not}.
Then,
\begin{align*}
\Hom _{\DM{k}}(M(Y)(s)[r], E_R)\cong
\Hom _{\DM{k}}(M(Y)(s)[r], E)\otimes _{\mathbb Z} R.
\end{align*}
\end{prop}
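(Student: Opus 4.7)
The plan is to present $\sphere_R$ as a sequential homotopy colimit of copies of $\sphere$ in $DM_k$, and then to invoke compactness of $M(Y)(s)[r]$ so that $\Hom_{DM_k}(M(Y)(s)[r],-)$ commutes with this colimit.

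First I would write $R$ as a filtered colimit of copies of $\mathbb Z$ in abelian groups: for $R=\mathbb Z[\tfrac{1}{p}]$, take $\colim(\mathbb Z\xrightarrow{p}\mathbb Z\xrightarrow{p}\cdots)$; for $R=\mathbb Q$, take the analogous colimit indexed by positive integers ordered by divisibility, e.g.\ $\colim(\mathbb Z\xrightarrow{2}\mathbb Z\xrightarrow{3}\mathbb Z\xrightarrow{4}\cdots)$. This presentation lifts to the level of Nisnevich sheaves of $R$-modules with transfers and hence, via the model category $Spt_T(Shv^{tr})$, to an isomorphism in $DM_k$
\begin{align*}
\sphere_R \;\cong\; \hocolim\bigl(\sphere\xrightarrow{p}\sphere\xrightarrow{p}\cdots\bigr)
\end{align*}
(and the analogous diagram for $\mathbb Q$). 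Tensoring with $E$ preserves this homotopy colimit since the tensor product on $DM_k$ admits a right adjoint, so one obtains $E_R = E\otimes \sphere_R \cong \hocolim\bigl(E\xrightarrow{p} E\xrightarrow{p}\cdots\bigr)$.

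Next I would invoke the fact that $M(Y)(s)[r]$ is a compact object of $DM_k$: this follows from \eqref{eq.DMgens}, since shifts of objects in the set of compact generators $\generators_{DM_k}$ are themselves compact. In any compactly generated triangulated category, $\Hom$ out of a compact object converts sequential homotopy colimits into sequential colimits of abelian groups. Applying this to the presentation of $E_R$ yields
\begin{align*}
\Hom_{DM_k}(M(Y)(s)[r],E_R) \;\cong\; \colim\bigl(\Hom_{DM_k}(M(Y)(s)[r],E)\xrightarrow{p}\cdots\bigr),
\end{align*}
and the right-hand side is by construction $\Hom_{DM_k}(M(Y)(s)[r],E)\otimes_{\mathbb Z}R$.

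The main obstacle is establishing the homotopy colimit presentation of $\sphere_R$ and verifying that the transition maps really do induce multiplication by the correct integers on $\Hom$-groups. This is essentially bookkeeping inside $Spt_T(Shv^{tr})$, but it must be done with some care; the required compatibility can ultimately be read off from the representability statement \eqref{eq.Chow.equal.mot} applied functorially in the coefficient ring, together with the fact that multiplication by $n$ on $\sphere$ corresponds to multiplication by $n$ on $H^{0,0}(\spec k,\mathbb Z)\cong\mathbb Z$. Once this identification is in place, the rest of the argument is formal.
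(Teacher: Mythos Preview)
Your proposal is correct and follows essentially the same approach as the paper: both arguments present $E_R$ as a sequential homotopy colimit of copies of $E$ along multiplication-by-integer maps, invoke compactness of $M(Y)(s)[r]$ in $DM_k$, and conclude by commuting $\Hom$ with the colimit. The only cosmetic difference is that the paper writes the hocolim presentation directly for $E_R$ (citing \cite{MR1308405}*{Lem.~2.8} for the compactness step) rather than first doing it for $\sphere_R$ and then tensoring, and it does not dwell on your ``main obstacle,'' treating the identification of the transition maps as standard.
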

\begin{proof}
We observe that $M(Y)(s)[r]$ is compact in $\DM{k}$
\cite{MR2438151}*{Thm. 4.5.67}.  Then the result follows from
\cite{MR1308405}*{Lem. 2.8}, since $E_R$ is given by
the homotopy colimit \cite{MR1812507}*{1.6.4} 
of the following diagrams in
$\DM{k}$:
\begin{align*}
\xymatrix{E\otimes \mathbb Z [\frac{1}{p}]\cong  \hocolim (
E \ar[r]^-{p\cdot id_E}& E \ar[r]^-{p^2\cdot id_E}& \cdots \ar[r]&
E \ar[r]^-{p^n \cdot id_E} &
E \ar[r]&\cdots ) \\
E\otimes \mathbb Q \cong \hocolim (
E \ar[r]^-{2\cdot id_E}& E \ar[r]^-{3\cdot id_E}& \cdots \ar[r]&
E \ar[r]^-{n \cdot id_E} &
E \ar[r]&\cdots )}
\end{align*}
\end{proof}

\begin{prop}  \label{prop.vanchancoef}
With the notation and conditions of \eqref{coeffs.not}.
Let $h:E\rightarrow F_R$ be a map in $\DM{k}$.  Assume further that
$R=\mathbb Z [\tfrac{1}{p}]$ or $R=\mathbb Q$.  Then, $h=0$ in
$\DM{k}$ if and only if 
$0=h\otimes R : E\otimes R \rightarrow F_R \otimes R$ in $\DM{k}$.
\end{prop}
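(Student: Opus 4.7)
The forward implication is immediate: tensoring the zero map with $\sphere_R$ yields zero. For the converse, the plan is to exploit the idempotence of the ring object $\sphere_R$ in $DM_k$. More precisely, the key claim is that under the hypothesis $R\otimes_{\mathbb{Z}} R\cong R$, the multiplication $m:\sphere_R\otimes\sphere_R\to\sphere_R$ is an isomorphism in $DM_k$. Granting this, the map $\mathrm{id}_{\sphere_R}\otimes u:\sphere_R\to\sphere_R\otimes\sphere_R$ (with $u:\sphere\to\sphere_R$ the unit) is a one-sided inverse to $m$ by the right unit axiom, hence inverse to $m$; tensoring with $F$ shows that $\eta_{F_R}:=\mathrm{id}_{F_R}\otimes u:F_R\to F_R\otimes\sphere_R$ is also an isomorphism for any $F\in DM_k$.

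To establish the claim, I would reuse the homotopy colimit description employed in the proof of \eqref{prop.tenscoef}: for $R=\mathbb{Z}[\tfrac{1}{p}]$ one has $\sphere_R\cong\hocolim_n(\sphere\xrightarrow{p^n\cdot\mathrm{id}}\sphere)$, and analogously for $R=\mathbb{Q}$ with transitions given by multiplication by successive positive integers. Tensoring this presentation with $\sphere_R$ yields $\sphere_R\otimes\sphere_R\cong\hocolim_n(\sphere_R\xrightarrow{p^n\cdot\mathrm{id}}\sphere_R)$; since multiplication by $p$ (resp.\ by any positive integer) is already invertible on $\sphere_R$, every transition map is an isomorphism, so the hocolim collapses back to $\sphere_R$, and a direct check using the universal property identifies the resulting comparison with $m$.

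With these facts in hand, the proposition follows from the naturality square
\[
\xymatrix{
E\ar[r]^-{h}\ar[d]_-{\eta_E} & F_R\ar[d]^-{\eta_{F_R}}_-{\cong} \\
E\otimes\sphere_R\ar[r]^-{h\otimes R} & F_R\otimes\sphere_R,
}
\]
whose commutativity is just the interchange law (both composites equal $h\otimes u:E\otimes\sphere\to F_R\otimes\sphere_R$). If $h\otimes R=0$, the diagram forces $\eta_{F_R}\circ h=0$, and invertibility of $\eta_{F_R}$ then yields $h=0$. The main bookkeeping obstacle is in the second paragraph, namely verifying that the comparison map arising from the collapsed hocolim genuinely coincides with the ring multiplication $m$; this is a routine consequence of the universal property of the homotopy colimit and the compatibility of $m$ with the tower, but it is the one step that requires some care rather than formal manipulation.
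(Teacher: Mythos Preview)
Your proof is correct and follows essentially the same line as the paper's: both use the naturality square for $h$ with respect to the unit map $u_R:\sphere\to\sphere_R$, together with the fact that $F_R\otimes u_R:F_R\to F_R\otimes\sphere_R$ is an isomorphism. The paper deduces this isomorphism by citing \eqref{prop.tenscoef} (checking on compact generators), whereas you argue it via the hocolim presentation directly; your self-flagged ``bookkeeping obstacle'' of identifying the collapsed comparison with $m$ can in fact be sidestepped, since once $m$ is known to be an isomorphism the right-unit identity $m\circ(\mathrm{id}\otimes u)=\mathrm{id}$ already forces $\mathrm{id}\otimes u$ to be invertible.
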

\begin{proof}
If $h=0$, then it's immediate that $h\otimes R=0$, since
$-\otimes R: \DM{k}\rightarrow \DM{k}$, $E\mapsto E\otimes R$
is a triangulated functor.

On the other hand, if $h\otimes R =0$, then we deduce that
$h=0$ from the following commutative diagram in $\DM{k}$:
\begin{align*}
\xymatrix{ E\cong E\otimes \sphere \ar[rr]^-{E\otimes u_R} \ar[d]_-{h} && 
E\otimes R \ar[d]^-{h\otimes R} \\
F\otimes R \cong (F\otimes R)\otimes \sphere
 \ar[rr]^-{F_R\otimes u_R}_-{\cong} && (F\otimes R)\otimes R}
\end{align*}
where the bottom isomorphism follows from \eqref{prop.tenscoef},
and $u_R:\sphere \rightarrow R$ is the unit map. 
\end{proof}

\begin{prop}  \label{prop.slicechangcof}
With the notation and conditions of \eqref{coeffs.not}.
Let $E\in \DM{k}$ and $m\in \mathbb Z$.
Assume further that $R=\mathbb Z [\tfrac{1}{p}]$ or $R=\mathbb Q$.
Then, $f_m(E)_R\cong f_m(E_R)$ in $\DM{k}$.
\end{prop}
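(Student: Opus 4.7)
The plan is to exploit the explicit homotopy colimit description of $E_R$ used in the proof of \eqref{prop.tenscoef} and to commute the functor $f_m$ past this colimit. The entire argument reduces to the assertion that $f_m$ preserves countable coproducts, since the rest is formal manipulation with a triangulated functor.

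First I would use that for $R=\mathbb Z[\tfrac{1}{p}]$ or $R=\mathbb Q$, one has in $\DM{k}$ presentations
\begin{align*}
E_R \cong \hocolim\bigl(E \xrightarrow{n_1\cdot id_E} E \xrightarrow{n_2\cdot id_E} \cdots\bigr), \qquad
f_m(E)_R \cong \hocolim\bigl(f_m(E) \xrightarrow{n_1\cdot id} f_m(E) \to \cdots\bigr),
\end{align*}
for the same sequence of integers $n_i$ recorded in the proof of \eqref{prop.tenscoef}.  Thus it suffices to exhibit a natural isomorphism $f_m(E_R)\cong \hocolim(f_m(E)\to f_m(E)\to\cdots)$, i.e.\ to show that $f_m$ commutes with this telescope.

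The central step is then verifying that $f_m = i_m\circ r_m$ preserves arbitrary coproducts, from which the preservation of sequential homotopy colimits is automatic because $f_m$ is triangulated \eqref{sss.eff.cov} and the telescope is the cone on a map between countable coproducts in the sense of \cite{MR1812507}*{1.6.4}. The functor $i_m$, being a left adjoint, preserves all colimits.  For $r_m$, I would invoke the standard result of Neeman \cite{MR1308405}: because the generators $\generators ^{\mathrm{eff}}(m)$ of $\DMeff{k}(m)$ (see \eqref{subsubsec.neffDM}) are already contained in the compact generators $\generators _{DM_k}$ of $\DM{k}$, the inclusion $i_m$ sends compact objects to compact objects, and consequently its right adjoint $r_m$ commutes with arbitrary coproducts.

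The only point requiring real attention is precisely this coproduct preservation of $r_m$, which however is immediate from the compact generation of the slice filtration already recorded in \eqref{subsubsec.neffDM}.  Once this is granted, combining the two displayed telescope presentations yields the desired isomorphism $f_m(E)_R\cong f_m(E_R)$.
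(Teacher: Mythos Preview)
Your argument is correct. The key point—that $f_m=i_m\circ r_m$ preserves coproducts because $i_m$ sends the compact generators $\generators^{\mathrm{eff}}(m)$ to compact objects of $\DM{k}$, whence $r_m$ preserves coproducts by Neeman's criterion—is exactly what is needed to commute $f_m$ past the telescope describing $E_R$.

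The paper's proof proceeds differently in presentation: it observes that, by \eqref{prop.tenscoef}, the triangulated functor $-\otimes R$ satisfies an abstract compatibility condition recorded in \cite{MR3034283}*{2.11}, and then invokes the general machinery \cite{MR3034283}*{2.12--2.13} asserting that any functor satisfying that condition commutes with $f_m$. Your route unwinds this machinery by hand for the specific functor $-\otimes R$, using the explicit homotopy colimit model. The underlying content is the same—the cited results in \cite{MR3034283} ultimately rest on the same coproduct-preservation of $f_m$ you prove directly—but your version is self-contained and does not appeal to the external reference, at the cost of being longer. The paper's version is terser but outsources the real work.
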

\begin{proof}
By \eqref{prop.tenscoef}, we notice that the triangulated functor
$-\otimes R:\DM{k}\rightarrow \DM{k}$ satisfies the
condition \cite{MR3034283}*{2.11}, so the result follows by
\cite{MR3034283}*{2.13 and 2.12}.
\end{proof}

\subsection{The orthogonal filtration}   \label{ss.orthfil}
Let $\northogonal{k}{m}$, $m\in \mathbb Z$ be the full
subcategory of $DM_k$ which consists of objects $E\in DM_k$
such that for every $G\in \DMeff{k}(m)$
\eqref{subsubsec.neffDM}: $\Hom _{DM_k}(G,E)=0$.

\subsubsection{}
The orthogonal filtration \cite{MR3614974}*{3.2.1}
(called birational in \cite{MR3614974}*{3.2})
is the following tower of  triangulated subcategories of $DM_k$:
\begin{align}  \label{eq.orthogonal.filtration}
\cdots \subseteq \northogonal{k}{m-1}\subseteq \northogonal{k}{m} \subseteq 
\northogonal{k}{m+1} \subseteq \cdots
\end{align}

\subsubsection{}  \label{def.orthogonal.adj}
We observe that the  inclusion $j_m:\northogonal{k}{m}\rightarrow DM_k$ admits
a right adjoint $p_m: DM_k \rightarrow \northogonal{k}{m}$ which is also
a triangulated functor \cite{MR1308405}*{Thm. 4.1},
\cite{MR3614974}*{3.2.2}.  Let $bc_{\leq m}=j_{m+1}\circ p_{m+1}:DM_k \rightarrow
DM_k$,
where $bc$ stands for birational cover \cite{MR3614974}*{3.2}.
The counit of the adjunction, $\theta _m :bc_{\leq m} \rightarrow id$,
 satisfies a universal property \cite{MR3614974}*{3.2.4},
which together with the inclusions \eqref{eq.orthogonal.filtration} induces
a canonical natural transformation $bc_{\leq m}\rightarrow
bc_{\leq m+1}$.

\subsubsection{} 
For $\sphere _R \in DM_k$ \eqref{coeffs.not} and 
$m\geq 0$, the counit
$\theta _m ^{\sphere _R}:bc_{\leq m}\sphere _R \rightarrow \sphere _R$
is an isomorphism  \cite{MR3614974}*{5.1.2}.  
Thus, we obtain the following tower
in $DM_k$:
\begin{align} \label{eq.orth.tower.unit}
\begin{split}
\xymatrix{\cdots \ar[r] & bc_{\leq -m}\sphere _R \ar[r] \ar[drrr] |-{\theta _{-m}^{\sphere _R}}
& \cdots \ar[r]
& bc_{\leq -2}\sphere _R
\ar[r] \ar[dr]|-{\theta _{-2}^{\sphere _R}}& 
bc_{\leq -1}\sphere _R \ar[d] |-{\theta _{-1}^{\sphere _R}}\\
&&&& \sphere_R}
\end{split}
\end{align}

\begin{defi} \label{def.orth.fil.Chow}
Let $X\in SmProj_k$ of dimension $d$, $1\leq n\leq d$,
and with
the notation and conditions of  \eqref{coeffs.not}.
The \emph{orthogonal filtration} on $CH^{n}(X)_R$ 
\cite{MR3614974}*{5.2.1}
is the decreasing filtration $F^\bullet$, where
$F^mCH^{n}(X)_R$, $m\geq 0$ is the image
of $\theta _{-m} ^{\sphere _R}$ \eqref{eq.orth.tower.unit}:
\begin{align*}
\xymatrix{\Hom_{DM_k}(M(X)(-n)[-2n], bc_{\leq -m}\sphere _R)
\ar[d]_-{\theta _{-m \ast}^{\sphere _R}}\\
\Hom_{DM_k}(M(X)(-n)[-2n], \sphere _R)\cong CH^n(X)_R
}
\end{align*}
where the bottom isomorphism is given by \eqref{eq.Chow.equal.mot}.
\end{defi}

\subsubsection{}  \label{sss.main.not}
With the notation and conditions of 
\eqref{coeffs.not}.
Let $\alpha \in CH^n (X)_R$, $X\in SmProj_k$.
We will abuse notation and also write
$\alpha: M(X)\rightarrow \sphere _R (n)[2n]$ for the map that
corresponds to
$\alpha \in CH^n(X)_R$ under 
the isomorphism $V$ in \eqref{eq.Chow.equal.mot}, 
\eqref{sssec.motcoh}.

\begin{rmk}  \label{rmk.orth.crit}
With the notation and conditions of 
\eqref{def.orth.fil.Chow}-\eqref{sss.main.not}.
Then, 
$\alpha \in F^mCH^{n}(X)_R$
if and only if $f_{-m+1}(\alpha (-n)[-2n])=0$ \eqref{sss.eff.cov}.

In effect, this follows directly from the first part of \cite{MR3614974}*{5.3.3}
and the universal property of the counit 
$\epsilon _{-m+1}^{\sphere _R}:f_{-m+1}\sphere _R\rightarrow \sphere _R$
\cite{MR3614974}*{3.3.1}.
\end{rmk}

\subsubsection{}
The orthogonal filtration on the Chow groups \eqref{def.orth.fil.Chow}
satisfies several of the properties of the still
conjectural Bloch-Beilinson-Murre filtration 
\cite{MR3614974}*{6.1.4}.  Strictly speaking, \cite{MR3614974}*{6.1.4}
is only stated for perfect base fields, but by \eqref{prop.nonperf.comp} the result
holds as well for non-perfect base fields  if we consider
$\mathbb Z [\frac{1}{p}]$-coefficients \eqref{coeffs.not},
 since \cite{MR3614974}*{6.1.1($\mathbf{BBM2}$)}
is the only property in \cite{MR3614974}*{6.1.4} that does not follow
from the construction.

\begin{lem}  \label{lem.nonperf-perf.comp}
With the notation and conditions of 
\eqref{def.orth.fil.Chow}-\eqref{sss.main.not}.
Let $k$ be a non-perfect base field and $L$ its perfect closure.
Then, $\alpha \in F^m CH^n (X)_R$ 
\eqref{def.orth.fil.Chow} if and only if
$\alpha _L \in F^m CH^n (X_L)_R$.
\end{lem}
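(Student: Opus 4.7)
The plan is to translate the filtration condition into vanishing of an effective-cover map via Remark \ref{rmk.orth.crit}, and then invoke Proposition \ref{prop.perfclosure} to descend/ascend this vanishing between $k$ and its perfect closure $L$.

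First, by Remark \ref{rmk.orth.crit}, $\alpha\in F^m CH^n(X)_R$ is equivalent to $f_{-m+1}(\alpha(-n)[-2n])=0$ in $DM_k$, where $\alpha$ is viewed via \ref{sss.main.not} as a map $M(X)\to \sphere_R(n)[2n]$. Similarly, $\alpha_L\in F^m CH^n(X_L)_R$ is equivalent to $f_{-m+1}(\alpha_L(-n)[-2n])=0$ in $DM_L$, where now $\alpha_L$ is the corresponding map $M(X_L)\to \sphere_R(n)[2n]$ in $DM_L$.

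Next, I would verify that under the base change functor $\phi^\ast:DM_k\to DM_L$ of \eqref{eq.ext.scalars}, the map $\phi^\ast(\alpha(-n)[-2n])$ agrees with $\alpha_L(-n)[-2n]$. This is a compatibility between the isomorphism \eqref{eq.Chow.equal.mot} and pullback along $\spec{L}\to\spec{k}$: since $\phi^\ast$ is a symmetric monoidal triangulated functor with $\phi^\ast M(X)=M(X_L)$ and $\phi^\ast\sphere_R=\sphere_R$, it sends the Chow class $\alpha$ (realized as a morphism in $DM_k$) to the usual pullback $\alpha_L$ (realized as a morphism in $DM_L$). Equivalently, this is the well-known functoriality of Bloch's higher Chow groups under base change, transported through the isomorphism \eqref{eq.Chow.equal.mot}.

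Finally, I would apply Proposition \ref{prop.perfclosure} to the map $h=\alpha(-n)[-2n]$ with the integer $-m+1$: it asserts that $f_{-m+1}(h)=0$ in $DM_k$ if and only if $f_{-m+1}(\phi^\ast h)=0$ in $DM_L$. Combining this with the identification $\phi^\ast h=\alpha_L(-n)[-2n]$ from the previous step and the two translations from Remark \ref{rmk.orth.crit} yields the equivalence $\alpha\in F^m CH^n(X)_R \Leftrightarrow \alpha_L\in F^m CH^n(X_L)_R$.

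The only nontrivial check is the compatibility of the pullback $\alpha\mapsto\alpha_L$ on Chow groups with the map-level pullback $\alpha\mapsto\phi^\ast\alpha$ via \eqref{eq.Chow.equal.mot}; the rest is a direct concatenation of the preceding results, with no further obstacle.
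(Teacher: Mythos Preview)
Your proposal is correct and follows the same route as the paper's proof, which simply says the lemma follows by combining Remark~\ref{rmk.orth.crit} with Proposition~\ref{prop.perfclosure}. The only extra content in your write-up is the explicit identification $\phi^\ast(\alpha(-n)[-2n])=\alpha_L(-n)[-2n]$, which is the straightforward compatibility implicit in the paper's one-line argument.
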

\begin{proof}
It follows by combining \eqref{rmk.orth.crit} with
\eqref{prop.perfclosure}.
\end{proof}

\begin{lem}  \label{lem.fieldext.comp}
With the notation and conditions of 
\eqref{def.orth.fil.Chow}-\eqref{sss.main.not}.
Let $L/k$ be any field extension.
Then, $\alpha \in F^m CH^n (X)_{\mathbb Q}$ 
\eqref{def.orth.fil.Chow} if and only if
$\alpha _L \in F^m CH^n (X_L)_{\mathbb Q}$.
\end{lem}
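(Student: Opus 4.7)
The plan is to reformulate both conditions via \eqref{rmk.orth.crit} as the vanishing of a single morphism in $DM_k$, transfer that vanishing along $\phi^\ast$ using \eqref{prop.func.sfil}, and then upgrade the torsion-kernel statement \eqref{torsion.ext.scalars} to an honest injectivity statement using the rational coefficients.

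First I would apply \eqref{rmk.orth.crit} on both sides: $\alpha \in F^m CH^n(X)_{\mathbb Q}$ if and only if the morphism $g:= f_{-m+1}(\alpha(-n)[-2n])$ vanishes in $DM_k$, and analogously $\alpha_L \in F^m CH^n(X_L)_{\mathbb Q}$ iff $f_{-m+1}(\alpha_L(-n)[-2n]) = 0$ in $DM_L$. Since $\phi^\ast$ is triangulated with $\phi^\ast(\alpha(-n)[-2n]) = \alpha_L(-n)[-2n]$, the natural isomorphism $f_{-m+1}\circ \phi^\ast \cong \phi^\ast \circ f_{-m+1}$ from \eqref{prop.func.sfil} identifies $\phi^\ast g$ with $f_{-m+1}(\alpha_L(-n)[-2n])$. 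Hence the lemma reduces to showing that $g=0$ in $DM_k$ if and only if $\phi^\ast g=0$ in $DM_L$.

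The forward direction is immediate since $\phi^\ast$ is a functor. For the converse I would argue as follows. Assume $\phi^\ast g = 0$. Then by \eqref{torsion.ext.scalars}, $g$ lies in the torsion subgroup of $\Hom_{DM_k}(f_{-m+1}M(X)(-n)[-2n],\, f_{-m+1}\sphere_{\mathbb Q})$, so it will suffice to verify that this Hom group is torsion-free, i.e.\ a $\mathbb Q$-vector space. The key observation is that by the homotopy colimit presentation of $\sphere_{\mathbb Q}$ recalled in \eqref{prop.tenscoef}, multiplication by every nonzero integer is an automorphism of $\sphere_{\mathbb Q}$; applying the triangulated (hence additive) functor $f_{-m+1}$ shows the same holds on $f_{-m+1}\sphere_{\mathbb Q}$, and post-composition then endows $\Hom_{DM_k}(-,\, f_{-m+1}\sphere_{\mathbb Q})$ with a $\mathbb Q$-vector space structure. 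Consequently $g = 0$.

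The main obstacle, modest as it is, is the last $\mathbb Q$-linearity check: one must ensure that $f_{-m+1}\sphere_{\mathbb Q}$ retains the $\mathbb Q$-local character of $\sphere_{\mathbb Q}$. This is precisely the point at which rational (as opposed to $\mathbb Z[\tfrac{1}{p}]$-) coefficients are essential, and it explains why the statement of this lemma is narrower than that of \eqref{lem.nonperf-perf.comp}.
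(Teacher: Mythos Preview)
Your proof is correct and follows essentially the same approach as the paper's: both reduce the statement via \eqref{rmk.orth.crit} to the vanishing of $f_{-m+1}(\alpha(-n)[-2n])$, use \eqref{prop.func.sfil} to commute $f_{-m+1}$ past $\phi^\ast$, and invoke \eqref{torsion.ext.scalars} for the nontrivial direction. The paper's proof is terser and leaves the final $\mathbb Q$-linearity step implicit, whereas you spell out explicitly why $\Hom_{DM_k}(-,\,f_{-m+1}\sphere_{\mathbb Q})$ is a $\mathbb Q$-vector space; this is a welcome clarification but not a different argument.
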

\begin{proof}
Consider the base change functor $\phi ^\ast: DM_k \rightarrow DM_L$
 \eqref{eq.ext.scalars}.
We observe that
$f_{-m+1}\circ \phi ^\ast \cong \phi ^\ast \circ f_{-m+1}$ \eqref{prop.func.sfil}.
Then, the result follows
by combining \eqref{rmk.orth.crit} with
\eqref{torsion.ext.scalars}.
\end{proof}

\begin{prop}  \label{prop.nonperf.comp}
Let $k$ be a non-perfect base field, and with
the notation and conditions of  \eqref{def.orth.fil.Chow}.
 Then, $F^1CH^n(X)_R$ \eqref{def.orth.fil.Chow}
is the $R$-submodule of $CH^n (X)_R$ of cycles numerically equivalent to zero.
\end{prop}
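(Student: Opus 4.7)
The plan is to reduce to the perfect base field case, for which the analogous statement is \cite{MR3614974}*{5.3.6}. Let $L$ denote the perfect closure of $k$; since $k$ is non-perfect of characteristic $p$, the extension $L/k$ is purely inseparable and, by \eqref{coeffs.not}, $\tfrac{1}{p}\in R$.

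By \eqref{lem.nonperf-perf.comp}, $\alpha \in F^1 CH^n(X)_R$ if and only if $\alpha _L \in F^1 CH^n(X_L)_R$; and since $L$ is perfect, \cite{MR3614974}*{5.3.6} identifies the latter with the condition that $\alpha _L$ be numerically trivial in $CH^n(X_L)_R$. It therefore suffices to show that $\alpha$ is numerically trivial in $CH^n(X)_R$ if and only if $\alpha _L$ is numerically trivial in $CH^n(X_L)_R$.

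The implication $(\Leftarrow)$ is immediate from the compatibility of intersection product and degree with flat pullback along $\phi: \spec{L}\to \spec{k}$: for $\beta \in CH^{d-n}(X)_R$ one has $\alpha _L \cdot \beta _L = (\alpha \cdot \beta)_L$ and $\deg _L \circ \phi ^\ast = \deg _k$, so $\deg _k(\alpha\cdot\beta) = \deg _L (\alpha _L \cdot \beta _L) = 0$. For the forward implication $(\Rightarrow)$, one writes $L = \colim _i L_i$ as a filtered colimit of finite purely inseparable extensions $L_i/k$, whence $CH^{d-n}(X_L)_R = \colim _i CH^{d-n}(X_{L_i})_R$. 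Given $\beta ' \in CH^{d-n}(X_L)_R$, choose a lift $\beta ' _i \in CH^{d-n}(X_{L_i})_R$ at some stage and denote $\phi _i : X_{L_i}\to X$. The projection formula gives $(\phi _i)_\ast (\alpha _{L_i}\cdot \beta ' _i) = \alpha \cdot (\phi _i)_\ast \beta ' _i$, whose degree over $k$ vanishes by the numerical triviality of $\alpha$. Combined with the identity $\deg _k \circ (\phi _i)_\ast = [L_i:k]\cdot \deg _{L_i}$ on zero-cycles and the invertibility of the $p$-power $[L_i:k]$ in $R$, this yields $\deg _{L_i}(\alpha _{L_i}\cdot \beta ' _i)=0$, so $\deg _L (\alpha _L \cdot \beta ')=0$ follows by passage to the colimit.

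The main technical point is the tracking of degree factors along the inseparable tower, for which the assumption $\tfrac{1}{p}\in R$ is indispensable; modulo this bookkeeping, the proposition reduces formally to the already-established perfect-field case \cite{MR3614974}*{5.3.6} via \eqref{lem.nonperf-perf.comp}.
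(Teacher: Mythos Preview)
Your proof is correct and follows essentially the same approach as the paper: reduce to the perfect closure $L$ via \eqref{lem.nonperf-perf.comp} and invoke \cite{MR3614974}*{5.3.6}, after checking that numerical triviality is preserved and reflected under the base change $k\to L$ because $\tfrac{1}{p}\in R$. The only difference is that the paper states this last equivalence in one line without justification, whereas you spell out the projection-formula and colimit argument explicitly.
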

\begin{proof}
Let $L$ be the perfect closure of $k$.  Since the exponential characteristic
of $k$ is a unit in $R$ \eqref{coeffs.not} we conclude that
$\alpha \in CH^n (X)_R$ is numerically equivalent to zero if and only if
$\alpha _L \in CH^n (X_L)_R$ is numerically equivalent to zero.

Thus, by \cite{MR3614974}*{5.3.6} it suffices to see that $\alpha \in F^1CH^n(X)_R$
if and only if $\alpha _L \in F^1CH^n(X_L)_R$, which follows by 
\eqref{lem.nonperf-perf.comp}.
\end{proof}

\section{The second step of the orthogonal filtration}  \label{sec.mainres}

\subsection{Incidence equivalence}  \label{sec.inc.equiv}

With the notation and conditions of  
\eqref{def.orth.fil.Chow}-\eqref{sss.main.not}.
The vanishing of the following
pairings is central for the rest of the paper:

\subsubsection{}  \label{vanishing.pair1}
For every $Y\in SmProj_k$, and every 
$\beta \in CH^{d-n+1}(X\times Y)_R$:
\begin{align*}
	\pi _{Y\ast}((\pi _X ^\ast \alpha)\cdot \beta)=0 \in CH^1(Y)_R,
\end{align*}
where $\pi_X:X\times Y \rightarrow X$, $\pi _Y:X\times Y\rightarrow Y$
are the projections.

\begin{rmk}  \label{rmk.car.vanpair1}
With the notation and conditions of  
\eqref{sec.inc.equiv}.
\begin{enumerate}
\item  \label{rmk.car.vanpair1.a}
By the projective bundle formula, if $\alpha \in CH^n(X)_R$ 
satisfies \eqref{vanishing.pair1} then $\alpha$ is
numerically equivalent to zero.

\item \label{rmk.car.vanpair1.b}
If $\alpha \in CH^1(X)_R$ satisfies \eqref{vanishing.pair1},
then $\alpha =0$.

In effect, consider in \eqref{vanishing.pair1}:
$Y=X$ and $\beta =\Delta _X \in CH^d(X\times X)_R$
the class of the diagonal.

\item \label{rmk.car.vanpair1.c}
Assume further that the base field $k$ is algebraically closed.  
Let $\alpha \in CH^d(X)_{R}$.  Then, by the theory of 
divisorial correspondences
\cite{MR106225}*{p.155 Thm. 2},
\cite{MR1265529}*{Thm. 3.9}:
$\alpha$ satisfies \eqref{vanishing.pair1}
if and only if $\alpha$ is in the Albanese kernel $T(X)_{R}
\subseteq CH^d(X)_{R}$.

\end{enumerate}
\end{rmk}

\begin{defi}[Griffiths  \cite{MR0309937}*{p.6-7}] \label{def.inc.equiv}
With the notation and conditions of 
\eqref{def.orth.fil.Chow}-\eqref{sss.main.not}.
We will say that $\alpha \in CH^n(X)_R$ is 
\emph{incident equivalent to zero}
if $\alpha \in CH^n_{\mathrm{alg}}(X)_R$, i.e. it is algebraically equivalent 
to zero, and satisfies the condition 
\eqref{vanishing.pair1}.

We will write $CH^n_{\mathrm{inc}}(X)_R\subseteq CH^n(X)_R$ 
for the $R$-submodule of
algebraic cycles which are incident equivalent to zero.
\end{defi}

\begin{rmk}  \label{rmk.vanpair1eqinceq}
We observe that for zero cycles, by 
\ref{rmk.car.vanpair1}\eqref{rmk.car.vanpair1.a},
$\alpha \in CH^d(X)_R$ satisfies \eqref{vanishing.pair1} if
and only if $\alpha \in CH^d_{\mathrm{inc}}(X)_R$.
\end{rmk}

\subsection{}

With the notation and conditions of 
\eqref{def.orth.fil.Chow}-\eqref{sss.main.not}.
We also need to consider the vanishing of the following pairings
which where studied by Bloch \cite{MR991974}*{p. 21}:

\subsubsection{}  \label{vanishing.pair2}
For every $Y\in Sm_k$, and every 
$\beta \in CH^{d-n+1}(X\times Y,1)_R$:
\begin{align*}
	\pi _{Y\ast}((\pi _X ^\ast \alpha)\cdot \beta)=0 \in 
	CH^1(Y,1)_R\cong \Gamma (Y, \mathcal O ^\ast _Y)_R,
\end{align*}
where $\pi_X:X\times Y \rightarrow X$, $\pi _Y:X\times Y\rightarrow Y$
are the projections.

\begin{rmk}  \label{rmk.hompair2van}
With the notation and conditions of 
\eqref{def.orth.fil.Chow}-\eqref{sss.main.not}.
Assume further that
$\alpha \in CH^n(X)_R$ is homologically equivalent to zero 
in the sense of \cite{MR991974}*{p. 21}.  Then, by 
\cite{MR991974}*{Lem. 1}
the condition
\eqref{vanishing.pair2} holds. 
\end{rmk}

\begin{lem}  \label{lem.nonperf.incequiv}
With the notation and conditions of 
\eqref{def.orth.fil.Chow}-\eqref{sss.main.not}.
Let $k$ be a non-perfect base field and $L$ its perfect closure.
Then, $\alpha \in CH^n (X)_R$ satisfies \eqref{vanishing.pair1}
(resp. \eqref{vanishing.pair2})
if and only if
$\alpha _L \in CH^n (X_L)_R$ satisfies \eqref{vanishing.pair1}
(resp. \eqref{vanishing.pair2}).
\end{lem}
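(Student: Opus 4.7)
The plan is to prove each implication separately via cycle-theoretic arguments. The ``if'' direction follows quickly from flat base change together with the vanishing of $p$-primary torsion after inverting $p$ in $R$. The ``only if'' direction is the main work: I spread out along the filtered colimit $L = \colim_i L_i$ of finite purely inseparable subextensions and then use a relative-Frobenius descent to reduce to the hypothesis over $k$.

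For the ``if'' direction, suppose $\alpha_L$ satisfies \eqref{vanishing.pair1}. Given $Y \in SmProj_k$ and $\beta \in CH^{d-n+1}(X \times_k Y)_R$, flat base change identifies $\pi_{Y_L \ast}((\pi_{X_L}^\ast \alpha_L)\cdot \beta_L)$ with $(\pi_{Y\ast}((\pi_X^\ast \alpha)\cdot \beta))_L$ in $CH^1(Y_L)_R$, which vanishes by hypothesis. The base-change map $CH^1(Y)_R \to CH^1(Y_L)_R$ has $p$-primary torsion kernel (by \eqref{torsion.ext.scalars}, applied to $\Hom_{DM_k}(M(Y)(-1)[-2], \sphere_R)$), and this kernel is zero because $1/p \in R$; so the original pairing vanishes. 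The identical argument, replacing $CH^1(-)$ by $CH^1(-, 1)$, handles \eqref{vanishing.pair2}.

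For the ``only if'' direction, suppose $\alpha$ satisfies \eqref{vanishing.pair1} over $k$, and fix $Y' \in SmProj_L$ and $\beta' \in CH^{d-n+1}(X_L \times_L Y')_R$. Standard spreading out yields a finite purely inseparable extension $L_i/k$, $Y_i \in SmProj_{L_i}$, and $\beta_i \in CH^{d-n+1}(X_{L_i}\times_{L_i} Y_i)_R$ with $(Y_i)_L = Y'$, $(\beta_i)_L = \beta'$, reducing us to showing $\pi_{Y_i\ast}((\pi_{X_{L_i}}^\ast \alpha_{L_i})\cdot \beta_i) = 0$ in $CH^1(Y_i)_R$, after which flat base change to $L$ gives the desired vanishing. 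Choose $m$ with $L_i^{p^m} \subseteq k$; the Frobenius $F_{L_i}^m$ then factors as $L_i \twoheadrightarrow k \hookrightarrow L_i$, which identifies the Frobenius twist $Y_i^{(p^m)}$ with the base change $(Y_0)_{L_i}$ of a scheme $Y_0 \in SmProj_k$ (smoothness and projectivity descending along the faithfully flat extension $L_i/k$), and the $m$-th relative Frobenius $F\colon Y_i \to (Y_0)_{L_i}$ is finite flat of degree $p^{m\cdot\dim Y_i}$. Pushing $\beta_i$ along $\mathrm{id}_{X_{L_i}} \times F$ and invoking the base-change isomorphism $CH^{d-n+1}(X\times_k Y_0)_R \xrightarrow{\cong} CH^{d-n+1}((X\times_k Y_0)_{L_i})_R$ (valid since $L_i/k$ is finite purely inseparable and $1/p \in R$) produces a cycle $\gamma_0 \in CH^{d-n+1}(X\times_k Y_0)_R$. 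The hypothesis applied to the pair $(Y_0, \gamma_0)$ gives $\pi_{Y_0\ast}((\pi_X^\ast\alpha)\cdot\gamma_0) = 0$; flat base change back to $L_i$ together with the projection formula for $F$ yields $F_\ast\bigl(\pi_{Y_i\ast}((\pi_{X_{L_i}}^\ast \alpha_{L_i})\cdot \beta_i)\bigr)=0$, and since $F$ is finite flat of $p$-power degree, $F_\ast$ is injective on $CH^1(Y_i)_R$ (again using $1/p\in R$), forcing the desired vanishing. The argument for \eqref{vanishing.pair2} is identical with $CH^1(-,1)$ in place of $CH^1(-)$, since both flat base change and the projection formula hold for Bloch's higher Chow groups.

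The main obstacle I expect is the Frobenius twist step: carefully constructing $Y_0 \in SmProj_k$ from the factorization $L_i \twoheadrightarrow L_i^{p^m}\subseteq k \hookrightarrow L_i$, verifying that smoothness and projectivity of $Y_0$ descend along the faithfully flat $L_i/k$, and checking the compatibility of the relative Frobenius pushforward with the base-change isomorphism and with the intersection product. Once this descent is in place, the remainder is a routine diagram chase using flat base change and the projection formula.
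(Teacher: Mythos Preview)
Your argument is essentially sound and is, in effect, a hands-on unpacking of what the paper dispatches in one line by citing \cite{MR3590347}*{Thm.~1.11 and Lem.~1.12} (those results give precisely the purely inseparable base-change isomorphisms and the Frobenius-descent input you are reconstructing). So the approaches differ only in that the paper outsources the work and you carry it out directly.

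Two small repairs are needed. First, in the ``if'' direction, \eqref{torsion.ext.scalars} only tells you the kernel of base change is torsion, not $p$-primary torsion; for $R=\mathbb Z[\tfrac{1}{p}]$ that is not yet enough. You should instead use that $L=\colim L_i$ with each $L_i/k$ finite purely inseparable of $p$-power degree, so that the push-pull composite is multiplication by a power of $p$ and hence the base-change map is injective once $p$ is inverted.

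Second, in the ``only if'' direction your factorization $F_{L_i}^m\colon L_i\twoheadrightarrow k\hookrightarrow L_i$ is not correct for an arbitrary $L_i$ with $L_i^{p^m}\subseteq k$: the image of $x\mapsto x^{p^m}$ is $L_i^{p^m}$, which is generally a proper subfield of $k$. The fix is painless: after spreading out, enlarge $L_i$ to $k^{1/p^m}$ (possible since $L_i\subseteq k^{1/p^m}$). Then $F^m\colon k^{1/p^m}\to k$ is a field isomorphism, and your $Y_0$ is simply $Y_i$ transported along this isomorphism, so $Y_0\in SmProj_k$ automatically with $(Y_0)_{L_i}\cong Y_i^{(p^m)}$; no separate descent argument for smoothness or projectivity is required. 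With these two adjustments the rest of your diagram chase (projection formula, injectivity of $F_\ast$ after inverting $p$) goes through as written.
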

\begin{proof}
Since the exponential characteristic
of $k$ is a unit in $R$ \eqref{coeffs.not},
the result follows from \cite{MR3590347}*{Thm. 1.11 and Lem. 1.12}.
\end{proof}

\begin{lem}  \label{lem.perf.incequiv}
With the notation and conditions of 
\eqref{def.orth.fil.Chow}-\eqref{sss.main.not}.
Let $k$ be a perfect base field and $L$ its algebraic closure.
Then, $\alpha \in CH^n (X)_{\mathbb Q}$ 
satisfies \eqref{vanishing.pair1}
(resp. \eqref{vanishing.pair2})
if and only if
$\alpha _L \in CH^n (X_L)_{\mathbb Q}$ satisfies \eqref{vanishing.pair1}
(resp. \eqref{vanishing.pair2}).
\end{lem}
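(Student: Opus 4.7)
The plan is to handle both vanishing conditions \eqref{vanishing.pair1} and \eqref{vanishing.pair2} in parallel, since structurally they both ask that $\pi_{Y\ast}((\pi_X^\ast \alpha)\cdot\beta)$ vanish in a higher Chow group $CH^1(Y,j)_{\mathbb Q}$ with $j\in\{0,1\}$, for every test pair $(Y,\beta)$ of the allowed kind; the two arguments are identical up to this choice of $j$.

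For the direction ``$\alpha_L$ satisfies the condition $\Rightarrow$ $\alpha$ does'', I would exploit that flat base change along $L/k$ commutes with pullback, with intersection product, and with the proper pushforward $\pi_{Y\ast}$ (proper because $X\in SmProj_k$). Consequently, for every test pair $(Y,\beta)$ over $k$, the class $\pi_{Y\ast}((\pi_X^\ast \alpha)\cdot\beta)\in CH^1(Y,j)_{\mathbb Q}$ base changes to $\pi_{Y_L \ast}((\pi_{X_L}^\ast \alpha_L)\cdot\beta_L)$, which vanishes by hypothesis. Combining \eqref{torsion.ext.scalars} with the motivic identification \eqref{eq.Chow.equal.mot}, the base change map on the target higher Chow group has torsion kernel, hence becomes injective after tensoring with $\mathbb Q$; so the $k$-pairing itself vanishes.

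For the converse direction, given a test pair $(Y',\beta')$ with $Y'\in Sm_L$ (resp.\ $SmProj_L$) and $\beta'\in CH^{d-n+1}(X_L\times_L Y',j)_{\mathbb Q}$, I would write $L=\colim_i K_i$ as the filtered colimit of its finite subextensions of $k$ and spread $(Y',\beta')$ out to some $(Y'_0,\beta'_0)$ defined over one of the $K_i$, using standard EGA IV.8 limit arguments together with the continuity of higher Chow groups under such colimits. Since $k$ is perfect, each $K_i/k$ is finite \'etale, so restriction of scalars exhibits $Y'_0$ as an object of $Sm_k$ (resp.\ $SmProj_k$, using that $\spec{K_i}$ is projective over $\spec{k}$). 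The key identification is
\[
X_{K_i}\times_{K_i}Y'_0 \;=\; X\times_k Y'_0
\]
as schemes, under which $\pi_{X_{K_i}}^\ast \alpha_{K_i}$ coincides with $\pi_X^\ast \alpha$ (via the factorization $\pi_X=p\circ\pi_{X_{K_i}}$, where $p:X_{K_i}\to X$ is the base change), and $\pi_{Y'_0 \ast}$ is intrinsic to its target. Thus the $K_i$-pairing of $\alpha_{K_i}$ against $\beta'_0$ equals the $k$-pairing of $\alpha$ against $\beta'_0$ (testing against $Y'_0$ viewed as a $k$-scheme), which vanishes by hypothesis; flat base change from $K_i$ up to $L$ then yields the required vanishing of the $L$-pairing against $\beta'$.

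The step I expect to be the main obstacle is this descent-and-restriction argument: one must check carefully that spreading $(Y',\beta')$ out to a finite subextension preserves smoothness, projectivity, and rational coefficients, and that the resulting $K_i$-pairing genuinely coincides with a $k$-pairing under restriction of scalars. Once these identifications are in place, neither direction requires anything beyond \eqref{torsion.ext.scalars} and basic limit theory for schemes and Chow groups, and both vanishing conditions can be treated uniformly.
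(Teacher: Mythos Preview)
Your proposal is correct and follows essentially the same route as the paper: one direction via \eqref{torsion.ext.scalars} (injectivity of base change on rational Chow groups), and the converse by descending the test pair $(Y',\beta')$ to a finite subextension $K_i/k$, using perfection of $k$ to view the descended $Y'_0$ as an object of $Sm_k$ (resp.\ $SmProj_k$), and invoking the identification $X_{K_i}\times_{K_i}Y'_0 = X\times_k Y'_0$ together with continuity of Chow groups. The paper's proof is terser but contains exactly these ingredients, and your anticipated ``main obstacle'' (preservation of smoothness/projectivity and the restriction-of-scalars identification) is precisely what the paper handles implicitly by the remark that $Y'\in Sm_k$ since $k$ is perfect and that $X_{k''}\times Y'_{k''}\in Sm_k$.
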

\begin{proof}
Assume that $\alpha _L \in CH^n (X_L)_{\mathbb Q}$ satisfies \eqref{vanishing.pair1}
(resp. \eqref{vanishing.pair2}).  Then, combining \eqref{torsion.ext.scalars}
with \eqref{sss.appl.Lieblem} we conclude that
$\alpha \in CH^n (X)_{\mathbb Q}$ also satisfies
\eqref{vanishing.pair1}
(resp. \eqref{vanishing.pair2}).

Now, assume that $\alpha \in CH^n (X)_{\mathbb Q}$ 
satisfies \eqref{vanishing.pair1}
(resp. \eqref{vanishing.pair2}).  Since the extension
$L/k$ is algebraic, we conclude that
for every $Y\in Sm_L$ there exist: a finite field extension
$k'/k$, $Y'\in Sm_{k'}$, such that $Y'_{L}\cong Y$.  We
observe that $Y'\in Sm_k$, since $k$ is perfect. 
Hence, 
\begin{align*}
 CH^{d-n+1}(X_L\times Y)_{\mathbb Q} &=\colim 
 _{k'\subseteq k''\subset L}\;
CH^{d-n+1}(X_{k''}\times Y'_{k''})_{\mathbb Q} \\
CH^{d-n+1}(X_L\times Y,1)_{\mathbb Q} &=\colim 
_{k'\subseteq k''\subset L}\;
CH^{d-n+1}(X_{k''}\times Y'_{k''},1)_{\mathbb Q}
\end{align*}
where $k''/k'$ is a finite field extension,
and $X_{k''}\times Y'_{k''}\in Sm_k$.  Thus, we conclude
that $\alpha _L \in CH^n (X_L)_{\mathbb Q}$ satisfies \eqref{vanishing.pair1}
(resp. \eqref{vanishing.pair2}).
\end{proof}

\begin{lem}  \label{lem.algeq.vanpair2}
With the notation and conditions of 
\eqref{def.orth.fil.Chow}-\eqref{sss.main.not}.
Let $k$ be an arbitrary base field.  Assume further
that $\alpha \in CH^n _{\mathrm{alg}}(X)_{\mathbb Q}$ \eqref{def.inc.equiv}.
Then, $\alpha$ 
satisfies \eqref{vanishing.pair2}.
\end{lem}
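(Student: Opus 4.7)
The plan is to prove this by reducing to the case of an algebraically closed base field, where algebraic equivalence implies homological equivalence, and then invoking Bloch's Lemma~1 as recorded in \eqref{rmk.hompair2van}.

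The first step is the reduction to the algebraically closed case. If $k$ is non-perfect, passing to its perfect closure $L$ via \eqref{lem.nonperf.incequiv} preserves both the hypothesis (algebraic equivalence is stable under base change: a smooth projective parameterizing curve $C$ together with a witnessing correspondence and two points base-changes directly to $X_L$) and the conclusion \eqref{vanishing.pair2}. If $k$ is perfect but not algebraically closed, we further pass to the algebraic closure $\overline{k}$ using \eqref{lem.perf.incequiv}; this step requires $\mathbb{Q}$-coefficients, which is precisely our standing assumption. Hence we may assume $k$ is algebraically closed.

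Under this assumption, any $\alpha \in CH^n_{\mathrm{alg}}(X)_{\mathbb{Q}}$ can be written as $\alpha = Z_\ast([p] - [q])$ with $C$ a smooth connected projective curve over $k$, $Z \in CH^n(C \times _k X)_{\mathbb{Q}}$, and $p, q \in C(k)$. Applying the $\ell$-adic cycle class map (for any prime $\ell$ invertible in $k$), the degree-one $k$-rational points $p$ and $q$ have equal class in $H^2(C,\mathbb{Q}_\ell(1)) \cong \mathbb{Q}_\ell$, since $C$ is connected; thus $Z_\ast([p] - [q])$ has trivial $\ell$-adic cohomology class, and $\alpha$ is homologically equivalent to zero in the sense of \cite{MR991974}.

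Finally, \eqref{rmk.hompair2van} records Bloch's Lemma~1, which asserts that homologically trivial cycles satisfy the pairing-vanishing condition \eqref{vanishing.pair2}. Combining with the reductions above yields the result. The main obstacle is matching Bloch's notion of homological equivalence with what follows from algebraic equivalence over an arbitrary base field; the reduction to an algebraically closed ground field sidesteps any subtlety by making $\ell$-adic étale cohomology available as a bona fide Weil cohomology theory, for which algebraic equivalence famously refines homological equivalence.
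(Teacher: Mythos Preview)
Your proof is correct, but it takes a more circuitous path than the paper's. The paper argues directly over the arbitrary base field $k$: it clears denominators to obtain $N\alpha \in CH^n_{\mathrm{alg}}(X)$ integrally, observes that $\pi_X^\ast(N\alpha) \in CH^n_{\mathrm{alg}}(X\times Y)$ is therefore homologically equivalent to zero in Bloch's sense over $k$ itself, applies Bloch's Lemma~1, and then divides by $N$ using the $\mathbb Q$-coefficients. No change of base field is needed. Your reductions via \eqref{lem.nonperf.incequiv} and \eqref{lem.perf.incequiv} are valid but superfluous, since the implication ``algebraically trivial $\Rightarrow$ homologically trivial in the sense of \cite{MR991974}'' already holds over any field; the detour through an explicit $\ell$-adic argument over $\bar k$ simply re-proves this special case. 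A minor imprecision: an element of $CH^n_{\mathrm{alg}}(X)_{\mathbb Q}$ need not be of the exact form $Z_\ast([p]-[q])$ but only a $\mathbb Q$-linear combination of such; this is harmless by linearity, and the paper handles it cleanly via the denominator-clearing step.
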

\begin{proof}
We notice that
 $N\alpha \in CH^n_{\mathrm{alg}}
(X)$  for some $N\in \mathbb Z$.  
Thus, $\pi _X ^\ast (N\alpha) \in CH^n_{\mathrm{alg}}
(X\times Y)$
\eqref{vanishing.pair2}, so $\pi _X ^\ast (N\alpha) \in CH^n 
(X\times Y)$
is homologically equivalent to zero in the sense of 
\cite{MR991974}*{p. 21}.
Then, by \cite{MR991974}*{Lem. 1},
we conclude that  \eqref{vanishing.pair2} holds for $N\alpha$, 
which implies that
 \eqref{vanishing.pair2} holds
for $\alpha$ and $R=\mathbb Q$.
\end{proof}

\subsection{Main results}

\begin{thm} \label{main.thm1}
Let $k$ be an arbitrary base field, $X\in SmProj _k$ of dimension $d$,
and $\alpha \in CH^d(X)_{\mathbb Q}$.
Then, $\alpha \in F^2CH^d(X)_{\mathbb Q}$ 
\eqref{def.orth.fil.Chow} if and only if
\eqref{vanishing.pair1} holds.  Hence,
$F^2CH^d(X)_{\mathbb Q} \cong
CH^d_{\mathrm{inc}}(X)_{\mathbb Q}$,
and if we assume further that $k$ is algebraically closed, then
$F^2CH^d(X)_{\mathbb Q} \cong T(X)_{\mathbb Q}$, the 
Albanese kernel.
\end{thm}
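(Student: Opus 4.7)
The strategy is to unwind $F^2 CH^d(X)_{\mathbb{Q}}$ motivically via \eqref{rmk.orth.crit}, translate the resulting motivic vanishings into intersection--pushforward vanishings via Lieberman's Lemma, and show that \eqref{vanishing.pair1} controls every remaining case.

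By \eqref{rmk.orth.crit}, the containment $\alpha \in F^2 CH^d(X)_{\mathbb{Q}}$ is equivalent to $f_{-1}(\alpha(-d)[-2d]) = 0$ in $DM_k$. The adjunction \eqref{sss.eff.cov} together with the compact generation of $\DMeff{k}(-1)$ by $\generators^{\mathrm{eff}}(-1)$ (see \eqref{subsubsec.neffDM}) recasts this as the condition that $\alpha(-d)[-2d] \circ g = 0$ in $\Hom_{DM_k}(M(Y)(r)[s], \sphere_{\mathbb{Q}}) \cong CH^{-r}(Y, -2r+s)_{\mathbb{Q}}$ (via \eqref{eq.Chow.equal.mot}) for every $Y \in Sm_k$, every $r \geq -1$, every $s \in \mathbb{Z}$, and every $g \colon M(Y)(r)[s] \to M(X)(-d)[-2d]$. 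The target group is nonzero only when $(r,s)=(0,0)$ or when $r=-1$ and $s \geq -2$, and in each such case Lieberman's Lemma \eqref{prop.Lieb-lem} identifies the composition with $\pi_{Y*}(\pi_X^* \alpha \cdot g^P)$ on $X \times Y$ for an appropriate cycle $g^P$. The forward direction is then immediate: restricting to $Y \in SmProj_k$ and $(r,s)=(-1,-2)$ recovers \eqref{vanishing.pair1}, since $g^P$ ranges over all of $CH^1(X \times Y)_{\mathbb{Q}}$.

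For the converse, assume \eqref{vanishing.pair1}. By Lemmas \eqref{lem.nonperf-perf.comp}, \eqref{lem.fieldext.comp}, \eqref{lem.nonperf.incequiv} and \eqref{lem.perf.incequiv}, both sides of the biconditional are stable under extending scalars to $\bar k$, so we may assume $k$ is algebraically closed; Remark \ref{rmk.car.vanpair1}\eqref{rmk.car.vanpair1.c} then identifies $\alpha$ with an element of the Albanese kernel, and in particular $\alpha \in CH^d_{\mathrm{alg}}(X)_{\mathbb{Q}}$ with $\deg\alpha=0$. The case $(r,s)=(0,0)$ reduces via the projection formula to the vanishing of $\deg\alpha$, which holds by numerical triviality (Remark \ref{rmk.car.vanpair1}\eqref{rmk.car.vanpair1.a}). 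The case $r=-1$, $s=-2$ with $Y \in Sm_k$ is obtained from \eqref{vanishing.pair1} by compactifying $Y \hookrightarrow \bar Y$ to a smooth projective $\bar Y$ (invoking alterations with $\mathbb{Q}$-coefficients when necessary), lifting $\beta \in CH^1(X\times Y)_{\mathbb{Q}}$ to $\bar\beta \in CH^1(X\times \bar Y)_{\mathbb{Q}}$, applying the hypothesis on $\bar Y$, and restricting via compatibility of proper pushforward with open restriction; a Zariski cover handles the non-quasi-projective case. The case $r=-1$, $s=-1$ is \eqref{vanishing.pair2}, supplied by Lemma \eqref{lem.algeq.vanpair2}.

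The main obstacle is the higher-weight range $r=-1$, $s\geq 0$, where one needs the vanishing of pairings valued in the higher Chow groups $CH^1(Y, s+2)_{\mathbb{Q}}$. The plan is to write $\alpha = N^{-1}(\gamma|_p - \gamma|_q)$ as the difference of two specializations of a cycle $\gamma \in CH^d(X\times C)$ at $k$-points $p,q$ of a smooth connected curve $C$, and to reinterpret $\pi_{Y*}(\pi_X^* \alpha \cdot \beta)$ as the difference at $p$ and $q$ of the class in $CH^1(C\times Y, s+2)_{\mathbb{Q}}$ obtained by pushing $\pi_{XC}^* \gamma \cdot \pi_{XY}^* \beta$ along $X\times C\times Y \to C\times Y$. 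Vanishing then follows from a higher-weight analogue of the Bloch argument underlying Lemma \eqref{lem.algeq.vanpair2}---the compatibility of higher Chow group pairings with specialization at distinguished sections---which is the package of compatibilities that Section \ref{sec.firstres} is designed to establish.
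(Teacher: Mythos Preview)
Your central reduction has a genuine gap. Compact generation of $\DMeff{k}(-1)$ by $\generators^{\mathrm{eff}}(-1)$ detects when an \emph{object} is zero, not when a \emph{map} is zero: the class of $C$ with $\Hom(C,h)=0$ is closed under sums and shifts but not under cones. Hence the biconditional you assert---that $f_{-1}(\alpha(-d)[-2d])=0$ iff every composition $\alpha(-d)[-2d]\circ g$ vanishes for $g$ ranging over maps from $M(Y)(r)[s]$ with $r\geq -1$---fails in the direction you need for the converse. In the paper's own analysis \eqref{diagram.1hom.vanpair2}--\eqref{prop.1hom.vanpair2}, after twisting by $(1)[2]$ the map $\alpha^{(1)}$ has two pieces: a sheaf map $\alpha_1^{(1)}\in\Hom_{\HINST{k}}(\mathbf{h}_1\varphi_{d-1}(M(X)),\mathcal O^\ast_R)$, which \emph{is} determined by sections and corresponds to your case $(r,s)=(-1,-1)$; and, once that vanishes, a class $\alpha_0^{(1)}\in\Hom_{\DMeff{k}}(\mathbf{h}_0\varphi_{d-1}(M(X)),\mathcal O^\ast_R[1])$, an $\mathrm{Ext}^1$-class which is \emph{not} determined by its effect on $\Hom(M(Y),-)$. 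Your case $(r,s)=(-1,-2)$ tests exactly this $\mathrm{Ext}^1$-class against sections, and the paper shows in \eqref{step4}--\eqref{step5} that such testing only forces $\alpha_0^{(1)}$ to factor through a boundary map $\delta$, not to vanish. Killing $\alpha_0^{(1)}$ is the whole content of the converse, and the paper does it by entirely different means: for $k$ not the algebraic closure of a finite field, by iterated smooth hyperplane sections \eqref{prop.zercyd4}--\eqref{prop.3fold} to push $\alpha$ down to a surface and then invoking the Kahn--Murre--Pedrini decomposition \eqref{prop.surface}; for $k=\overline{\mathbb F_p}$, by factoring through $k^\ast\otimes\mathbb Q=0$ \eqref{prop.unitfac}--\eqref{prop.alpha1ff}.

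A secondary point: your ``main obstacle'', the range $r=-1$, $s\geq 0$, is no obstacle at all, since $CH^1(Y,s+2)_{\mathbb Q}\cong H^{-s-1}_{\mathrm{Nis}}(Y,\mathcal O^\ast)\otimes\mathbb Q=0$ for $s\geq 0$ (using $\sphere(1)\cong\mathcal O^\ast[-1]$). Section~\ref{sec.firstres} does not supply the ``package of compatibilities'' you invoke; it carries out the hyperplane-section and surface argument just described. So you have correctly handled everything that can be seen by testing on generators, but misidentified what remains---and what remains is precisely the part that requires the paper's geometric input.
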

\begin{proof}
If $d=1$, we observe, by 
\ref{rmk.car.vanpair1}\eqref{rmk.car.vanpair1.b} that,
 $CH^1_{\mathrm{inc}}(X)_{\mathbb Q}=0$.  So, in this case
 the result follows from \cite{MR3614974}*{6.1.4}.  Thus,
 we may assume that $d\geq 2$.
 
 Now, if $\alpha \in F^2 CH^d (X)_{\mathbb Q}$,  
 then the result
follows from \eqref{prop.pf.nec.cond}.

On the other hand, if $\alpha \in CH^d (X)_{\mathbb Q}$ satisfies
\eqref{vanishing.pair1},
combining \eqref{lem.nonperf.incequiv} with \eqref{lem.nonperf-perf.comp},
we conclude that it is enough to consider the case of a perfect base
field $k$.

Now, let $k$ be a perfect field and $\bar{k}$ its algebraic closure.
By \eqref{lem.perf.incequiv}, $\alpha \in CH^d(X)_{\mathbb Q}$ satisfies
\eqref{vanishing.pair1} 
if and only if $\alpha_{\bar{k}} \in CH^n(X_{\bar{k}})_{\mathbb Q}$
satisfies \eqref{vanishing.pair1}.
Thus, considering
$L= \bar{k}$  in \eqref{lem.fieldext.comp}, we deduce
that it suffices to prove the result when $k$ is
algebraically closed.

So, we consider the case of an algebraically closed base field $k$.
If $k$ is not the algebraic closure of a finite field, 
we conclude
that $\alpha \in F^2 CH^d (X)_{\mathbb Q}$,
by the functoriality of the orthogonal
filtration \cite{MR3614974}*{6.1.4}, \eqref{def.orth.fil.Chow} and
combining \ref{rmk.car.vanpair1}\eqref{rmk.car.vanpair1.c},
 \eqref{prop.zercyd4}, \eqref{prop.3fold} and \eqref{prop.surface}.
 If $k$ is the algebraic closure of a finite field, we observe that
\eqref{rmk.vanpair1eqinceq} and
 \eqref{lem.algeq.vanpair2} imply that
 $\alpha \in CH^d (X)_{\mathbb Q}$ also satisfies \eqref{vanishing.pair2},
 and then we conclude that $\alpha \in F^2 CH^d (X)_{\mathbb Q}$
 by combining \eqref{prop.0-1hom.vanpair1-2}
 with \eqref{prop.alpha1ff}.
 
 Then, the isomorphism $F^2CH^d(X)_{\mathbb Q} \cong
CH^d_{\mathrm{inc}}(X)_{\mathbb Q}$ follows from
\eqref{rmk.vanpair1eqinceq}, and $CH^d_{\mathrm{inc}}(X)_{\mathbb Q}
=T(X)_{\mathbb Q}$ follows from 
the definition \eqref{def.inc.equiv}
and \ref{rmk.car.vanpair1}\eqref{rmk.car.vanpair1.c}.
\end{proof}

\begin{thm} \label{main.thm3}
Let $k$ be an arbitrary base field,  
$X$, $Y\in SmProj _k$ of dimension $d$, $e$,
respectively;
and $\alpha \in CH^n_{\mathrm{alg}}(X)_{\mathbb Q}$,
$\beta \in CH^m_{\mathrm{alg}}(Y)_{\mathbb Q}$.
Then, 
\begin{enumerate}
\item \label{main.thm3.a}
the exterior product: $\alpha \otimes \beta \in F^2 CH^{n+m}
(X\times Y)_{\mathbb Q} \subset CH^{n+m}(X\times Y)_{\mathbb Q}$,
\item \label{main.thm3.b}
assume further that  $Y=X$, then the intersection product: 

$\alpha \cdot \beta \in F^2 CH^{n+m}
(X)_{\mathbb Q} \subset CH^{n+m}(X)_{\mathbb Q}$.
\end{enumerate}
\end{thm}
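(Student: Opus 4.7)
Part (b) reduces to part (a) because the intersection product $\alpha \cdot \beta$ is represented in $\DM{k}$ by the composition
\begin{align*}
M(X) \xrightarrow{M(\Delta_X)} M(X) \otimes M(X) \cong M(X \times X) \xrightarrow{\alpha \otimes \beta} \sphere _{\mathbb Q}(n+m)[2(n+m)],
\end{align*}
where $\Delta_X$ is the diagonal.  By \eqref{def.orth.fil.Chow}, membership in $F^2$ amounts to admitting a lift along $\theta_{-2}^{\sphere _{\mathbb Q}}$, a condition stable under precomposition with morphisms in $\DM{k}$; hence $\alpha \otimes \beta \in F^2 CH^{n+m}(X \times X)_{\mathbb Q}$ forces $\alpha \cdot \beta \in F^2 CH^{n+m}(X)_{\mathbb Q}$.

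To prove (a), I would first invoke \eqref{rmk.orth.crit}.  Since algebraic equivalence implies numerical equivalence over $\mathbb Q$, Proposition \eqref{prop.nonperf.comp} gives $\alpha \in F^1 CH^n(X)_{\mathbb Q}$ and $\beta \in F^1 CH^m(Y)_{\mathbb Q}$, i.e.\ $f_0(a) = 0$ and $f_0(b) = 0$ in $\DM{k}$, where $a := \alpha(-n)[-2n] : E \to \sphere _{\mathbb Q}$, $b := \beta(-m)[-2m] : F \to \sphere _{\mathbb Q}$, $E := M(X)(-n)[-2n]$, and $F := M(Y)(-m)[-2m]$.  Under the K\"unneth isomorphism $M(X \times Y) \cong M(X) \otimes M(Y)$ and the definition of the Tate twist, the map $(\alpha \otimes \beta)(-(n+m))[-2(n+m)]$ identifies with $a \otimes b : E \otimes F \to \sphere _{\mathbb Q}$; by \eqref{rmk.orth.crit} again, it is therefore enough to show $f_{-1}(a \otimes b) = 0$.

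The condition $f_0(a) = 0$ is equivalent to $a \circ \epsilon^E = 0$ for $\epsilon^E : f_0(E) \to E$ the counit of $i_0 \dashv r_0$ \eqref{sss.eff.cov}, so $a = a' \circ q_E$, where $q_E : E \to \bar E$ is the canonical map to the cofiber $\bar E$ of $\epsilon^E$ and $a' : \bar E \to \sphere _{\mathbb Q}$.  The same adjunction makes $\Hom_{\DM{k}}(G, \epsilon^E)$ an isomorphism for every $G \in \DMeff{k}(0)$, so the long exact sequence of the triangle $f_0(E) \to E \to \bar E \to [1]$ gives $\bar E \in \northogonal{k}{0}$; symmetrically $\bar F \in \northogonal{k}{0}$ and $b = b' \circ q_F$.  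Then $a \otimes b = (a' \otimes b') \circ (q_E \otimes q_F)$ factors through $\bar E \otimes \bar F$.  Granting the multiplicativity lemma $\northogonal{k}{0} \otimes \northogonal{k}{0} \subset \northogonal{k}{-1}$, the composition $f_{-1}(E \otimes F) \xrightarrow{\epsilon_{-1}} E \otimes F \xrightarrow{q_E \otimes q_F} \bar E \otimes \bar F$ vanishes (source $(-1)$-effective, target in $\northogonal{k}{-1}$), and naturality of the counit yields $f_{-1}(a \otimes b) = (a \otimes b) \circ \epsilon_{-1} = 0$.

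The main obstacle is the multiplicativity lemma $\northogonal{k}{0} \otimes \northogonal{k}{0} \subset \northogonal{k}{-1}$.  I would attack it through the compact generation of $\DMeff{k}(-1)$, reducing to the vanishing of $\Hom_{\DM{k}}(M(Z)(r), A \otimes B)$ for $Z \in Sm_k$, $r \geq -1$, and $A$, $B \in \northogonal{k}{0}$; then, using Voevodsky's cancellation theorem and, when available, resolution of singularities or Gabber's alteration together with the standing $\mathbb Z[\tfrac{1}{p}]$-coefficient hypothesis \eqref{coeffs.not}, reduce to $Z \in SmProj_k$, so that $M(Z)(r)$ is strongly dualizable; the tensor-Hom adjunction then rewrites the Hom as a pairing that vanishes because $A$, $B \in \northogonal{k}{0}$.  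An alternative route is to extract the lemma from the compatibility of the functors $bc_{\leq m}$ with the monoidal structure of $\DM{k}$ already addressed in \cite{MR3614974}*{\S 6}.
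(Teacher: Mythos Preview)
Your reduction of \eqref{main.thm3.b} to \eqref{main.thm3.a} via the diagonal is the same as the paper's.

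For \eqref{main.thm3.a}, however, your route diverges from the paper's and has a genuine gap at the ``multiplicativity lemma'' $\northogonal{k}{0}\otimes\northogonal{k}{0}\subset\northogonal{k}{-1}$.  Neither of your sketched attacks goes through.  In the duality approach you can only dualize the compact test object $M(Z)(r)$, obtaining $\Hom_{\DM{k}}(M(Z)(r),A\otimes B)\cong\Hom_{\DM{k}}(\sphere,M(Z)(-d_Z-r)[-2d_Z]\otimes A\otimes B)$; since neither $A$ nor $B$ is dualizable there is no further adjunction that isolates a map into $A$ or into $B$ alone, so the hypothesis $A,B\in\northogonal{k}{0}$ never gets used.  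As for extracting the lemma from \cite{MR3614974}*{\S 6}: what is available there is multiplicativity on the \emph{slice} side, $\DMeff{k}(a)\otimes\DMeff{k}(b)\subset\DMeff{k}(a+b)$, and functoriality of $F^\bullet$ under correspondences; the analogous statement for the right orthogonals $\northogonal{k}{m}$ is not established.  Indeed, had $F^1\cdot F^1\subset F^2$ been known, the present theorem would be an immediate corollary (algebraic $\Rightarrow$ numerical $\Rightarrow F^1$), and the paper would not route the proof through \eqref{main.thm1}.  Your argument, as written, only uses numerical triviality of $\alpha$ and $\beta$, so it is in effect claiming the stronger, and as far as I know open, multiplicativity of the orthogonal filtration.

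The paper's argument exploits precisely the extra structure you discard: algebraic equivalence lets one write $\alpha$ and $\beta$ as images, under Chow correspondences, of degree-zero $0$-cycles $\gamma_\alpha$, $\gamma_\beta$ on smooth projective curves $C$, $C'$.  Functoriality of $F^\bullet$ under correspondences (which \emph{is} in \cite{MR3614974}*{6.1.4}) then reduces to showing $\gamma_\alpha\otimes\gamma_\beta\in F^2CH^2(C\times C')_{\mathbb Q}$.  At this point one is looking at $0$-cycles on a surface, so \eqref{main.thm1} applies and identifies $F^2$ with the Albanese kernel; the bilinear map $J(C)(k)\times J(C')(k)\to Alb(C\times C')(k)$ vanishes by rigidity.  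The moral is that the hypothesis ``algebraically equivalent to zero'' is doing real work: it buys the reduction to curves, where the second step of the filtration has been computed.
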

\begin{proof}
\eqref{main.thm3.a}:  
We notice that, by \eqref{lem.fieldext.comp}, it is enough to prove
the result when $k$ is algebraically closed.  Now,
since $\alpha$ and $\beta$ are algebraically equivalent to zero,
there exist curves:
$C$, $C'\in SmProj_k$, zero cycles:
$\gamma _{\alpha}\in CH^1_{\mathrm{alg}}(C)_{\mathbb Q}$,
$\gamma _{\beta}\in CH^1_{\mathrm{alg}}(C')_{\mathbb Q}$,
and Chow correspondences:
$\Lambda _{\alpha}\in CH^{n}(X\times C)_{\mathbb Q}$,
$\Lambda _{\beta}\in CH^{m}(Y\times C')_{\mathbb Q}$, such that:
\begin{align*}
\alpha = p_{X\ast}(\Lambda _{\alpha}\cdot (p_C^\ast \gamma _{\alpha})),\\
\beta = p_{Y\ast}(\Lambda _{\beta}\cdot (p_{C'}^\ast \gamma _{\beta})),
\end{align*}
where $p_X:X\times C \rightarrow X$, $p_C:X\times C \rightarrow C$,
$p_Y:Y\times C' \rightarrow Y$ and $p_{C'}:Y\times C'$ are the
projections.

Thus,
$\alpha \otimes \beta =p_{X\times Y \ast}((\Lambda_{\alpha}\otimes
\Lambda _{\beta})\cdot (p_C\times p_{C'})^\ast (\gamma _{\alpha}\otimes
\gamma _{\beta}))$.  So, by the functoriality of the orthogonal
filtration \cite{MR3614974}*{6.1.4}, \eqref{def.orth.fil.Chow} 
it suffices to show that
$\gamma_{\alpha}\otimes \gamma _{\beta}\in F^2 CH^2(C\times C')
_{\mathbb Q}$.

We observe that
$\gamma _{\alpha}\otimes \gamma_{\beta}\in
CH^2_{\mathrm{alg}}(C\times C')_{\mathbb Q}$, so
\eqref{main.thm1} 
implies that it suffices to show that
$\gamma _{\alpha}\otimes \gamma_{\beta} \in
T(C\times C')_{\mathbb Q}$, the Albanese kernel of
$C\times C'$.  This is well known.

We provide the details.
Let $J(C)$, $J(C')$ be the Jacobian of $C$, $C'$,
respectively.  Thus, $J(C)(k)\cong CH^1_{\mathrm{alg}}(C)$ and
$J(C')(k)\cong CH^1_{\mathrm{alg}}(C')$, so, composing
the exterior product, $E$, and the Albanese morphism
we obtain the following diagram:
\begin{align*}
\xymatrix{J(C)(k)\times J(C')(k) \ar[r]^-{E}& 
CH^2_{\mathrm{alg}}(C\times C')
\ar[r]^-a& Alb(C\times C')(k)}
\end{align*}
By rigidity 
\cite{MR106225}*{p.22 Lem. 2}: $a\circ E=0$, since
$E(J(C)(k)\times \{0 \})=0=E(\{0 \} \times J(C')(k))$.  Thus,
$\gamma _{\alpha}\otimes \gamma _{\beta}\in 
T(C\times C')_{\mathbb Q}$, 
since $\gamma _{\alpha}\otimes \gamma _{\beta}$ is in the image
of $E$.

\eqref{main.thm3.b}: The result follows from 
\ref{main.thm3}\eqref{main.thm3.a} and the functoriality of the
orthogonal filtration \cite{MR3614974}*{6.1.4}, since
$\alpha \cdot \beta = \Delta _X ^\ast (\alpha \otimes \beta)$,
where $\Delta _X: X\rightarrow X\times X$ is the diagonal
embedding.
\end{proof}

\begin{thm} \label{main.thm2}
Let $k$ be a base field which is either finite or the
algebraic closure of a finite field, 
$X\in SmProj_k$ of dimension $d$, and $\alpha \in CH^n(X)_{\mathbb Q}$,
$2\leq n \leq d$.  Then,
\begin{enumerate}
\item  \label{main.thm2.a}
$\alpha \in F^2CH^n(X)_{\mathbb Q}$ 
\eqref{def.orth.fil.Chow} if and only if
\eqref{vanishing.pair1} and \eqref{vanishing.pair2} hold.

\item \label{main.thm2.b}
assume further that $\alpha \in 
CH^n_{\mathrm{alg}}(X)_{\mathbb Q}$, i.e. it is
algebraically equivalent to zero.
Then, $\alpha \in F^2CH^n(X)_{\mathbb Q}$ 
\eqref{def.orth.fil.Chow} if and only if
$\alpha \in CH^n_{\mathrm{inc}}(X)_{\mathbb Q}$ \eqref{def.inc.equiv}.
\end{enumerate}
\end{thm}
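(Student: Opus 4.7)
The plan is to follow the strategy used at the end of the proof of \eqref{main.thm1} for the case of the algebraic closure of a finite field, and to extend it from zero-cycles to arbitrary codimension $n\geq 2$ under the stronger hypothesis on $k$.

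\textbf{Part \eqref{main.thm2.a}.} The "only if" direction should already be available: the paper announces in the outline of section \ref{sec.firstres} that $F^2CH^n(X)_{\mathbb Q}$ always satisfies both \eqref{vanishing.pair1} and \eqref{vanishing.pair2}, so I would invoke \eqref{prop.pf.nec.cond} for this direction, with no restriction on the base field. For the "if" direction, the first move is to reduce to the algebraically closed case: if $k$ is finite, set $L=\bar k$ (which is again the algebraic closure of a finite field); otherwise $L=k$ and no reduction is needed. The vanishing conditions \eqref{vanishing.pair1} and \eqref{vanishing.pair2} are stable under this algebraic base change by \eqref{lem.perf.incequiv}, and $\alpha\in F^2CH^n(X)_{\mathbb Q}$ is equivalent to $\alpha_L\in F^2CH^n(X_L)_{\mathbb Q}$ by \eqref{lem.fieldext.comp}. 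Once the base field is the algebraic closure of a finite field, the two technical inputs \eqref{prop.0-1hom.vanpair1-2} and \eqref{prop.alpha1ff} already used in \eqref{main.thm1} should conclude: the first turns the vanishing of the two pairings into a vanishing statement for $f_{-1}(\alpha(-n)[-2n])$ in $DM_k$, and the second then yields $\alpha\in F^2CH^n(X)_{\mathbb Q}$ via the orthogonality criterion \eqref{rmk.orth.crit}.

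\textbf{Part \eqref{main.thm2.b}.} This should be a formal consequence of part \eqref{main.thm2.a} combined with \eqref{lem.algeq.vanpair2}. If $\alpha$ is algebraically equivalent to zero, then \eqref{vanishing.pair2} is automatic by \eqref{lem.algeq.vanpair2}, so the characterization in part \eqref{main.thm2.a} collapses to the single condition \eqref{vanishing.pair1}; and by \eqref{def.inc.equiv} this is precisely the condition cutting out $CH^n_{\mathrm{inc}}(X)_{\mathbb Q}$ inside $CH^n_{\mathrm{alg}}(X)_{\mathbb Q}$. Both implications then follow by chasing through these definitions.

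\textbf{Main obstacle.} The delicate point is to ensure that \eqref{prop.0-1hom.vanpair1-2} and \eqref{prop.alpha1ff} apply to arbitrary codimension $n\geq 2$ and not only to zero-cycles, as they were used in \eqref{main.thm1}. This is where the hypothesis that $k$ is finite or the algebraic closure of a finite field must be decisive: over such fields the motivic cohomology of smooth projective varieties enjoys enough rigidity (Frobenius weights, Tate-type statements in the relevant bidegrees) that the homotopy $t$-structure analysis of section \ref{sec.furthred} carries intersection-theoretic vanishing upward into slice-filtration vanishing in every codimension. I would therefore spend the bulk of the argument checking that the inputs from section \ref{sec.furthred} are being applied in a range where these arithmetic hypotheses suffice, and that no extra assumption was silently used in \eqref{main.thm1} that was particular to $n=d$.
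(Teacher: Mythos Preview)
Your proposal is correct and follows essentially the same route as the paper's proof: \eqref{prop.pf.nec.cond} for the forward direction, the base-change reductions \eqref{lem.perf.incequiv} and \eqref{lem.fieldext.comp}, then \eqref{prop.0-1hom.vanpair1-2} and \eqref{prop.alpha1ff} for the converse, with part \eqref{main.thm2.b} deduced from part \eqref{main.thm2.a} via \eqref{lem.algeq.vanpair2}.

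Your ``main obstacle'' is not actually an obstacle: \eqref{prop.0-1hom.vanpair1-2} and \eqref{prop.alpha1ff} are stated and proved in section \ref{sec.furthred} for arbitrary codimension $n$, not just $n=d$, so no extra work is needed. The reason the hypothesis on $k$ enters is also more elementary than you suggest---it is not Frobenius weights or Tate-type input, but simply that $k^\ast$ is a torsion group when $k$ is the algebraic closure of a finite field, so $k^\ast\otimes\mathbb Q\cong 0$ \eqref{prop.vanish.units}, which forces the map $\gamma_1^X$ in \eqref{diag.fact.pairing} to vanish and hence $\alpha_0^{(1)}=0$.
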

\begin{proof}
\eqref{main.thm2.a}:
Assume that
$\alpha \in F^2CH^n(X)_{\mathbb Q}$, then the result follows
from \eqref{prop.pf.nec.cond}.
On the other hand, if \eqref{vanishing.pair1} and \eqref{vanishing.pair2} 
hold, by \eqref{lem.perf.incequiv} and \eqref{lem.fieldext.comp},
it suffices to prove the result when $k$ is algebraically closed.
Then,
we conclude, by \eqref{prop.0-1hom.vanpair1-2}
and \eqref{prop.alpha1ff}, that $\alpha \in F^2CH^n(X)_{\mathbb Q}$.

\eqref{main.thm2.b}:
By \eqref{lem.algeq.vanpair2}, we deduce that
$\alpha \in CH^n_{\mathrm{inc}}(X)_{\mathbb Q}$ \eqref{def.inc.equiv},
if and only if $\alpha$ satisfies \eqref{vanishing.pair1}
and \eqref{vanishing.pair2}.  Then, the
result follows from \ref{main.thm2}\eqref{main.thm2.a}.
\end{proof}

\section{First reductions}  \label{sec.firstres}

\subsection{}  \label{s.pf.main}
With the notation and conditions
of \eqref{def.orth.fil.Chow}-\eqref{sss.main.not}.
Let $Y\in SmProj_k$, $a=-2$ (resp. $Y\in Sm_k$, $a=-1$) and
$\beta \in \Hom_{\DM{k}}(M(Y)(-1)[a], M(X)(-n)[-2n])$ an
arbitrary map.

Consider the following commutative diagram 
in $DM_k$ \eqref{sss.eff.cov}:
\begin{align}  \label{diagram.criteria.thm}
\begin{split}
   \xymatrix{f_{-1}(M(X)(-n)[-2n]) \ar[rr]^-{\epsilon _{-1}^{M(X)(-n)[-2n]}}&&
   M(X)(-n)[-2n] \ar[rr]^-{\alpha(-n)[-2n]}&& \sphere _R\\
   &&M(Y)(-1)[a] \ar[u]_-{\beta} \ar@{-->}[ull]^-{\beta '}&&}
\end{split}
\end{align}
where the existence of the map $\beta '$ follows from 
the universal property of
the counit $\epsilon _{-1}:f_{-1}\rightarrow id$
\cite{MR3614974}*{3.3.1}:
\begin{align}  \label{diagram.univprop}
\begin{split}
\xymatrix{
\Hom _{\DM{k}}(M(Y)(-1)[a], f_{-1}(M(X)(-n)[-2n]))
\ar[d]_{\cong}^{(\epsilon _{-1}^{M(X)(-n)[-2n]})_{\ast}}\\
\Hom _{\DM{k}}(M(Y)(-1)[a], M(X)(-n)[-2n])}
\end{split}
\end{align}

\subsubsection{}  \label{sss.appl.Lieblem}
With the notation and conditions of \eqref{s.pf.main}.
By Lieberman's lemma \eqref{prop.Lieb-lem}
$\alpha \in CH^n(X)_R$ satisfies \eqref{vanishing.pair1} (resp.
\eqref{vanishing.pair2}) if and only if
the map induced by $\alpha (-n)[-2n]$ in
\eqref{diagram.criteria.thm} is zero
for every $Y\in SmProj_k$ and $a=-2$ 
(resp. every $Y\in Sm_k$ and $a=-1$):
\begin{align*}
\xymatrix{
\Hom _{\DM{k}}(M(Y)(-1)[a], M(X)(-n)[-2n])
\ar[d]^{\alpha (-n)[-2n]_{\ast} =0}\\
\Hom _{\DM{k}}(M(Y)(-1)[a], \sphere _R).}
\end{align*}

\begin{prop}  \label{prop.vanpair-vancomp}
Let $k$ be an arbitrary base field, and with the notation and conditions
of \eqref{def.orth.fil.Chow}-\eqref{sss.main.not}.  Then,
$\alpha \in CH^n(X)_R$ satisfies \eqref{vanishing.pair1} (resp.
\eqref{vanishing.pair2}) if and only if
the map induced by the top row
of \eqref{diagram.criteria.thm} is zero
for every $Y\in SmProj_k$ and $a=-2$ 
(resp. every $Y\in Sm_k$ and $a=-1$):
\begin{align*}
\xymatrix{
\Hom _{\DM{k}}(M(Y)(-1)[a], f_{-1}(M(X)(-n)[-2n]))
\ar[d]^{(\alpha (-n)[-2n]\circ
\epsilon _{-1}^{M(X)(-n)[-2n]})_{\ast} =0}\\
\Hom _{\DM{k}}(M(Y)(-1)[a], \sphere _R)}
\end{align*}
\end{prop}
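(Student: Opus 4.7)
The plan is to deduce the statement as a formal consequence of Lieberman's lemma, as reformulated in \eqref{sss.appl.Lieblem}, combined with the universal property \eqref{diagram.univprop} of the counit $\epsilon_{-1}$. First I would verify the hypothesis that $M(Y)(-1)[a]$ lies in $\DMeff{k}(-1)$, so that \eqref{diagram.univprop} indeed applies: by \eqref{eq.DMeffgenstw} and \eqref{subsubsec.neffDM}, $\DMeff{k}(-1)$ contains all $M(Y)(r)[a]$ with $r\geq -1$, and we are dealing precisely with the borderline twist $r=-1$. This is the only non-formal point in the argument.

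Once this is in place, by \eqref{diagram.univprop} composition with $\epsilon _{-1}^{M(X)(-n)[-2n]}$ induces an isomorphism
\begin{align*}
\Hom_{\DM{k}}(M(Y)(-1)[a], f_{-1}(M(X)(-n)[-2n])) \xrightarrow{\cong}
\Hom_{\DM{k}}(M(Y)(-1)[a], M(X)(-n)[-2n]).
\end{align*}
By the commutativity of the upper triangle in \eqref{diagram.criteria.thm}, under this isomorphism post-composition with $\alpha(-n)[-2n]$ corresponds exactly to post-composition with $\alpha(-n)[-2n]\circ \epsilon_{-1}^{M(X)(-n)[-2n]}$. Hence $(\alpha(-n)[-2n])_\ast$ vanishes on $\Hom_{\DM{k}}(M(Y)(-1)[a], M(X)(-n)[-2n])$ if and only if $(\alpha(-n)[-2n]\circ \epsilon_{-1}^{M(X)(-n)[-2n]})_\ast$ vanishes on $\Hom_{\DM{k}}(M(Y)(-1)[a], f_{-1}(M(X)(-n)[-2n]))$.

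To finish, I would invoke \eqref{sss.appl.Lieblem}, which (via Lieberman's lemma \eqref{prop.Lieb-lem}) identifies the vanishing pairing \eqref{vanishing.pair1} for $Y\in SmProj_k$ and $a=-2$ (respectively \eqref{vanishing.pair2} for $Y\in Sm_k$ and $a=-1$) with the vanishing of $(\alpha(-n)[-2n])_\ast$ on $\Hom_{\DM{k}}(M(Y)(-1)[a], M(X)(-n)[-2n])$ for all such $Y$. Combining this equivalence with the equivalence from the previous paragraph yields the statement of the proposition. No non-trivial input beyond the universal property of $f_{-1}$ and Lieberman's lemma is required; the argument is entirely diagrammatic and does not involve choosing any auxiliary cycles or resolutions.
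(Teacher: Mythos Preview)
Your proposal is correct and follows exactly the paper's approach: the paper's proof is a one-line invocation of \eqref{sss.appl.Lieblem} together with the isomorphism \eqref{diagram.univprop}, and you have simply unpacked these two ingredients in detail (including the easy check that $M(Y)(-1)[a]\in \DMeff{k}(-1)$, which the paper leaves implicit in the setup of \eqref{diagram.criteria.thm}--\eqref{diagram.univprop}).
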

\begin{proof}
The result follows by \eqref{sss.appl.Lieblem} and
the isomorphism \eqref{diagram.univprop}.
\end{proof}

\begin{prop}  \label{prop.pf.nec.cond}
Let $k$ be an arbitrary base field, and with the notation and conditions
of \eqref{def.orth.fil.Chow}-\eqref{sss.main.not}.  Assume further that
$\alpha \in F^2 CH^n(X)_R$ \eqref{def.orth.fil.Chow}.  Then, \eqref{vanishing.pair1} and
\eqref{vanishing.pair2} hold.
\end{prop}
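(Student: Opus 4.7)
The plan is to reduce directly to Proposition \ref{prop.vanpair-vancomp} by combining the characterization of $F^2$ in Remark \ref{rmk.orth.crit} with the naturality of the counit $\epsilon_{-1}: f_{-1} \rightarrow \mathrm{id}$. No serious obstacle is expected, since all the heavy lifting (Lieberman's lemma, the universal property of $\epsilon_{-1}$) has already been packaged in Proposition \ref{prop.vanpair-vancomp}.

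First, by Remark \ref{rmk.orth.crit} applied with $m=2$, the assumption $\alpha \in F^2 CH^n(X)_R$ is equivalent to the vanishing $f_{-1}(\alpha(-n)[-2n]) = 0$ in $DM_k$.

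Second, I would apply naturality of the counit $\epsilon_{-1}: f_{-1} \rightarrow \mathrm{id}$ to the morphism $\alpha(-n)[-2n] : M(X)(-n)[-2n] \rightarrow \sphere_R$, yielding a commutative square
\begin{equation*}
\xymatrix{
f_{-1}(M(X)(-n)[-2n]) \ar[rr]^-{f_{-1}(\alpha(-n)[-2n])} \ar[d]_-{\epsilon_{-1}^{M(X)(-n)[-2n]}} && f_{-1}(\sphere_R) \ar[d]^-{\epsilon_{-1}^{\sphere_R}} \\
M(X)(-n)[-2n] \ar[rr]^-{\alpha(-n)[-2n]} && \sphere_R
}
\end{equation*}
The top horizontal arrow is zero by the previous step, so the composition along the bottom-left, namely $\alpha(-n)[-2n] \circ \epsilon_{-1}^{M(X)(-n)[-2n]}$, also vanishes.

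Finally, I would invoke Proposition \ref{prop.vanpair-vancomp} in both of its instances: with $Y \in SmProj_k$ and $a=-2$ to conclude \eqref{vanishing.pair1}, and with $Y \in Sm_k$ and $a=-1$ to conclude \eqref{vanishing.pair2}. Since the map $(\alpha(-n)[-2n] \circ \epsilon_{-1}^{M(X)(-n)[-2n]})_{\ast}$ is zero on all $\Hom_{DM_k}(M(Y)(-1)[a],-)$-groups (because the morphism itself is zero), both pairing conditions follow.
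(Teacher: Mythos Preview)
Your proof is correct and follows essentially the same route as the paper: show that $\alpha(-n)[-2n]\circ\epsilon_{-1}^{M(X)(-n)[-2n]}=0$ and then invoke Proposition~\ref{prop.vanpair-vancomp}. The only difference is cosmetic: the paper obtains that vanishing by citing \cite{MR3614974}*{5.3.2} directly, whereas you unpack it via Remark~\ref{rmk.orth.crit} and the naturality square for $\epsilon_{-1}$, which amounts to the same thing.
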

\begin{proof}
Since $\alpha \in F^2 CH^n(X)_R$,
by \cite{MR3614974}*{5.3.2} we conclude that the composition
in the top row of \eqref{diagram.criteria.thm} is zero:
$\alpha(-n)[-2n] \circ \epsilon _{-1}^{M(X)(-n)[-2n]} =0$, so
 the result follows from
\eqref{prop.vanpair-vancomp}.
\end{proof}

\subsection{Zero cycles}  \label{subsec.zerocyc}
With the notation and conditions of 
\eqref{def.orth.fil.Chow}-\eqref{sss.main.not}.
In this section, we assume further that the base field $k$ is algebraically
closed, $R=\mathbb Q$ \eqref{coeffs.not}, and
$\alpha \in CH^d(X)_R$
 satisfies
\eqref{vanishing.pair1}, or equivalently, by \ref{rmk.car.vanpair1}\eqref{rmk.car.vanpair1.c}:
$\alpha \in T(X)_R$, the Albanese kernel of $X$.  We will write
$\inthomeff$ for the internal Hom-functor in $\DMeff{k}$.
Recall that $d$ is the dimension of $X$.

\begin{prop}  \label{prop.zercyd4}
With the notation and conditions of \eqref{subsec.zerocyc}.
Assume further
that $d=\mathrm{dim}\; X \geq 4$.
Then there exists a smooth hyperplane section
$i:H\rightarrow X$ such that the following conditions hold:
\begin{enumerate}
\item There exists $\alpha _H \in CH^{d-1}(H)_{\mathbb Q}$
such that $i_{\ast}(\alpha _H)=\alpha \in CH^d (X)_{\mathbb Q}$, and
\item $\alpha _H \in T(H)_{\mathbb Q}\subseteq CH^{d-1}(H)_{\mathbb Q}$,
the Albanese kernel of $H$.
\end{enumerate}
\end{prop}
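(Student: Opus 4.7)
The plan is a Bertini-plus-Lefschetz argument: find a smooth hyperplane section $i:H \hookrightarrow X$ passing through the support of a representative of $\alpha$, and then invoke the Lefschetz hyperplane theorem for Albanese varieties to see that the lift $\alpha_H$ still lies in the Albanese kernel of $H$.

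First I would choose a zero-cycle representative of $\alpha$ whose support $S=\{p_1,\ldots,p_N\}$ is a finite set of closed points of $X$. Fix a projective embedding $X \hookrightarrow \mathbb{P}^M$; composing with a sufficiently high Veronese re-embedding if necessary, the linear system of hyperplanes of $\mathbb{P}^M$ containing $S$ admits general elements transverse to $X$ at each $p_i$ and generically along $X$. Bertini's theorem, valid over the algebraically closed field $k$, then yields a smooth hyperplane section $i:H \hookrightarrow X$ with $S \subset H$ and $\dim H = d-1 \geq 3$. Defining $\alpha_H \in CH^{d-1}(H)_{\mathbb{Q}}$ to be the zero-cycle on $H$ supported on $S$ with the same $\mathbb{Q}$-coefficients as the chosen representative of $\alpha$, the equality $i_\ast \alpha_H = \alpha$ in $CH^{d}(X)_{\mathbb{Q}}$ is immediate from the definition of push-forward along a closed immersion, establishing (1).

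For (2), the functoriality of the Albanese morphism furnishes a commutative square $i_\ast \circ \mathrm{alb}_H = \mathrm{alb}_X \circ i_\ast$ between the Chow groups of cycles algebraically equivalent to zero and the Albanese varieties. Since $\alpha \in T(X)_{\mathbb{Q}} = \ker(\mathrm{alb}_X \otimes \mathbb{Q})$, we obtain $i_\ast(\mathrm{alb}_H(\alpha_H)) = \mathrm{alb}_X(\alpha) = 0$. Because $\dim X = d \geq 4 \geq 3$, Grothendieck's Lefschetz theorem for Picard varieties (SGA 2) yields an isomorphism $\mathrm{Pic}^0(X) \cong \mathrm{Pic}^0(H)$, which dualizes to an isogeny $\mathrm{Alb}(H) \to \mathrm{Alb}(X)$; after tensoring with $\mathbb{Q}$ this map becomes an isomorphism on $k$-points, and in particular is injective. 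Consequently $\mathrm{alb}_H(\alpha_H) = 0$, i.e.\ $\alpha_H \in T(H)_{\mathbb{Q}}$, proving (2).

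The main obstacle is the Bertini step: one must guarantee that the linear system of hyperplanes through the prescribed finite set $S$ still has enough freedom to cut $X$ in a smooth divisor passing through every $p_i$. This is routinely achieved by passing to a sufficiently ample Veronese re-embedding so that the sub-linear-system of hyperplanes through $S$ is base-point-free outside $S$ and contains transverse elements at each $p_i$; once this technical point is in place, the rest of the argument is essentially formal.
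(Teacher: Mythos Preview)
Your proposal is correct and follows essentially the same route as the paper: a Bertini-type result to produce a smooth hyperplane section $H$ containing the support of a representative of $\alpha$ (the paper invokes the Kleiman--Altman Bertini theorem for hypersurface sections through a prescribed subscheme, together with a positive-characteristic variant, in place of your Veronese sketch), followed by the Grothendieck--Lefschetz theorem from SGA~2 to obtain an isomorphism $i^{\ast}\colon CH^{1}(X)\xrightarrow{\cong}CH^{1}(H)$, which is then dualized via $\mathrm{Pic}^{0}$ to see that $\mathrm{Alb}(i)$ is an isogeny and hence kills nothing after $\otimes\,\mathbb{Q}$. The only point you leave implicit that the paper makes explicit is that $\alpha_{H}$ has degree zero (so that $\mathrm{alb}_{H}(\alpha_{H})$ is defined); this is immediate from $\deg\alpha=0$.
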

\begin{proof}
Combining 
\cite{MR529493}*{Thm. 7}, \cite{MR2620980}
 and \cite{MR2171939}*{XI, Thm. 3.18},
we conclude that there exist a smooth hyperplane section 
$i:H\rightarrow X$ and $\alpha _H \in CH^{d-1}(H)_{\mathbb Q}$
such that $i_{\ast}(\alpha _H)=\alpha$ and
$i^{\ast}:CH^1(X)\stackrel{\cong}{\rightarrow} CH^1(H)$ is an isomorphism.

We observe that the degree of $\alpha \in CH^d (X)_{\mathbb Q}$ 
is zero \ref{rmk.car.vanpair1}\eqref{rmk.car.vanpair1.a}, so
the degree of $\alpha _H \in CH^{d-1}(H)_{\mathbb Q}$ is also zero.
Hence, it is enough to show that the induced map
$Alb(i):Alb(H)\rightarrow Alb(X)$ between the Albanese varieties
is an isogeny.

We fix a closed point $x_0 \in H(k)$, and consider the following 
commutative diagram: 
\begin{align*}
\xymatrix{H \ar[r]^-i \ar[d]_-{alb_H}& X \ar[d]^-{alb_X} \\
	Alb(H) \ar[r]_-{Alb(i)} & Alb(X) }
\end{align*}
where $alb_H$, $alb_X$ are the canonical maps
into the Albanese varieties such that $alb_H(x_0)=0$ and
$alb_X(x_0)=0$.  Now, consider the induced map on the dual
abelian varieties:
\begin{align*}
Alb(i)^t: \widehat{Alb(X)}\cong Pic^0(X) \rightarrow\widehat{Alb(H)}
\cong Pic^0(H)
\end{align*}
where $Pic^0 X$, $Pic^0 H$ are the Picard varieties of
$X$, $H$, respectively.  By \cite{MR106225}*{p.152}, if 
$x\in Pic ^0{X}(k)$ classifies $\mathcal L_x \in CH^1(X)$, then
 $Alb(i)^t(x)\in Pic ^0(H)(k)$ classifies $i^\ast (\mathcal L _x) \in
 CH^1(Y)$.  So, we deduce that $Alb(i)^t$ is an isogeny since
 $i^{\ast}:CH^1(X)\stackrel{\cong}{\rightarrow} CH^1(H)$ 
 is an isomorphism.
 
 Thus, by \cite{MR106225}*{p. 125, Prop. 2} we conclude
 that $Alb(i)$ is an isogeny, which finishes the proof.
\end{proof}

\begin{prop}  \label{prop.3fold}
With the notation and conditions of \eqref{subsec.zerocyc}.
Assume further
that $d=\mathrm{dim}\; X =3$ and that the base field
$k$ is not the algebraic closure of a finite field.
Then there exists a smooth hyperplane section
$i:H\rightarrow X$ such that the following conditions hold:
\begin{enumerate}
\item There exists $\alpha _H \in CH^{2}(H)_{\mathbb Q}$
such that $i_{\ast}(\alpha _H)=\alpha \in CH^3 (X)_{\mathbb Q}$, and
\item $\alpha _H \in T(H)_{\mathbb Q}\subseteq CH^{2}(H)_{\mathbb Q}$,
the Albanese kernel of $H$.
\end{enumerate}
\end{prop}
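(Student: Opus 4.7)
The proof strategy parallels that of \eqref{prop.zercyd4}, with a modification in the step establishing the isogeny between Albanese varieties.

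First, applying the same combination of \cite{MR529493}*{Thm.~7}, \cite{MR2620980} and \cite{MR2171939}*{XI, Thm.~3.18} as in the proof of \eqref{prop.zercyd4}, I would produce a smooth hyperplane section $i: H \hookrightarrow X$ and a class $\alpha_H \in CH^2(H)_{\mathbb Q}$ with $i_\ast(\alpha_H) = \alpha$; this gives condition (1). The hypothesis excluding the algebraic closure of a finite field enters precisely at this spreading step: the Bloch--Srinivas-type argument that lifts a zero-cycle to a hyperplane section requires a sufficiently rich moduli of hyperplane sections, which is not available over $\overline{\mathbb F_q}$.

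For condition (2), following the template of \eqref{prop.zercyd4}, the degree of $\alpha$ vanishes by \ref{rmk.car.vanpair1}\eqref{rmk.car.vanpair1.c} (since $\alpha \in T(X)_{\mathbb Q}$), and hence so does the degree of $\alpha_H$. Fixing a base point $x_0 \in H(k)$ to normalize the Albanese maps, the commutative square
\[
\xymatrix{H \ar[r]^-i \ar[d]_-{alb_H} & X \ar[d]^-{alb_X} \\
Alb(H) \ar[r]_-{Alb(i)} & Alb(X)}
\]
combined with $alb_X(\alpha) = 0$ reduces the problem, after tensoring with $\mathbb Q$, to showing that $Alb(i)$ is an isogeny. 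The argument used for $d \geq 4$ is unavailable here, because when $\dim X = 3$ the map $i^\ast: CH^1(X) \to CH^1(H)$ need not be an isomorphism (the N\'eron--Severi parts typically disagree). Instead, I would appeal to weak Lefschetz directly on $\ell$-adic $H^1$: for $\ell$ coprime to the characteristic of $k$, $H^1_{et}(X, \mathbb Q_\ell) \to H^1_{et}(H, \mathbb Q_\ell)$ is an isomorphism since $\dim X = 3 > 2$. Equivalently, $Pic^0(i): Pic^0(X) \to Pic^0(H)$ is an isogeny, and dualizing gives the required isogeny $Alb(i)$.

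The main obstacle I anticipate is the spreading step itself: verifying that the cited results from \cite{MR529493}, \cite{MR2620980}, \cite{MR2171939} do combine to produce the lift $\alpha_H$ in dimension $3$ under only the field hypothesis stated. The fact that this proposition must exclude $\overline{\mathbb F_q}$ where \eqref{prop.zercyd4} does not is precisely the signal that dimension $3$ is the marginal case at which the spreading technique still applies.
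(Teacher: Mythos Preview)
Your argument for condition (2) via weak Lefschetz on $\ell$-adic $H^1$ is correct and takes a genuinely different route from the paper's. The paper does not use SGA2 here; it replaces \cite{MR2171939}*{XI, Thm.~3.18} by a Noether--Lefschetz result \cite{MR4739668}*{Thm.~1.1}, which produces a hyperplane section $H$ with $i^\ast\colon CH^1(X)_{\mathbb Q}\xrightarrow{\cong} CH^1(H)_{\mathbb Q}$, and then runs the $Pic^0$/Albanese argument of \eqref{prop.zercyd4} verbatim. Your approach is more direct: weak Lefschetz for $H^1_{\mathrm{et}}$ holds for \emph{every} smooth hyperplane section of a threefold, so any smooth $H$ supporting a lift of $\alpha$ already has $Alb(i)$ an isogeny.

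However, your explanation of where the field hypothesis enters is wrong. The spreading step---lifting $\alpha$ to some $\alpha_H$ on a smooth hyperplane section---comes from \cite{MR529493}*{Thm.~7} and \cite{MR2620980} and works over any algebraically closed field; this is exactly why \eqref{prop.zercyd4} invokes the same two references with no restriction on $k$. In the paper's proof, the hypothesis that $k$ is not the algebraic closure of a finite field is needed only for the Noether--Lefschetz input \cite{MR4739668}, since over such a field there is no ``very general'' hyperplane. Your weak Lefschetz argument uses no such hypothesis at all, so as written your proof actually establishes the proposition without it---slightly more than the paper claims (the paper handles that field by an entirely separate method in \eqref{main.thm1} anyway, so nothing is lost).
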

\begin{proof}
Combining
\cite{MR529493}*{Thm. 7}, \cite{MR2620980}
 with \cite{MR4739668}*{Thm. 1.1},
we conclude that  there exist a smooth hyperplane section 
$i:H\rightarrow X$ and $\alpha _H \in CH^{2}(H)_{\mathbb Q}$
such that $i_{\ast}(\alpha _H)=\alpha$ and
$i^{\ast}:CH^1(X)_{\mathbb Q}
\stackrel{\cong}{\rightarrow} CH^1(H)_{\mathbb Q}$ is an isomorphism.
Then we conclude by an argument parallel
to the proof in \eqref{prop.zercyd4}.
\end{proof}

\begin{prop}  \label{prop.surface}
With the notation and conditions of 
\eqref{subsec.zerocyc}. 
Let $X\in SmProj _k$ be a surface.
If $\alpha \in CH^2(X)_{\mathbb Q}$ satisfies \eqref{vanishing.pair1}, then
 $\alpha \in F^2 CH^2(X)_{\mathbb Q}$.
\end{prop}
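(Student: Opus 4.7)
Since $k$ is algebraically closed and $\alpha$ satisfies \eqref{vanishing.pair1}, \ref{rmk.car.vanpair1}\eqref{rmk.car.vanpair1.c} identifies $\alpha$ with an element of the Albanese kernel $T(X)_{\mathbb Q}$. Because $k=\bar{k}$ and $X$ is a smooth projective surface, every degree-zero zero-cycle is algebraically trivial (any two closed points can be connected through a smooth curve), so $\alpha\in CH^{2}_{\mathrm{alg}}(X)_{\mathbb Q}$, and \eqref{lem.algeq.vanpair2} yields that $\alpha$ also satisfies \eqref{vanishing.pair2}. Both vanishing pairings are thus available. The plan is to exploit these to verify $f_{-1}(\alpha(-2)[-4])=0$ in $DM_{k}$ via \eqref{rmk.orth.crit}.

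By \eqref{rmk.orth.crit}, it suffices to show that the composition $\alpha(-2)[-4]\circ\epsilon_{-1}^{M(X)(-2)[-4]}\colon f_{-1}(M(X)(-2)[-4])\to \sphere_{R}$ vanishes in $DM_{k}$. Since $f_{-1}(M(X)(-2)[-4])\in DM^{\mathrm{eff}}_{k}(-1)$ and the latter is compactly generated by $\{M(Y)(r) : Y\in Sm_{k},\ r\geq-1\}$, this amounts to the vanishing of the pre-composition with every map from such a generator $M(Y)(r)[a]$. For $r\geq 1$ the Hom $\Hom_{DM_{k}}(M(Y)(r)[a],\sphere_{R})$ already vanishes (higher Chow groups in negative codimension). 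For $r=0$ only $a=0$ contributes, and there Lieberman's lemma \eqref{prop.Lieb-lem} identifies the composition with the degree of $\alpha$, which is zero on $T(X)_{\mathbb Q}$. In the critical range $r=-1$, by \eqref{prop.vanpair-vancomp}, the compositions at $a=-2$ with $Y\in SmProj_{k}$ and at $a=-1$ with $Y\in Sm_{k}$ are controlled precisely by \eqref{vanishing.pair1} and \eqref{vanishing.pair2}, respectively.

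The main obstacle is addressing the residual cases with $r=-1$: extending \eqref{vanishing.pair1} from $SmProj_{k}$ to arbitrary $Y\in Sm_{k}$, and handling the higher-Chow pairings $CH^{1}(Y,2+a)_{\mathbb Q}$ for $a\geq 0$. The former is expected to follow from a compactification of $Y$ to a smooth projective variety together with \eqref{lem.fieldext.comp}, reducing to the case already controlled by \eqref{vanishing.pair1}. The latter is the genuine surface-specific heart of the proof: via the Poincar\'e duality isomorphism $M(X)(-2)[-4]\cong M(X)^{\vee}$ in $DM_{k}$, the bounded slice structure of the dual motive of the surface should restrict which bidegrees of maps $M(Y)(-1)[a]\to M(X)^{\vee}$ carry nontrivial classes, so that the combination of \eqref{vanishing.pair1} and \eqref{vanishing.pair2} is sufficient to force the vanishing of the composition on the full generating family, completing the argument.
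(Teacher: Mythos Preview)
Your strategy has a genuine logical gap at its core. You assert that the vanishing of $\alpha(-2)[-4]\circ\epsilon_{-1}^{M(X)(-2)[-4]}$ ``amounts to'' the vanishing of its precomposition with every map from a compact generator $M(Y)(r)[a]$, $r\geq -1$. This inference is not valid: in a compactly generated triangulated category the compact generators detect zero \emph{objects}, not zero \emph{morphisms}. A map can induce zero on $\Hom(G,-)$ for every compact $G$ and still be nonzero (a phantom map). The step would be legitimate if $f_{-1}(M(X)(-2)[-4])$ were itself compact, but $f_{-1}$ does not preserve compactness, and you have not argued this. Moreover, your final paragraph is a plan rather than a proof: ``is expected to follow'' and ``should restrict'' flag precisely the points where the surface hypothesis must do real work, and neither the extension of \eqref{vanishing.pair1} to non-projective $Y$ (smooth compactifications are not available in positive characteristic, and \eqref{lem.fieldext.comp} concerns field extensions, not open immersions) nor the control of the remaining bidegrees is actually carried out.

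The paper's proof is structural and avoids both problems. It invokes the refined Chow--K\"unneth decomposition of Murre and Kahn--Murre--Pedrini for a surface,
\[
M(X)_{\mathbb Q}\;\cong\; M_0(X)\oplus M_1(X)\oplus M_2^{\mathrm{alg}}(X)\oplus t_2(X)\oplus M_3(X)\oplus M_4(X),
\]
and uses that a class in the Albanese kernel factors through the transcendental summand $t_2(X)$. The decisive input is the vanishing $\inthomeff(\sphere_{\mathbb Q}(1),t_2(X))\cong 0$ \cite{MR2187153}*{Thm.~14.8.4(b)}, which by \cite{MR2600283}*{Lem.~5.9}, \cite{MR2249535}*{Prop.~1.1} gives $f_1(t_2(X))\cong 0$ and hence $f_{-1}(t_2(X)(-2)[-4])\cong 0$. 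The composition is then zero because its source is zero --- no generator-by-generator analysis and no phantom-map issue.
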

\begin{proof}
By \ref{rmk.car.vanpair1}\eqref{rmk.car.vanpair1.c}, we
observe that $\alpha \in T(X)_{\mathbb Q}$, 
the Albanese kernel of $X$.
Now, \cite{MR1061525}*{Thm. 3} and \cite{MR2187153}*{Prop. 14.2.3}
imply that
$M(X)_{\mathbb Q}$ splits as a direct sum in $\DMeff{k}$:
\begin{align*}
M(X)_{\mathbb Q} \cong M_0 (X)\oplus M_1 (X)\oplus
M_2^\mathrm{alg}(X) \oplus t_2(X) \oplus M_3 (X) \oplus M_4(X).
\end{align*}
and it also follows from \cite{MR2187153}*{Prop. 14.2.3} that
$\alpha _{\mathbb Q}: M(X)_{\mathbb Q} 
\rightarrow \sphere _{\mathbb Q} (2)[4]$ factors as:
\begin{align*}
\xymatrix{M(X)_{\mathbb Q} 
\ar[r]^-{\alpha _{\mathbb Q}} \ar[d]_-{\pi}& \sphere _{\mathbb Q} (2)[4]\\
t_2 (X) \ar[ur]_-{\alpha _{AJ}}&}
\end{align*}
where $\pi$ is the projection induced by the splitting of 
$M(X)_{\mathbb Q}$.

Thus,
combining \cite{MR3614974}*{5.3.2} (see
\ref{diagram.criteria.thm})
and \eqref{prop.vanchancoef}-\eqref{prop.slicechangcof}, 
we deduce that
it suffices to show that $f_{-1}(t_2(X)(-2)[-4])
\cong 0$ in $\DM{k}$.
Now, by \cite{MR3614974}*{3.3.3.(2)}:
\begin{align*}
f_{-1}(t_2(X)(-2)[-4]) \cong (f_1(t_2(X)))(-2)[-4].  
\end{align*}
So,
it is enough to show that $(f_1(t_2(X)[2]))(-1)[-2]\cong 0$ in $\DMeff{k}$,
since
the functor $DM_k\rightarrow DM_k$,
$E\mapsto E(1)[4]$ is triangulated and an equivalence of categories.

On the other hand, by \cite{MR2187153}*{Thm. 14.8.4(b)} we
observe that
\begin{align}  \label{eq.vanalbker}
\inthomeff (\sphere _{\mathbb Q}(1), t_2(X))\cong 0 .
\end{align}
Since
$t_2(X)\cong t_2(X)_{\mathbb Q}$ in 
$\DMeff{k}$, by adjointness and
\eqref{prop.tenscoef}-\eqref{prop.vanchancoef} we deduce that
\eqref{eq.vanalbker} implies that:
$\inthomeff (\sphere (1), t_2(X))\cong 0$ in $\DMeff{k}$,
as well. 

Hence, 
the result follows from
 \cite{MR2600283}*{Lem. 5.9}, \cite{MR2249535}*{Prop. 1.1}:
\begin{align*}
(f_1(t_2(X)[2]))(-1)[-2]
\cong \inthomeff (\sphere (1)[2], t_2(X)[2])
\cong \inthomeff (\sphere (1), t_2(X))
\cong 0.
\end{align*}
\end{proof}

\section{Further reductions}  \label{sec.furthred}

\subsection{}  \label{ss.main.suff}
With the notation and conditions
of \eqref{def.orth.fil.Chow}-\eqref{sss.main.not}.
We will consider
Voevodsky's homotopy $t$-structure
$((\DMeff{k})_{\geq 0}, (\DMeff{k})_{\leq 0})$
in $\DMeff{k}$ \cite{MR1764202}*{p. 11}.  
We will follow the homological notation for $t$-structures
\cite{MR2438151}*{\S 2.1.3},
\cite{MR2735752}*{\S 1.3}, and
write $\tau _{\geq m}$, $\tau _{\leq m}$ for the
truncation functors and
$\mathbf{h}_m=[-m](\tau _{\leq m}\circ\tau_{\geq m})$.
Let $\HINST{k}$ denote the abelian category of
homotopy invariant Nisnevich sheaves with transfers
on $Sm_k$, which
is the heart of the homotopy $t$-structure in $\DMeff{k}$.
Given a map $f$ in $\HINST{k}$, we will write
$Ker (f), Coker(f) \in \HINST{k}$ for the kernel of $f$
and the cokernel of $f$, respectively.

\subsubsection{}  \label{sss.tensprod}
We will only consider tensor products in $\DMeff{k}$.

\subsubsection{}  \label{not.inthom}
To simplify the notation, we will write
$\varphi _s:\DMeff{k}\rightarrow \DMeff{k}$, $s\geq 0$ for the
triangulated functor $E\mapsto \inthomeff(\sphere (s)[2s], E)$,
$E\in \DMeff{k}$ \eqref{subsec.zerocyc}.

\begin{prop}  \label{prop.Chow-inthom}
Let $W\in SmProj_k$.  Then, for
every $Y\in Sm_k$, $r\in \mathbb Z$
there is a natural isomorphism \eqref{not.inthom}:
\begin{align*}
\Hom _{\DMeff{k}}(M(Y)[r], \varphi _{s}(M(W)))
 \cong CH^{d-s}(W\times Y,r), s\geq 0.
\end{align*}
\end{prop}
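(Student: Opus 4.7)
The plan is a four-step chain of standard identifications: tensor-hom adjunction, cancellation, Poincar\'e duality, and Bloch's comparison between motivic cohomology and higher Chow groups \eqref{eq.Chow.equal.mot}. No essentially new input is needed beyond what is already collected in \S\ref{sec.preel}.

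First, applying the internal tensor-hom adjunction in the closed symmetric monoidal category $\DMeff{k}$ to the definition of $\varphi_s$ \eqref{not.inthom} yields
\begin{align*}
\Hom_{\DMeff{k}}(M(Y)[r], \varphi_{s}(M(W)))
&= \Hom_{\DMeff{k}}(M(Y)[r], \inthomeff(\sphere(s)[2s], M(W))) \\
&\cong \Hom_{\DMeff{k}}(M(Y)(s)[r+2s], M(W)).
\end{align*}
Second, by cancellation \eqref{subsubsec.cancel}, the right-hand side coincides with $\Hom_{\DM{k}}(M(Y)(s)[r+2s], M(W))$. Third, since $W$ is smooth projective of dimension $d$, the motive $M(W)$ is strongly dualizable in $\DM{k}$ with $M(W)^{\vee} \cong M(W)(-d)[-2d]$ (\cite{MR1764202}*{Thm. 4.3.7}, \cite{MR2399083}*{Prop. 6.7.1}), so Poincar\'e duality combined with $M(Y)\otimes M(W) \cong M(W\times Y)$ converts this Hom-group into
\begin{equation*}
\Hom_{\DM{k}}(M(W\times Y)(s-d)[r+2s-2d], \sphere).
\end{equation*}
Finally, applying \eqref{eq.Chow.equal.mot} with $X = W\times Y$, codimension index $d-s$, and degree index $2d - 2s - r$ identifies this last group with $CH^{d-s}(W\times Y, 2(d-s) - (2d-2s-r)) = CH^{d-s}(W\times Y, r)$. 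Naturality in $Y$ follows from the naturality of each of the four intermediate steps.

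There is essentially no hard part: each step is either a formal adjunction, the cancellation theorem, or a citation of Voevodsky's Poincar\'e duality for smooth projective motives. The only care required is bookkeeping, namely tracking the interaction of the Tate twist $(s)$ introduced by $\inthomeff(\sphere(s)[2s], -)$ with the dualizing shift $(-d)[-2d]$ coming from $M(W)^{\vee}$, and verifying that the indices produced by \eqref{eq.Chow.equal.mot} collapse to the claimed codimension $d-s$ and weight $r$ (and not $2d-2s-r$).
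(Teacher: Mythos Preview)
Your proof is correct and follows the same approach as the paper: the paper's proof simply says the result follows ``by adjointness and combining Poincar\'e duality \cite{MR1764202}*{Thm. 4.3.7}, \cite{MR2399083}*{Prop. 6.7.1 and \S 6.7.3} with \cite{MR1883180},'' and your four steps (adjunction, cancellation, duality, comparison with higher Chow groups) are exactly an unpacking of this. The only difference is that you make the cancellation step and the index bookkeeping explicit, which the paper leaves implicit.
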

\begin{proof}
In effect, this follows
by adjointness and combining
 Poincar\'e duality 
\cite{MR1764202}*{Thm. 4.3.7},
\cite{MR2399083}*{Prop. 6.7.1 and \S 6.7.3} with
\cite{MR1883180}. 
\end{proof}

\subsubsection{}   \label{sss.Chow-inthom}
By \eqref{prop.Chow-inthom},
we deduce that $\varphi _s(M(W))\in (\DMeff{k})_{\geq 0}$,
$W\in SmProj_k$, $s\geq 0$.  
Then, we obtain the following distinguished triangle in
$\DMeff{k}$:
\begin{align*}
\xymatrix{\tau _{\geq 1} \varphi _{s}(M(W)) \ar[r]^-{t_1}&
\tau _{\geq 0} \varphi _{s}(M(W)) \cong \varphi _{s}(M(W)) 
\ar[r]^-{\sigma _0}&
\mathbf{h}_0 \varphi _{s}(M(W))}
\end{align*}

\begin{rmk}   \label{rmk.Chow-inthom}
It follows from \cite{MR2249535}*{Rmk. 2.3} and \eqref{prop.Chow-inthom}
that the Nisnevich sheaf with transfers $\mathbf{h}_0 \varphi _{s}(M(W))$
is birational, and by the localization sequence for the Chow groups
we deduce that the map induced by $\sigma _0$ is surjective
for every $Y\in Sm_k$:
\begin{align*}
\xymatrix{\Hom _{\DMeff{k}}(M(Y), \varphi _{s}(M(W))) 
\ar[r]^-{\sigma _{0\ast}}&
\Hom _{\DMeff{k}}(M(Y),\mathbf{h}_0 \varphi _{s}(M(W)))\ar[r] &0}
\end{align*}
We observe that $\sigma _{0\ast}$
is the canonical map from the presheaf:
\begin{align*}
Y\in Sm_k \mapsto \Hom _{\DMeff{k}}
(M(Y),\varphi _{s}(M(W)))=CH^{d-s}(W\times Y)
\end{align*}
to its associated Nisnevich sheaf
$\mathbf{h}_0 \varphi _{s}(M(W)):Y\in Sm_k \mapsto
CH^{d-s}(W_{k(Y)})$.
\end{rmk}

\subsubsection{}  \label{sss.inthom.effcov}
Combining 
 \cite{MR2600283}*{Lem. 5.9}, \cite{MR2249535}*{Prop. 1.1}
with \cite{MR3614974}*{3.3.3.(2)} 
we conclude that for $n\geq 1$:
\begin{align*}
   f_{-1}(M(X)(-n)[-2n]) \cong f_{n-1}(M(X))(-n)[-2n]\\
   \cong \inthomeff(\sphere (n-1)[2n-2], M(X))(-1)[-2]
   &=\varphi _{n-1}(M(X)) (-1)[-2]
\end{align*}

\subsubsection{}  \label{sss.DMeff.van.equiv}
Consider the diagram \eqref{diagram.criteria.thm}.  To simplify
the notation,
let 
\begin{align*}
\alpha ^{(1)}
=(\alpha (-n)[-2n]\circ \epsilon _{-1}^{M(X)(-n)[-2n]})(1)[2]:\varphi _{n-1}
(M(X))
\rightarrow  \sphere _R (1)[2],
\end{align*}
which is a map in $\DMeff{k}$
\eqref{subsubsec.cancel}, and
consider the following  diagram in $\DMeff{k}$:
\begin{align}  \label{diagram.1hom.vanpair2}
\begin{split}
   \xymatrix@C=1.2pc{&& (\mathbf{h}_1 \varphi _{n-1}(M(X)))[1] \ar@{-->}  `l[lldd]_-{\alpha _1 ^{(1)}} 
   `[lddd] [lddd]& 
   \mathbf{h}_0 \varphi _{n-1}(M(X)) \ar@{-->} `[ddr] `[ddd] [dddll]^-{\alpha _0 ^{(1)}}&\\
   &\tau _{\geq 2}\varphi _{n-1}(M(X)) \ar[r]^-{t_2} & 
   \tau _{\geq 1}\varphi _{n-1}(M(X))
   \ar[r]^-{t_1} \ar[u]^-{\sigma _1}& \tau _{\geq 0} \varphi _{n-1}(M(X))  \ar[u]^-{\sigma _0} \ar[ddll]^-{\alpha ^{(1)}} \ar@{=}[d]&\\
   &&&\varphi _{n-1}(M(X))&\\
   &\sphere _R (1)[2]&&& }
\end{split}
\end{align}
where $\tau _{\geq i+1}\varphi _{n-1}(M(X)) 
\stackrel{t_{i+1}}{\longrightarrow} 
\tau _{\geq i} \varphi _{n-1}(M(X))
\stackrel{\sigma _i}{\longrightarrow} (\mathbf{h}_i \varphi _{n-1}(M(X)))[i]$ are
distinguished triangles in $\DMeff{k}$ for
$i=0$, $1$ and the isomorphism $\tau _{\geq 0} \varphi _{n-1}(M(X))
\cong  \varphi _{n-1}(M(X))$ follows from
\eqref{sss.Chow-inthom}.

\begin{prop}  \label{prop.1hom.vanpair2}
Let $k$ be an arbitrary base field, and with the notation and conditions
of \eqref{def.orth.fil.Chow}-\eqref{sss.main.not}, \eqref{ss.main.suff},
\eqref{not.inthom},
\eqref{sss.inthom.effcov}-\eqref{diagram.1hom.vanpair2}.  Assume
further that $R$ is flat over $\mathbb Z$ \eqref{coeffs.not}.   Then, 
\begin{enumerate}
\item \label{prop.1hom.vanpair2.a}
there exists a unique map $\alpha _1 ^{(1)}: (\mathbf{h}_1
\varphi _{n-1}(M(X)) )[1]
\rightarrow \sphere _R (1)[2]$ in $\DMeff{k}$ such that
$\alpha _1 ^{(1)}\circ \sigma _1 =\alpha ^{(1)}\circ t_1$
in \eqref{diagram.1hom.vanpair2}.
\item  \label{prop.1hom.vanpair2.b}
assume further that $\alpha \in CH^n(X)_R$
satisfies \eqref{vanishing.pair2}.  Then, the map
$\alpha _1 ^{(1)}=0$ in
\ref{prop.1hom.vanpair2}\eqref{prop.1hom.vanpair2.a} and
there exists a unique map $\alpha _0 ^{(1)}: \mathbf{h}_0
\varphi _{n-1}(M(X)) \rightarrow \sphere _R (1)[2]$ in $\DMeff{k}$ such that
$\alpha _0 ^{(1)}\circ \sigma _0 =\alpha ^{(1)}$
in \eqref{diagram.1hom.vanpair2}.
\end{enumerate}
\end{prop}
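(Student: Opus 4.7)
The plan is to handle both parts by reducing to Hom-vanishing in the homotopy $t$-structure, with the substantial content being the extraction of $\alpha_1^{(1)}=0$ from the hypothesis \eqref{vanishing.pair2}.

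For part \eqref{prop.1hom.vanpair2.a}, I would apply the contravariant functor $\Hom_{\DMeff{k}}(-,\sphere_R(1)[2])$ to the distinguished triangle
\[
\tau_{\geq 2}\varphi_{n-1}(M(X)) \xrightarrow{t_2} \tau_{\geq 1}\varphi_{n-1}(M(X)) \xrightarrow{\sigma_1} \mathbf{h}_1\varphi_{n-1}(M(X))[1].
\]
Since $\sphere_R(1)[2]\simeq(\mathbb{G}_m\otimes R)[1]$ lies in $(\DMeff{k})_{\leq 1}$, while $\tau_{\geq 2}\varphi_{n-1}(M(X))$ and its shift by $[1]$ lie in $(\DMeff{k})_{\geq 2}$ and $(\DMeff{k})_{\geq 3}$ respectively, the $t$-structure axioms force the vanishing of both Hom groups flanking $\sigma_1^{*}$. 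Thus $\sigma_1^{*}$ is an isomorphism and $\alpha^{(1)}\circ t_1$ has a unique preimage $\alpha_1^{(1)}$. Applying the same functor to the triangle $\tau_{\geq 1}\varphi_{n-1}(M(X)) \xrightarrow{t_1} \varphi_{n-1}(M(X)) \xrightarrow{\sigma_0} \mathbf{h}_0\varphi_{n-1}(M(X))$ yields injectivity of $\sigma_0^{*}$ (hence uniqueness of $\alpha_0^{(1)}$), and reduces the existence of $\alpha_0^{(1)}$ in part \eqref{prop.1hom.vanpair2.b} to the single condition $\alpha^{(1)}\circ t_1=0$, which by part \eqref{prop.1hom.vanpair2.a} and the isomorphism $\sigma_1^{*}$ is equivalent to $\alpha_1^{(1)}=0$.

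The main obstacle is therefore to prove $\alpha_1^{(1)}=0$ from \eqref{vanishing.pair2}. Since both source and target of $\alpha_1^{(1)}$ lie in $(\DMeff{k})_{\geq 1}\cap(\DMeff{k})_{\leq 1}$ (the heart of the homotopy $t$-structure shifted by one), the desuspension $\alpha_1^{(1)}[-1]$ defines a morphism of homotopy invariant Nisnevich sheaves with transfers $\mathbf{h}_1\varphi_{n-1}(M(X)) \to \mathbb{G}_m\otimes R$, and it suffices to check it vanishes on every Nisnevich stalk. At a Henselian local ring $\mathcal{O}^h_{Y,y}$ with $Y\in Sm_k$, positive-degree Nisnevich cohomology vanishes, so the descent spectral sequence for $\varphi_{n-1}(M(X))$ collapses and, combined with \eqref{prop.Chow-inthom}, identifies
\[
\mathbf{h}_1\varphi_{n-1}(M(X))(\mathcal{O}^h_{Y,y}) \;\cong\; \colim_{U} CH^{d-n+1}(X\times U, 1)_R,
\]
with $U$ ranging over étale neighborhoods of $y$. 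A representative $\beta\colon M(U)[1]\to\varphi_{n-1}(M(X))$ factors uniquely through $\tau_{\geq 1}$ as $\beta=t_1\circ\widetilde{\beta}$ (since $M(U)[1]\in(\DMeff{k})_{\geq 1}$), and $\sigma_1\circ\widetilde{\beta}$ is by construction the class of $\beta$ in the stalk of $\mathbf{h}_1\varphi_{n-1}(M(X))$. Using part \eqref{prop.1hom.vanpair2.a} and Lieberman's lemma \eqref{prop.Lieb-lem},
\[
\alpha_1^{(1)}\circ \sigma_1\circ \widetilde{\beta} \;=\; \alpha^{(1)}\circ t_1\circ \widetilde{\beta} \;=\; \alpha^{(1)}\circ \beta
\]
corresponds to $\pi_{U*}((\pi_X^{*}\alpha)\cdot \beta)\in CH^1(U,1)_R\cong\Gamma(U,\mathcal{O}_U^{*})_R$, which vanishes by \eqref{vanishing.pair2}. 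Hence $\alpha_1^{(1)}[-1]$ is zero on every stalk, so $\alpha_1^{(1)}=0$.
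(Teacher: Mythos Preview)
Your proposal is correct and follows essentially the same line as the paper's proof. Part~\eqref{prop.1hom.vanpair2.a} and the uniqueness and existence reductions for $\alpha_0^{(1)}$ are identical to the paper's argument (the paper also uses that $\sphere_R(1)[2]\cong(\mathcal O^\ast\otimes R)[1]$ lies in $(\DMeff{k})_{\leq 1}$, invoking flatness of $R$ to put $\mathcal O^\ast\otimes R$ in the heart). For the key step $\alpha_1^{(1)}=0$, the paper observes that $\sigma_{1\ast}$ identifies $\Gamma(Y,\mathbf h_1\varphi_{n-1}(M(X)))$ with the Nisnevich sheafification of the presheaf $Y\mapsto \Hom_{\DMeff{k}}(M(Y)[1],\tau_{\geq 1}\varphi_{n-1}(M(X)))$, and then concludes from \eqref{prop.vanpair-vancomp}; your stalk computation at Henselian local points via the collapsed descent spectral sequence is the same statement viewed pointwise, and your direct appeal to Lieberman's lemma \eqref{prop.Lieb-lem} is precisely what underlies \eqref{prop.vanpair-vancomp}. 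Two cosmetic slips: the colimit should be over Nisnevich (not merely \'etale) neighborhoods, and the groups $CH^{d-n+1}(X\times U,1)$ carry no $R$ (the source $\varphi_{n-1}(M(X))$ has integral coefficients; only the target carries $R$).
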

\begin{proof}
\eqref{prop.1hom.vanpair2.a}:  We observe that
$\sphere _R (1)[2] \cong (\mathcal O ^\ast  \otimes R) [1]$
by \cite{MR1764202}*{Thm. 3.4.2}, \cite{MR2242284}*{4.1},
\cite{MR2399083}*{3.2}.  Then,
since $\mathcal O^{\ast}$ is in the heart
of the homotopy $t$-structure and $R$ is a flat over $\mathbb Z$,
 we deduce that
$\mathcal O ^\ast  \otimes R$ (see \ref{sss.tensprod}) 
is also in the heart of the
homotopy $t$-structure.  Hence,
$\sphere _R (1)[2] \in (\DMeff{k})_{\leq 1}$ which implies:
\begin{align*}
0 &=\Hom _{\DMeff{k}}(\tau _{\geq 2}\varphi _{n-1}(M(X)) , \sphere _R (1)[2])
 \\
&=\Hom _{\DMeff{k}}((\tau _{\geq 2}\varphi _{n-1}(M(X)) )[1] , \sphere _R (1)[2])
\end{align*}
Then, the result follows since
\begin{align*}
\tau _{\geq 2}\varphi _{n-1}(M(X)) \stackrel{t_{2}}{\longrightarrow} 
\tau _{\geq 1} \varphi _{n-1}(M(X))
\stackrel{\sigma _1}{\longrightarrow} (\mathbf{h}_1 \varphi _{n-1}(M(X)) )[1]
\end{align*}
is a distinguished triangle in $\DMeff{k}$.

\eqref{prop.1hom.vanpair2.b}:
First, we show that $\alpha _1 ^{(1)} =0$ \eqref{diagram.1hom.vanpair2}.  Since
$\alpha _1 ^{(1)} [-1]:\mathbf{h}_1 \varphi _{n-1}(M(X))
 \rightarrow \sphere _R (1)[1]
\cong \mathcal O ^\ast \otimes R$ is a map in $\HINST{k}$ 
\eqref{ss.main.suff}, it suffices to see that for every $Y\in Sm_k$,
the map induced by $\alpha _1 ^{(1)}$ is zero:
\begin{align*}
\xymatrix{\Gamma (Y, \mathbf{h}_1 \varphi _{n-1}(M(X)))\cong
\Hom _{\DMeff{k}}(M(Y)[1],(\mathbf{h}_1 \varphi _{n-1}(M(X)))[1]) 
\ar[d]^-{\alpha _{1 \ast}^{(1)}}\\
 \Gamma (Y, \mathcal O ^\ast \otimes R) \cong
\Hom _{\DMeff{k}}(M(Y)[1],\sphere _R (1)[2])}
\end{align*}
We notice that
\begin{align*}
\xymatrix{ \Hom _{\DMeff{k}} (M(Y)[1],
\tau _{\geq 1}\varphi _{n-1}(M(X)))
\ar[d]^-{\sigma _{1\ast}} \\
\Hom _{\DMeff{k}}(M(Y)[1], (\mathbf{h}_1 \varphi _{n-1}(M(X)))[1])}
\end{align*}
is the canonical map from the presheaf
\begin{align*}
Y\in Sm_k \mapsto \Hom _{\DMeff{k}}
(M(Y)[1],\tau _{\geq 1}\varphi _{n-1}(M(X))),
\end{align*}
to its associated Nisnevich sheaf
$\mathbf{h}_1 \varphi _{n-1}(M(X))$.
Hence, we conclude that it suffices to show that for
every $Y\in Sm_k$,
the map
induced by $\alpha _{1}^{(1)} \circ \sigma _1 =
\alpha ^{(1)} \circ t_1$ \eqref{diagram.1hom.vanpair2}
is zero:
\begin{align*}
\xymatrix{
\Hom _{\DMeff{k}}(M(Y)[1],\tau _{\geq 1} \varphi _{n-1}(M(X))) 
\ar[d]^-{(\alpha _{1}^{(1)} \circ \sigma _{1})_{\ast}=
(\alpha ^{(1)} \circ t_{1})_{\ast}}\\
\Hom _{\DMeff{k}}(M(Y)[1],\sphere _R (1)[2])}
\end{align*}
But this follows from \eqref{prop.vanpair-vancomp} and
the definition of $\alpha ^{(1)}$
\eqref{sss.DMeff.van.equiv}; since
the functor $DM_k\rightarrow DM_k$,
$E\mapsto E(1)[2]$ is triangulated and an equivalence of categories.

Therefore,  $\alpha ^{(1)} \circ t_{1}=
\alpha _{1}^{(1)} \circ \sigma _{1}=0$
which implies the
existence of $\alpha _0 ^{(1)}$ since
$\tau _{\geq 1}\varphi _{n-1}(M(X)) \stackrel{t_{1}}{\longrightarrow} 
\tau _{\geq 0} \varphi _{n-1}(M(X))
\stackrel{\sigma _0}{\longrightarrow} \mathbf{h}_0 
\varphi _{n-1}(M(X)) $ is a 
distinguished triangle in $\DMeff{k}$.  To show the
uniqueness of $\alpha _0 ^{(1)}$, it suffices to
see that 
\begin{align*}
\Hom _{\DMeff{k}}((\tau _{\geq 1}\varphi _{n-1}(M(X)))[1] , \sphere _R (1)[2])
=0
\end{align*}
which holds since we have already seen that
$\sphere _R (1)[2] \in (\DMeff{k})_{\leq 1}$.
\end{proof}

\begin{prop}  \label{prop.0-1hom.vanpair1-2}
With the notation and conditions
of \eqref{prop.1hom.vanpair2}.   
Assume further that $\alpha \in CH^n(X)_R$ satisfies
\eqref{vanishing.pair1} and \eqref{vanishing.pair2}.
Then, the following conditions are equivalent:
\begin{enumerate}
\item \label{prop.0-1hom.A}
The map $\alpha _0 ^{(1)} =0$ in 
\ref{prop.1hom.vanpair2}\eqref{prop.1hom.vanpair2.b}.
\item \label{prop.0-1hom.B}
$\alpha \in F^2CH^n(X)_R$.
\end{enumerate}
\end{prop}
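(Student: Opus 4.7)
The plan is to reduce the equivalence $(A)\Leftrightarrow (B)$ to the factorization property established in Proposition \ref{prop.1hom.vanpair2}(\ref{prop.1hom.vanpair2.b}). The first step is to translate condition $(B)$ into a statement purely about the morphism $\alpha^{(1)}$. By Remark \ref{rmk.orth.crit}, $\alpha\in F^{2}CH^{n}(X)_{R}$ if and only if $f_{-1}(\alpha(-n)[-2n])=0$ in $DM_{k}$. Since $\sphere_{R}=M(\mathrm{Spec}\,k)\otimes R$ lies in $\DMeff{k}(-1)$, the counit $\epsilon_{-1}^{\sphere_{R}}:f_{-1}\sphere_{R}\rightarrow \sphere_{R}$ is an isomorphism, and naturality of $\epsilon_{-1}$ identifies $f_{-1}(\alpha(-n)[-2n])$ with the composition $\alpha(-n)[-2n]\circ \epsilon_{-1}^{M(X)(-n)[-2n]}$. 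Applying the shift--twist equivalence $(1)[2]$, condition $(B)$ is therefore equivalent to the vanishing $\alpha^{(1)}=0$ in $\DMeff{k}$ (see \ref{sss.DMeff.van.equiv}).

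Once this reformulation is in hand, both implications become formal consequences of the defining relation $\alpha_{0}^{(1)}\circ \sigma_{0}=\alpha^{(1)}$ furnished by Proposition \ref{prop.1hom.vanpair2}(\ref{prop.1hom.vanpair2.b}). For $(A)\Rightarrow (B)$: if $\alpha_{0}^{(1)}=0$, then $\alpha^{(1)}=\alpha_{0}^{(1)}\circ \sigma_{0}=0$, so the translation above yields $\alpha\in F^{2}CH^{n}(X)_{R}$. For $(B)\Rightarrow (A)$: if $\alpha^{(1)}=0$, then the zero morphism $0:\mathbf{h}_{0}\varphi_{n-1}(M(X))\rightarrow \sphere_{R}(1)[2]$ trivially satisfies $0\circ \sigma_{0}=0=\alpha^{(1)}$, and the uniqueness clause of Proposition \ref{prop.1hom.vanpair2}(\ref{prop.1hom.vanpair2.b}) forces $\alpha_{0}^{(1)}=0$.

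I do not anticipate a substantive obstacle, as both steps are essentially bookkeeping: the translation between the vanishing of $f_{-1}(\alpha(-n)[-2n])$ and that of $\alpha^{(1)}$ uses only the universal property of the $(-1)$-effective cover together with the fact that $\sphere_{R}\in \DMeff{k}(-1)$, while the core of the equivalence is packaged in the existence and uniqueness of $\alpha_{0}^{(1)}$ proved in Proposition \ref{prop.1hom.vanpair2}. The only point worth flagging is that the hypotheses \eqref{vanishing.pair1} and \eqref{vanishing.pair2} must be assumed throughout, since they are needed to invoke Proposition \ref{prop.1hom.vanpair2}(\ref{prop.1hom.vanpair2.b}) in the first place, in particular to guarantee that $\alpha^{(1)}$ factors through $\sigma_{0}$.
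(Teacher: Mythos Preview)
Your proof is correct and follows essentially the same approach as the paper: both directions hinge on the identity $\alpha^{(1)}=\alpha_{0}^{(1)}\circ\sigma_{0}$ from Proposition \ref{prop.1hom.vanpair2}(\ref{prop.1hom.vanpair2.b}) together with its uniqueness clause, and the translation between $\alpha\in F^{2}CH^{n}(X)_{R}$ and $\alpha^{(1)}=0$. The paper cites \cite{MR3614974}*{5.3.2} for this translation, whereas you unpack it via Remark \ref{rmk.orth.crit} and the observation that $\epsilon_{-1}^{\sphere_{R}}$ is an isomorphism; these are the same argument. One minor inaccuracy in your closing remark: Proposition \ref{prop.1hom.vanpair2}(\ref{prop.1hom.vanpair2.b}) only requires condition \eqref{vanishing.pair2}, not \eqref{vanishing.pair1}, so the latter is carried along in the statement of the present proposition but is not actually used in this proof.
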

\begin{proof}
\eqref{prop.0-1hom.A} $\Rightarrow$ \eqref{prop.0-1hom.B}:  
By \ref{prop.1hom.vanpair2}\eqref{prop.1hom.vanpair2.b},
we conclude that $0=\alpha ^{(1)}:\varphi _{n-1}(M(X))
\rightarrow \sphere _R (1)[2]$ \eqref{diagram.1hom.vanpair2}.
So, by definition of $\alpha ^{(1)}$ \eqref{sss.DMeff.van.equiv}:
\begin{align*}
0=\alpha ^{(1)}(-1)[-2]
=\alpha (-n)[-2n]\circ \epsilon _{-1}^{M(X)(-n)[-2n]},
\end{align*}
which is
the composition
in the top row of \eqref{diagram.criteria.thm}.
Then, the result follows from 
\cite{MR3614974}*{5.3.2}.

\eqref{prop.0-1hom.B} $\Rightarrow$ \eqref{prop.0-1hom.A}:
By the uniqueness in  \ref{prop.1hom.vanpair2}\eqref{prop.1hom.vanpair2.b},
it suffices to show that $\alpha ^{(1)}=0$  
\eqref{sss.DMeff.van.equiv}-\eqref{diagram.1hom.vanpair2}.
Now, we observe that
$\alpha \in F^2CH^n(X)_R$, so \cite{MR3614974}*{5.3.2}
implies that: 
$0=\alpha (-n)[-2n]\circ \epsilon _{-1}^{M(X)(-n)[-2n]}$,
which is the composition in the top row of \eqref{diagram.criteria.thm}.
Then, by the
definition of $\alpha ^{(1)}$ \eqref{sss.DMeff.van.equiv}:
\begin{align*}
0 =(\alpha (-n)[-2n]\circ \epsilon _{-1}^{M(X)(-n)[-2n]})(1)[2]
=\alpha ^{(1)},
\end{align*}
which finishes the proof.
\end{proof}

\begin{rmk}  \label{rmk.AbelJacobi}
With the notation and conditions
of \eqref{prop.1hom.vanpair2}.  Let $CH^n_\mathrm{H}(X)_R$
be the $R$-submodule of $CH^n(X)_R$
where
\eqref{vanishing.pair2} holds.  
Recall that $\mathbf{h}_0 
\varphi _{n-1}(M(X))$ \eqref{rmk.Chow-inthom}
is the Nisnevich sheaf with transfers $Y\in Sm_k \mapsto 
CH^{d-n+1}(X_{k(Y)})$.  So, we will write
$\mathcal{CH}^{d-n+1}(X)$  for $\mathbf{h}_0 
\varphi _{n-1}(M(X))$.
Then, combining
\eqref{prop.1hom.vanpair2} and \eqref{prop.0-1hom.vanpair1-2}
we obtain a short exact sequence:
\begin{align*}
\xymatrix@C=1.2pc@R=0.1pc{0 \ar[r]& F^2CH^n(X)_R \ar[r]& 
CH^n_\mathrm{H}(X)_R \ar[r]&
\Hom _{\DMeff{k}}(\mathcal{CH}^{d-n+1}(X), \mathcal O ^\ast \otimes 
R [1])\\
&& \alpha \ar@{|->}[r]& \alpha_0 ^{(1)}}
\end{align*}
which is natural in $X$ with respect to Chow correspondences.

The reader may compare the map $\alpha_0 ^{(1)}$ in
\eqref{diagram.1hom.vanpair2}-\eqref{prop.1hom.vanpair2},
with the extension constructed by Bloch in \cite{MR991974}*{(2.1) and
Prop. 3}.
\end{rmk}

\subsection{}  \label{ss.torsioncoef}
With the notation and conditions
of \eqref{def.orth.fil.Chow}-\eqref{sss.main.not}.
In this section, we assume further that the base field $k$ is perfect
of exponential characteristic $p$.

\subsubsection{}  \label{step1}
Let $Y\in SmProj _k$.  Then, $M(Y)\in (\DMeff{k})_{\geq 0}$ 
\eqref{sss.Chow-inthom}.
Thus, given
$\beta \in
\Hom _{\DMeff{k}}(M(Y), \varphi _{n-1}(M(X)))
 \cong CH^{d-n+1}(X\times Y)$ \eqref{prop.Chow-inthom},
there exists a unique map $\beta _0$ making the following
diagram in $\DMeff{k}$
commute:
\begin{align}  \label{step1.diag}
\begin{split}
\xymatrix{M(Y)\ar[d]_-{\beta} \ar[r]^-{\sigma _Y}
& \mathbf{h}_0 M(Y) \ar[d]^-{\beta _0}\\
\tau _{\geq 0}\varphi _{n-1}(M(X)) \cong \varphi _{n-1}(M(X)) \ar[r]^-{\sigma _0} & \mathbf{h}_0 \varphi _{n-1}(M(X)) }
\end{split}
\end{align}
where $\sigma _0$ is the map in \eqref{diagram.1hom.vanpair2}.

\subsubsection{}  \label{step2}
Let
\begin{align*}
\mathcal P =\bigoplus \limits_{\substack{\beta \in 
	\Hom _{\DMeff{k}}(M(Y), \varphi _{n-1}(M(X)))\\
	Y\in SmProj_k}} \mathbf{h}_0M(Y),
\end{align*}
and consider the map in $\DMeff{k}$ induced by \eqref{step1.diag} on
each direct summand of $\mathcal P$:
\begin{align*}
\xymatrix{ \mathcal P =
\bigoplus \limits_{\substack{\beta \in 
	\Hom _{\DMeff{k}}(M(Y), \varphi _{n-1}(M(X)))\\
	Y\in SmProj_k}}
	 \mathbf{h}_0M(Y) \ar[r]^-{(\beta _0)}& \mathbf{h}_0
	 \varphi _{n-1}(M(X)) }
\end{align*}

\begin{prop}  \label{step2.prop}
With the notation and conditions of \eqref{ss.torsioncoef}.
Then,
the map $(\beta _0)\otimes \mathbb Z [\frac{1}{p}]:
\mathcal P \otimes \mathbb Z [\frac{1}{p}]  
\rightarrow \mathbf{h}_0 \varphi _{n-1}(M(X)) 
\otimes \mathbb Z [\frac{1}{p}]$ (see \ref{sss.tensprod})
is surjective in $\HINST{k}$ \eqref{ss.main.suff}.
\end{prop}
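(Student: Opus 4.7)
The plan is to reduce surjectivity in $\HINST{k}$ to a fiber-by-fiber statement over fields, and then to combine Gabber's prime-to-$\ell$ alterations for all primes $\ell \neq p$ via a $\gcd$/Bezout argument to handle the $p$-part.

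First, I will use that a map of homotopy invariant Nisnevich sheaves with transfers over a perfect field is surjective precisely when it is surjective on sections over every finitely generated field extension $K/k$ (this follows from Voevodsky's Gersten-type resolution for $\HINST{k}$, cf. Mazza--Voevodsky--Weibel). Since $\mathbb Z[\tfrac{1}{p}]$ is flat over $\mathbb Z$, it suffices to show that for every such $K$, the induced map
\[
\bigoplus_{\beta,Y'} CH_0(Y'_K) \otimes \mathbb Z[\tfrac{1}{p}] \longrightarrow CH^{d-n+1}(X_K) \otimes \mathbb Z[\tfrac{1}{p}]
\]
is surjective, where the direct sum is indexed by pairs $(Y',\beta)$ with $Y' \in SmProj_k$ and $\beta \in CH^{d-n+1}(X\times Y')$. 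This uses the identification $\mathbf{h}_0 M(Y')(K) \cong CH_0(Y'_K)$ for $Y' \in SmProj_k$, together with $\mathcal{CH}^{d-n+1}(X)(K) = CH^{d-n+1}(X_K)$ from Remark \ref{rmk.Chow-inthom}.

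Given $\alpha \in CH^{d-n+1}(X_K)$, I would write $K = k(Y_0)$ for some $Y_0 \in Sm_k$ and spread $\alpha$ to $\tilde\alpha \in CH^{d-n+1}(X \times U_0)$ over a nonempty open $U_0 \subseteq Y_0$. Fix a projective compactification $\bar Y_0 \supseteq Y_0$. For each prime $\ell \neq p$, the Gabber--Illusie--Temkin refinement of de~Jong's alteration theorem produces a proper surjective morphism $f_\ell \colon Y'_\ell \to \bar Y_0$ with $Y'_\ell \in SmProj_k$ and generic degree $d_\ell$ coprime to $\ell$. After shrinking $U_0$ if necessary so that $f_\ell$ becomes finite flat over it, form the pullback $(f_\ell)^* \tilde\alpha \in CH^{d-n+1}(X \times U'_\ell)$ with $U'_\ell := f_\ell^{-1}(U_0)$, and extend by Zariski closure to $\beta_\ell \in CH^{d-n+1}(X \times Y'_\ell)$. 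Let $\gamma_\ell \in CH_0(Y'_{\ell,K})$ be the zero-cycle class of the generic fiber $f_\ell^{-1}(\eta_{Y_0}) = \spec{F_\ell}$, and decompose $F_\ell \cong \prod_i K_{\ell,i}$ so that $\gamma_\ell = \sum_i [\spec{K_{\ell,i}}]$ with $\sum_i [K_{\ell,i}:K] = d_\ell$. By Lieberman's lemma \eqref{prop.Lieb-lem} and the construction of $\beta_\ell$, the restriction of $\beta_\ell$ to each closed point $\spec{K_{\ell,i}} \subset \spec{F_\ell}$ is the base change $\alpha_{K_{\ell,i}}$ of $\alpha$, and the pushforward along $X_{K_{\ell,i}} \to X_K$ is multiplication by $[K_{\ell,i}:K]$; summing yields $(\beta_\ell)_0(\gamma_\ell) = d_\ell \cdot \alpha$. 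Hence $d_\ell \cdot \alpha$ lies in the image of $(\beta_0)$ for every prime $\ell \neq p$.

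Since each $d_\ell$ is coprime to $\ell$, the greatest common divisor of the family $\{d_\ell\}_{\ell \neq p}$ is coprime to every prime $\ell \neq p$ and is therefore a power of $p$, say $p^N$. Bezout's identity applied to a finite subfamily provides integers $a_i$ and primes $\ell_1,\ldots,\ell_s \neq p$ with $\sum_i a_i d_{\ell_i} = p^N$, so $p^N \cdot \alpha$ lies in the image of $(\beta_0)$. Since $p^N$ is a unit in $\mathbb Z[\tfrac{1}{p}]$, one concludes that $\alpha \otimes 1$ is in the image of $(\beta_0) \otimes \mathbb Z[\tfrac{1}{p}]$, as required. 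The main obstacle is the absence, in positive characteristic, of a single alteration of degree coprime to $p$: Gabber's theorem only produces alterations of degree coprime to a chosen prime $\ell \neq p$, so no individual $d_\ell$ need be a unit in $\mathbb Z[\tfrac{1}{p}]$, and this is precisely what the $\gcd$/Bezout step is designed to bypass.
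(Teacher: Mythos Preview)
Your argument is correct, but the paper's proof is organized differently and uses a stronger alteration input.

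Both proofs begin by reducing, via Voevodsky's Gersten resolution, to surjectivity on stalks at finitely generated field extensions $L/k$. From there the paper exploits that both $\mathcal P$ and $\mathbf{h}_0\varphi_{n-1}(M(X))$ are \emph{birational} sheaves: for $Y\in SmProj_k$ the stalk at $k(Y)$ coincides with $\Gamma(Y,-)$, and surjectivity of $(\beta_0)$ on $\Gamma(Y,-)$ is essentially tautological from the construction of $\mathcal P$ (take the summand indexed by $(Y,\beta)$ and the image of $id_{M(Y)}$ under $\sigma_Y$, using the commutative square \eqref{step1.diag} and the surjectivity of $\sigma_{0\ast}$ from \eqref{rmk.Chow-inthom}). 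To pass from a general $L$ to some $k(Y)$ with $Y\in SmProj_k$, the paper invokes Temkin's $p$-alteration theorem, which produces a \emph{single} $Y\in SmProj_k$ with $[k(Y):L]=p^r$, and concludes by a transfer argument.

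Your route instead unpacks the correspondence action explicitly and relies only on Gabber's prime-to-$\ell$ alterations, compensating for the weaker input with the Bezout/gcd trick over finitely many primes $\ell\neq p$. This works, and has the merit of avoiding Temkin's stronger (and more recent) result. Two small remarks: first, the ``gcd of the family $\{d_\ell\}$'' should really be phrased as choosing finitely many $\ell_i$ so that $\gcd(d_{\ell_1},\dots,d_{\ell_s})$ is a $p$-power (start with any $d_{\ell_0}$ and successively kill its prime-to-$p$ factors), which you essentially do. Second, your closing comment about ``the absence of a single alteration of degree coprime to $p$'' being the main obstacle is not quite right: Temkin's theorem provides exactly an alteration of $p$-power degree, i.e.\ a unit in $\mathbb Z[\tfrac{1}{p}]$, and this is precisely what the paper uses to avoid your Bezout step.
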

\begin{proof}
To simplify the notation we will omit $\mathbb Z [\frac{1}{p}]$.
By Voevodsky's Gersten's resolution \cite{MR1764200}*{Thm. 4.37} 
it suffices to show
that for every finitely generated field extension $L/k$, the map
induced on stalks is surjective,
$(\beta _{0})_L: \mathcal P _L \rightarrow \mathbf{h}_0 
\varphi _{n-1}(M(X))_L$.  

Now, we observe that
$\mathcal P$ and $\mathbf{h}_0 \varphi _{n-1}(M(X))$ are
birational sheaves \eqref{rmk.Chow-inthom}.
Thus, if $Y\in SmProj_k$ with function field $k(Y)$ we obtain
the following commutative diagram where the vertical
arrows are the canonical maps to the stalks, which
are isomorphisms by birationality:
\begin{align}  \label{step2.diag}
\begin{split}
\xymatrix{\Gamma(Y, \mathcal P) \ar[rr]^-{(\beta _0)(Y)} \ar[d]_-{\cong}&& 
\Gamma (Y, \mathbf{h}_0 \varphi _{n-1}(M(X))) \ar[d]^-{\cong}
\ar[r]& 0 \\
\mathcal P _{k(Y)} \ar[rr]_-{(\beta _0)_{k(Y)}}
&& \mathbf{h}_0 
\varphi _{n-1}(M(X))_{k(Y)}&}
\end{split}
\end{align}
and the sujectivity of the
top horizontal arrow follows by 
\eqref{rmk.Chow-inthom}
 and the construction
of $\mathcal P$, $(\beta _0)$ \eqref{step1.diag}-\eqref{step2}.

Now, if $k$ has characteristic zero, by 
Hironaka's resolution of singularities
\cite{MR199184}*{Cor. p. 132}
there exists $Y \in SmProj_k$ such that $k(Y)=L$, so the
surjectivity of $(\beta _0)_L$
 follows from \eqref{step2.diag}.  
In case $k$ has positive characteristic $p$,
the work of
de Jong, Gabber, Temkin \cite{MR3665001}*{Thm. 1.2.5},
\cite{MR3329779}*{Thm. 2.1}, \cite{MR1423020}*{Thm. 4.1} implies
the existence of $Y\in SmProj_k$
such that $k(Y)/L$ is a finite extension of degree $p^r$, and then
the surjectivity of $(\beta _0)_L$
follows by a transfer argument from \eqref{step2.diag} since 
$p^r$ is a unit in $\mathbb Z [\frac{1}{p}]$. 
\end{proof}

The following lemma will be necessary in the next section:

\begin{lem}  \label{step2.lem}
With the notation and conditions of \eqref{ss.torsioncoef}.
Let $\mathcal F , \mathcal G \in \HINST{k}$ \eqref{ss.main.suff}
be sheaves of $\mathbb Z [\tfrac{1}{p}]$-modules.
Assume that $\mathcal F$ is birational, and that
$\mathcal G$ satisfies the following condition:
$\Gamma (Y, \mathcal G)=0$ for every $Y\in SmProj_k$.
Then,
$\Hom _{\HINST{k}}(\mathcal F  , \mathcal G)=0.$
\end{lem}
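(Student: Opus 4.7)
The plan is to show that any morphism $\phi : \mathcal F \to \mathcal G$ in $\HINST{k}$ vanishes on each $\mathcal F(Y) \to \mathcal G(Y)$ for connected $Y \in Sm_k$. Since $k$ is perfect and $\mathcal G$ is a homotopy invariant Nisnevich sheaf with transfers, Voevodsky's Gersten resolution \cite{MR1764200}*{Thm. 4.37} yields an injection $\mathcal G(Y) \hookrightarrow \mathcal G(k(Y))$; combined with the birational identification $\mathcal F(Y) \xrightarrow{\cong} \mathcal F(k(Y))$ and the naturality square of $\phi$, this reduces the problem to showing that the induced map on generic-point stalks $\phi_L : \mathcal F(L) \to \mathcal G(L)$ is zero for every finitely generated field extension $L/k$.

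For this I would run a transfer argument parallel to the one in \eqref{step2.prop}. By Hironaka's resolution \cite{MR199184}*{Cor. p. 132} in characteristic zero, or by the work of de Jong, Gabber and Temkin \cite{MR3665001}*{Thm. 1.2.5}, \cite{MR3329779}*{Thm. 2.1}, \cite{MR1423020}*{Thm. 4.1} in positive characteristic, pick $W \in SmProj_k$ together with a finite field extension $k(W)/L$ of degree $p^r$ (with $r = 0$ in characteristic zero). Let $f : \spec{k(W)} \to \spec{L}$ be the associated finite morphism. Since $\mathcal F$ and $\mathcal G$ are sheaves with transfers and $\phi$ is natural with respect to them, the standard identities $\mathrm{Tr}_f \circ f^\ast = p^r \cdot \mathrm{id}$ and $\mathrm{Tr}_f \circ \phi_{k(W)} = \phi_L \circ \mathrm{Tr}_f$ combine to yield
\begin{align*}
p^r \cdot \phi_L \;=\; \mathrm{Tr}_f \circ \phi_{k(W)} \circ f^\ast.
\end{align*}

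It therefore suffices to check that $\phi_{k(W)} = 0$, and this is where the two sheaf-level hypotheses combine. Since $\mathcal F$ is birational, the restriction map $\mathcal F(W) \to \mathcal F(k(W))$ is an isomorphism, so by naturality of $\phi$ the map $\phi_{k(W)}$ factors as
\begin{align*}
\mathcal F(k(W)) \xleftarrow{\cong} \mathcal F(W) \xrightarrow{\phi_W} \mathcal G(W) \longrightarrow \mathcal G(k(W)),
\end{align*}
and $\mathcal G(W) = 0$ by hypothesis, forcing $\phi_{k(W)} = 0$ and hence $p^r \cdot \phi_L = 0$. As $\mathcal F(L)$ and $\mathcal G(L)$ are $\mathbb Z[\tfrac{1}{p}]$-modules, $p^r$ is invertible on them, giving $\phi_L = 0$. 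I expect the main obstacle to be bookkeeping rather than conceptual: ensuring that the Gersten injection and the transfer identities are available verbatim for stalks of $\HINST{k}$-objects at finitely generated field extensions, so that the reduction to $\phi_L$ and the cancellation of $p^r$ are both rigorous; once these are in place, the structure of the argument is essentially forced by that of \eqref{step2.prop}.
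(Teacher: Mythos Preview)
Your proposal is correct and follows essentially the same route as the paper: reduce via Voevodsky's Gersten resolution to vanishing on stalks $\phi_L$, use resolution of singularities (or de Jong--Gabber--Temkin alterations in positive characteristic) to produce $W\in SmProj_k$ with $[k(W):L]=p^r$, observe that $\phi_{k(W)}$ factors through $\mathcal G(W)=0$ by birationality of $\mathcal F$, and cancel $p^r$. The paper's proof is terser---it simply points back to ``the argument after \eqref{step2.diag}'' for the transfer step---but the content is the same.
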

\begin{proof}
Let $f\in \Hom _{\HINST{k}}(\mathcal F  , \mathcal G)$.
To conclude  that $f=0$, it suffices to show, by Voevodsky's
Gersten's resolution \cite{MR1764200}*{Thm. 4.37}, that for every
finitely generated field extension $L/k$, the map induced
on stalks is zero, $0=f_L: \mathcal F _L \rightarrow \mathcal G _L$.

Now, for $Y\in SmProj _k$ with function field $k(Y)$, we obtain
the following commutative diagram where the vertical arrows
are the canonical maps to the stalks:
\begin{align}  \label{lem.diag}
\begin{split}
\xymatrix{\Gamma(Y, \mathcal F) \ar[rr]^-{f(Y)} \ar[d]_-{\cong}&& 
\Gamma (Y, \mathcal G ) = 0  \ar[d]\\
\mathcal F _{k(Y)} \ar[rr]_-{f_{k(Y)}}
&& \mathcal G _{k(Y)}}
\end{split}
\end{align}
and the left vertical arrow is an 
isomorphism, since $\mathcal F$ is birational.
Thus, we conclude that $f_{k(Y)}=0$.

Then, 
the result follows from \eqref{lem.diag}, applying
the argument after \eqref{step2.diag}.
\end{proof}

\subsubsection{}  \label{step3}
Let
\begin{align*}
\xymatrix{\mathcal K \ar[r]&\mathcal P \ar[r]^-{(\beta _0)}& 
\mathbf{h}_0 \varphi _{n-1}(M(X))}
\end{align*}
be a distinguished triangle in $\DMeff{k}$.
Then, since $\{ \mathbf{h}_i:\DMeff{k}\rightarrow \mathbf{HI}_k,
 i\in \mathbb Z\}$ is a cohomological functor \cite{MR751966}*{Thm. 1.3.6}, 
 by \eqref{step2.prop}
 we conclude that (see \ref{sss.tensprod}):
\begin{align}  \label{step3.diagram}
\mathcal K \otimes \mathbb Z 
[\tfrac{1}{p}] \cong Ker (\beta _0) \otimes \mathbb Z 
[\tfrac{1}{p}]
 \in \HINST{k}.
\end{align}

\subsubsection{}  \label{step4}
In the rest of this section we assume further that
$\alpha \in CH^n(X)_R$
satisfies \eqref{vanishing.pair1} and \eqref{vanishing.pair2},
and that $R$ is flat over $\mathbb Z$ \eqref{coeffs.not}.  Then,
combining \eqref{step1} 
and \ref{prop.1hom.vanpair2}\eqref{prop.1hom.vanpair2.b},
we obtain the following commutative diagram in $\DMeff{k}$:
\begin{align}  \label{step4.diagram}
\begin{split}
\xymatrix{M(Y)\ar[d]_-{\beta} \ar[r]^-{\sigma _Y}
& \mathbf{h}_0 M(Y) \ar[d]^-{\beta _0}\\
\tau _{\geq 0}\varphi _{n-1}(M(X)) \cong \varphi _{n-1}(M(X)) 
\ar[r]^-{\sigma _0} 
\ar[d]_-{\alpha ^{(1)}}& 
\mathbf{h}_0 \varphi _{n-1}(M(X)) \ar[dl]^-{\alpha _0 ^{(1)}}\\
		\sphere _R (1)[2]&}
\end{split}
\end{align}
for any $\beta \in
\Hom _{\DMeff{k}}(M(Y), \varphi _{n-1}(M(X))) 
\cong CH^{d-n+1}(X\times Y)$ \eqref{prop.Chow-inthom}, and
where $\alpha ^{(1)}$, $\alpha ^{(1)}_0$
are the maps in 
\ref{prop.1hom.vanpair2}\eqref{prop.1hom.vanpair2.b}.

Now, since
the functor $DM_k\rightarrow DM_k$,
$E\mapsto E(1)[2]$ is triangulated and an equivalence of categories,
by \eqref{prop.vanpair-vancomp} and
the definition of $\alpha ^{(1)}$
\eqref{sss.DMeff.van.equiv} we deduce that
the map induced by $\alpha ^{(1)}$  is zero:
\begin{align*}
\xymatrix{
\Hom _{\DMeff{k}}(M(Y), \varphi _{n-1}(M(X))) 
\ar[r]^-{\alpha ^{(1)} _{\ast} =0} &
\Hom _{\DMeff{k}}(M(Y),\sphere _R (1)[2])}
\end{align*}
Thus, in \eqref{step4.diagram}:
\begin{align*}
0=\alpha ^{(1)} \circ \beta =
(\alpha _0 ^{(1)} \circ \beta _0 ) \circ \sigma _Y
\end{align*}
Then, since
\begin{align*}
\xymatrix{\tau _{\geq 1}M(Y) \ar[r] & M(Y) \ar[r]^-{\sigma _Y}
& \mathbf{h}_0 M(Y)}
\end{align*}
is a distinguished triangle in $\DMeff{k}$,
$\Hom _{\DMeff{k}}((\tau _{\geq 1}M(Y))[1], \sphere _R (1)[2])=0$
and $(\alpha _0 ^{(1)}\circ \beta _0 ) \circ \sigma _Y=0$;
we conclude that in \eqref{step4.diagram}:
\begin{align*}
\alpha _0 ^{(1)} \circ  \beta _0  =0.
\end{align*}
Hence, by construction of $\mathcal P$ and $(\beta _0)$ 
 \eqref{step2}, we deduce that:
\begin{align}  \label{step4.map}
\begin{split}
\xymatrix{0=\alpha _0 ^{(1)}\circ (\beta _0):\mathcal P
\rightarrow \sphere _R(1)[2].}
\end{split}
\end{align}

\subsubsection{}  \label{step5}
By \eqref{step3} and \eqref{step4.map}
there exists $\gamma ^X \in
\Hom _{\DMeff{k}}(\mathcal K , \sphere _R (1)[1])$
such that the following diagram in $\DMeff{k}$ commutes:
\begin{align*}
\xymatrix{\mathcal K \ar[r]&\mathcal P \ar[r]^-{(\beta _0)} \ar[dr]_-{0}
& \mathbf{h}_0
\varphi _{n-1}(M(X)) \ar[d]^-{\alpha _0 ^{(1)}} \ar[r]^-{\delta}&\mathcal K [1]
\ar[dl]^-{\gamma ^X [1]}\\
&&\sphere _R (1)[2] &}
\end{align*}
where $\alpha _0 ^{(1)}$ is the map in 
\ref{prop.1hom.vanpair2}\eqref{prop.1hom.vanpair2.b}.

\subsection{}  \label{ss.consuni}
In the rest of this section we  assume further that the base field
$k$ is algebraically closed and that $R=\mathbb Z [\tfrac{1}{p}]$
\eqref{coeffs.not}.

\subsubsection{}  \label{step6}
Let $k^\ast \in \HINST{k}$ \eqref{ss.main.suff}
be the constant sheaf of units in the base 
field \cite{MR2242284}*{2.2}, and consider the canonical map
 in $\HINST{k}$:
\begin{align*}
\xymatrix{0\ar[r] & k^\ast  \ar[r]^-l & \mathcal O ^\ast }
\end{align*}
which is an inclusion, since $k$ is algebraically closed \eqref{ss.consuni},
so $Y(k)\neq \emptyset$ for every $Y\in Sm_k$.
Let
\begin{align*}
\xymatrix{ k^\ast  \ar[r]^-l & \mathcal O ^\ast  \ar[r]^-m&
\mathcal C}
\end{align*}
be a distinguished triangle in $\DMeff{k}$.  Then,
since $\{ \mathbf{h}_i:\DMeff{k}\rightarrow \mathbf{HI}_k,
 i\in \mathbb Z\}$ is a cohomological functor \cite{MR751966}*{Thm. 1.3.6}, 
 we deduce that
$\mathcal C \cong Coker(l) \in \HINST{k}$ \eqref{ss.main.suff}.

\subsubsection{}
Consider the following diagram in $\DMeff{k}$:
\begin{align}  \label{diag.unitfac}
\begin{split}
\xymatrix{& \mathcal K \ar[d]^-{\gamma ^X}
  \ar@{-->}[dl]_{\gamma _1 ^X} &  \\
k^\ast \otimes R \ar[r]^-{l_R} & \mathcal O ^\ast \otimes R \ar[r]^-{m_R}&
\mathcal C \otimes R} 
\end{split}
\end{align}
where $\gamma ^X:\mathcal K \rightarrow \sphere _R (1)[1]\cong 
\mathcal O ^\ast \otimes R$  is the map in
\eqref{step5}.

\begin{prop}  \label{prop.unitfac}
With the notation and conditions
of \eqref{ss.torsioncoef}, \eqref{step4}, \eqref{ss.consuni} and
\eqref{diag.unitfac}.  Then, $m_R \circ \gamma ^X=0$ in
\eqref{diag.unitfac}.
\end{prop}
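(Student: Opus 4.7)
The plan is to translate the statement into an assertion in $\HINST{k}$ and prove it by adapting the strategy of \eqref{step2.lem}. First, $\mathcal{P}$ is a direct sum of the sheaves $\mathbf{h}_0 M(Y)$ for $Y\in SmProj_k$, which are birational by applying \eqref{rmk.Chow-inthom} with $s=0$ and using $\varphi_0(M(Y))=M(Y)$; the sheaf $\mathbf{h}_0\varphi_{n-1}(M(X))$ is also in the heart. Combined with \eqref{step3.diagram}, the triangle defining $\mathcal{K}$ shows that $\mathcal{K}\otimes R$ lies in the heart and equals $Ker(\beta_0)\otimes R$, which as a sub-sheaf of the birational sheaf $\mathcal{P}\otimes R$ is itself birational. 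Since $\mathcal{C}\otimes R$ is in the heart and is an $R$-module, $m_R\circ\gamma^X$ corresponds to a morphism $\phi:\mathcal{K}\otimes R\to \mathcal{C}\otimes R$ in $\HINST{k}$, and the proposition reduces to showing $\phi=0$.

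To verify $\phi=0$, I would use Voevodsky's Gersten resolution \cite{MR1764200}*{Thm. 4.37} to reduce to the vanishing of the stalks $\phi_L$ for every finitely generated field extension $L/k$. For such an $L$, Hironaka in characteristic zero, or de Jong--Gabber--Temkin in positive characteristic \cite{MR3665001}*{Thm. 1.2.5}, \cite{MR3329779}*{Thm. 2.1}, \cite{MR1423020}*{Thm. 4.1}, provides $Y\in SmProj_k$ with $k(Y)/L$ finite of degree a power of the exponential characteristic $p$, which is invertible in $R=\mathbb{Z}[\tfrac{1}{p}]$; the transfer argument used at the end of the proof of \eqref{step2.prop} then reduces $\phi_L=0$ to $\phi_{k(Y)}=0$.

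The heart of the argument is the case $L=k(Y)$ with $Y\in SmProj_k$ (in particular, integral and connected). By birationality of $\mathcal{K}\otimes R$ its stalk at $k(Y)$ coincides with $\Gamma(Y,\mathcal{K}\otimes R)$, while $(\mathcal{O}^*\otimes R)_{k(Y)}=k(Y)^*\otimes R$. Here the hypothesis that $k$ is algebraically closed becomes crucial: since $Y$ is proper and connected over $k$, $\Gamma(Y,\mathcal{O}^*)=k^*$, and so $\Gamma(Y,\mathcal{O}^*\otimes R)=k^*\otimes R$, which is precisely the image of the injection $\Gamma(Y,l_R)$. Thus $\Gamma(Y,\gamma^X)$ takes values in $k^*\otimes R$, and passing from $Y$-sections to stalks at $k(Y)$ (using birationality on the source and the canonical inclusion $k^*\otimes R\hookrightarrow k(Y)^*\otimes R$ on the target) shows that $(\gamma^X)_{k(Y)}$ factors through $(l_R)_{k(Y)}$; composing with $m_R$ therefore yields zero, so $\phi_{k(Y)}=0$.

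The main obstacle is technical bookkeeping rather than conceptual content: one must carefully justify the translation to $\HINST{k}$, verify that $-\otimes R$ commutes with $\Gamma(Y,-)$ on the sheaves at issue (which holds because $R$ is a localization and $Y$ is noetherian), and check that the Gersten-plus-transfer reduction goes through with the $p^r$-degree extensions provided by de Jong--Gabber--Temkin (which is precisely why we localize at the exponential characteristic). The essential conceptual input is the elementary fact that the global invertible regular functions on a smooth projective connected variety over an algebraically closed field are exactly the constants, and this is what forces $\gamma^X$ to land in the image of $l_R$ on global sections.
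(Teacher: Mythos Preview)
Your argument is correct and follows the same strategy as the paper's proof. The paper packages the Gersten-plus-alterations reduction into Lemma~\eqref{step2.lem} and then applies it by showing $\Gamma(Y,\mathcal C)=0$ for every $Y\in SmProj_k$ (using the long exact sequence and $H^1_{Nis}(Y,k^\ast)=0$), whereas you inline that lemma and argue directly that $\Gamma(Y,\gamma^X)$ lands in $k^\ast\otimes R=\operatorname{im}\Gamma(Y,l_R)$, which sidesteps the $H^1$ computation; both rest on the same key input that $\Gamma(Y,\mathcal O^\ast)=k^\ast$ for smooth projective connected $Y$ over an algebraically closed field. One small remark: your justification that $\mathcal K\otimes R$ is birational ``as a sub-sheaf of a birational sheaf'' is correct but deserves a word of explanation (left exactness of the contraction $(-)_{-1}$, or equivalently that it is the kernel of a map between birational sheaves); the paper cites \cite{MR3737321}*{Prop.~2.6.2} for this.
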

\begin{proof}
We observe that $R=\mathbb Z [\tfrac{1}{p}]$ \eqref{ss.consuni}.
By \eqref{prop.vanchancoef}, it suffices to show that the
following composition is zero:
\begin{align*}
\xymatrix{\mathcal K \otimes R \ar[r]^-{\gamma ^X \otimes R}& 
(\mathcal O  ^\ast \otimes R)\otimes R \ar[r]^-{m_R\otimes R}&
(\mathcal C \otimes R)\otimes R.}
\end{align*}

Consider the following
distinguished triangle in $\DMeff{k}$ \eqref{step3}:
\begin{align*}
\xymatrix{\mathcal K \ar[r]&\mathcal P \ar[r]^-{(\beta _0)}& 
\mathbf{h}_0 \varphi _{n-1}(M(X))}
\end{align*}
where $\mathcal P$  \eqref{step2}
and  $\mathbf{h} _0 \varphi _{n-1}(M(X))$ are
in the heart of the homotopy $t$-structure, $\HINST{k}$; and
also are birational sheaves \eqref{rmk.Chow-inthom}.
By \eqref{prop.tenscoef}, we deduce
that $\mathcal P\otimes R$, $\mathbf{h} _0 \varphi _{n-1}(M(X))
\otimes R \in \HINST{k}$ (see \ref{sss.tensprod})
and that they are  birational sheaves as well.
On the other hand, $\mathcal K \otimes R\in \HINST{k}$ by
\eqref{step3.diagram}.  Then, \cite{MR3737321}*{Prop. 2.6.2} implies
that $\mathcal K \otimes R$ is a birational sheaf.

Thus, combining \eqref{step2.lem} with \eqref{prop.tenscoef}
we conclude that
it suffices to see that
for every $Y\in SmProj _k$: $\Gamma (Y, \mathcal C )=0$.

Now, since the base field $k$ is algebraically closed, we conclude that
the map induced by $l$ in \eqref{step6} is an isomorphism,
$l_{\ast}: \Gamma (Y, k^\ast) \rightarrow
\Gamma (Y, \mathcal O ^\ast)$, for every $Y\in SmProj_k$.
Since
$k^\ast  \stackrel{l}{\longrightarrow}  \mathcal O ^\ast  
\stackrel{m}{\longrightarrow} \mathcal C$ is a distinguished
triangle in $\DMeff{k}$ \eqref{step6}, and:
\begin{align*}
\Hom _{\DMeff{k}}(M(Y), k^\ast [1])\cong
H^1_{Nis}(Y, k^\ast)\cong H^1_{Zar}(Y, k^\ast)=0, 
\end{align*}
we deduce that
$\Gamma (Y, \mathcal C)=0$ for every $Y\in SmProj_k$,
which finishes the proof.
\end{proof}

\subsubsection{}
We observe that the bottom row in \eqref{diag.unitfac} is
a distinguished triangle in $\DMeff{k}$ \eqref{step6}.  So, by
\eqref{prop.unitfac} there exists $\gamma _1 ^X$ such that
\eqref{diag.unitfac} commutes,
and thus by \eqref{step5}
 the following diagram  in $\DMeff{k}$ commutes:
\begin{align}  \label{diag.fact.pairing}
\begin{split}
\xymatrix{\mathbf{h}_0
\varphi _{n-1}(M(X)) \ar[d]_-{\alpha _0 ^{(1)}} \ar[r]^-{\delta}&\mathcal K [1]
\ar[dl]_-{\gamma ^X [1]} \ar[d]^-{\gamma _1 ^X [1]}\\
\sphere _R (1)[2]\cong \mathcal O ^\ast \otimes R [1] & 
\ar[l]^-{l_R[1]} k^\ast \otimes R [1]}
\end{split}
\end{align}
where $\alpha _0 ^{(1)}$is the map in 
\ref{prop.1hom.vanpair2}\eqref{prop.1hom.vanpair2.b}.

\subsection{Proof of the main result for the algebraic closure of a finite field}

\subsubsection{}  \label{sss.finitefield}
In the rest of this section, we  assume further that the base field
$k$ is the algebraic closure of a finite field and that $R=\mathbb Q$
\eqref{coeffs.not}.

\begin{prop}  \label{prop.vanish.units}
With the notation and conditions
of \eqref{ss.torsioncoef}, \eqref{step4},  \eqref{step6}
 and \eqref{sss.finitefield}.
 Then $k^\ast \otimes \mathbb Q \cong 0$ in $\DMeff{k}$.
\end{prop}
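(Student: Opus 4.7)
The plan is to exploit that $k^\ast$ is a torsion abelian group: since $k$ is the algebraic closure of a finite field of characteristic $p$, every element of $k^\ast$ is a root of unity of order prime to $p$. Concretely, $k^\ast = \colim _N \mu_N(k)$, where $\mu_N(k)$ denotes the finite group of $N$-th roots of unity in $k$. As constant Nisnevich sheaves with transfers, this identifies $k^\ast$ with a filtered colimit $\colim _N \mu_N(k)$ in $\HINST{k}$.

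First, I would identify $k^\ast$ with $\hocolim _N \mu_N(k)$ in $\DMeff{k}$: the inclusion $\HINST{k}\hookrightarrow \DMeff{k}$ of the heart of Voevodsky's homotopy $t$-structure preserves filtered colimits, so the sheaf-theoretic filtered colimit computed in $\HINST{k}$ is also a homotopy colimit in $\DMeff{k}$ (and hence, via \eqref{subsubsec.cancel}, in $\DM{k}$).

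Next, for any compact generator $M(Y)(s)[r]$ of $\DM{k}$ \eqref{eq.DMgens}, combining compactness of $M(Y)(s)[r]$ with \eqref{prop.tenscoef} yields
\begin{align*}
\Hom _{\DM{k}}(M(Y)(s)[r], k^\ast \otimes \mathbb Q)
&\cong \Hom _{\DM{k}}(M(Y)(s)[r], k^\ast) \otimes \mathbb Q \\
&\cong \bigl(\colim _N \Hom _{\DM{k}}(M(Y)(s)[r], \mu_N(k))\bigr) \otimes \mathbb Q.
\end{align*}
Each term in the colimit is annihilated by $N$, so the colimit is a torsion abelian group, and tensoring with $\mathbb Q$ gives zero.

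Since the Hom into $k^\ast \otimes \mathbb Q$ vanishes for every compact generator of $\DM{k}$, I would conclude $k^\ast \otimes \mathbb Q \cong 0$ in $\DM{k}$, and hence in $\DMeff{k}$ by the equivalence \eqref{subsubsec.cancel}. The main technical point I expect is the first step, namely making precise the identification of the sheaf-theoretic filtered colimit of the $\mu_N(k)$ with $k^\ast$ at the level of $\DMeff{k}$; this ultimately rests on compact generation of $\DMeff{k}$ and on the fact that filtered colimits of heart-objects for Voevodsky's homotopy $t$-structure remain in the heart, after which the interchange of $\Hom$ with the filtered homotopy colimit used above is automatic.
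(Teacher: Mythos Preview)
Your argument is correct: the core observation that $k^\ast$ is a torsion abelian group when $k$ is the algebraic closure of a finite field does the work, and your reduction via compact generation and \eqref{prop.tenscoef} is sound. The technical point you flag (identifying the sheaf-theoretic filtered colimit with a homotopy colimit in $\DMeff{k}$) is valid for the homotopy $t$-structure, since both aisles are closed under filtered homotopy colimits.

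That said, the paper takes a shorter route that sidesteps the filtered colimit decomposition entirely. Since $k^\ast \in \HINST{k}$, \eqref{prop.tenscoef} gives $k^\ast \otimes \mathbb Q \in \HINST{k}$ as well; an object of the heart vanishes iff all its sections vanish, so it is enough to check $\Hom_{\DMeff{k}}(M(Y), k^\ast \otimes \mathbb Q)=0$ for $Y\in Sm_k$. Again by \eqref{prop.tenscoef} this is $\Hom_{\DMeff{k}}(M(Y), k^\ast)\otimes \mathbb Q$, and $\Hom_{\DMeff{k}}(M(Y), k^\ast)$ is just the sections of the constant sheaf $k^\ast$ over $Y$, i.e.\ a product of copies of the torsion group $k^\ast$, which dies upon tensoring with $\mathbb Q$. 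This avoids both the colimit identification and the need to test against all twisted and shifted generators of $\DM{k}$. Your approach has the advantage that it would apply even without knowing in advance that $k^\ast \otimes \mathbb Q$ lies in the heart, but here that extra generality is not needed.
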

\begin{proof}
We observe that $k^\ast \in \HINST{k}$, so $k^\ast \otimes
\mathbb Q \in \HINST{k}$ by \eqref{prop.tenscoef}.
So it only remains to show that for every $Y\in Sm_k$:
$\Hom _{\DMeff{k}}(M(Y), k^\ast \otimes \mathbb Q)=0$.

Now, by \eqref{prop.tenscoef} we deduce that:
\begin{align*}
\Hom _{\DMeff{k}}(M(Y), k^\ast \otimes \mathbb Q) \cong
\Hom _{\DMeff{k}}(M(Y), k^\ast )\otimes \mathbb Q  \cong
k^\ast \otimes \mathbb Q ,
\end{align*}
and since
$k^\ast$ is a torsion group for the algebraic closure of a finite field,
we conclude that $0\cong k^\ast \otimes \mathbb Q$,
which finishes the proof.
\end{proof}

\begin{prop}  \label{prop.alpha1ff}
With the notation and conditions
of \eqref{ss.torsioncoef}, \eqref{step4},  
 and \eqref{sss.finitefield}.
 Then, $\alpha _0 ^{(1)} =0$ in
  \ref{prop.1hom.vanpair2}\eqref{prop.1hom.vanpair2.b}.
\end{prop}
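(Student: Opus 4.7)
The plan is to combine the factorization diagram \eqref{diag.fact.pairing} established in the previous subsection with the vanishing statement \eqref{prop.vanish.units}, both of which were engineered precisely for this argument.

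First, I would unpack \eqref{diag.fact.pairing}. From \eqref{step5} we already know that $\alpha_0^{(1)}$ factors through the connecting map $\delta$, that is,
\begin{equation*}
\alpha_0^{(1)} = \gamma^X[1] \circ \delta,
\end{equation*}
where $\gamma^X \colon \mathcal{K} \to \sphere_R(1)[1] \cong \mathcal{O}^\ast \otimes R$. Then, by \eqref{prop.unitfac}, the composition $m_R \circ \gamma^X$ vanishes, so using the distinguished triangle $k^\ast \otimes R \xrightarrow{l_R} \mathcal{O}^\ast \otimes R \xrightarrow{m_R} \mathcal{C} \otimes R$ from \eqref{step6}, the map $\gamma^X$ admits a lift $\gamma_1^X \colon \mathcal{K} \to k^\ast \otimes R$ with $\gamma^X = l_R \circ \gamma_1^X$. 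Combining these two steps yields
\begin{equation*}
\alpha_0^{(1)} = l_R[1] \circ \gamma_1^X[1] \circ \delta.
\end{equation*}

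Second, since we are now in the setting \eqref{sss.finitefield} where $k$ is the algebraic closure of a finite field and $R = \mathbb{Q}$, \eqref{prop.vanish.units} gives $k^\ast \otimes \mathbb{Q} \cong 0$ in $\DMeff{k}$. Consequently
\begin{equation*}
\Hom_{\DMeff{k}}(\mathcal{K}[1],\, k^\ast \otimes \mathbb{Q}\,[1]) = 0,
\end{equation*}
so $\gamma_1^X[1] = 0$. Plugging this into the factorization above yields $\alpha_0^{(1)} = 0$, as claimed.

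There is essentially no obstacle left at this stage: all of the difficult technical work (the construction of $\mathcal{P}$ and $\mathcal{K}$, the surjectivity and birationality arguments of \eqref{step2.prop}--\eqref{step3}, the production of $\gamma^X$, and the lifting through $k^\ast \otimes R$) has already been carried out in \eqref{ss.torsioncoef}--\eqref{ss.consuni}. The present proposition is simply the clean conclusion, whose only non-formal input is the torsion nature of $k^\ast$ for $k = \overline{\mathbb{F}_q}$, which is exactly what \eqref{prop.vanish.units} exploits via \eqref{prop.tenscoef}. Thus the proof reduces to a short chase through diagram \eqref{diag.fact.pairing}.
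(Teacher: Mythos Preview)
Your proposal is correct and follows exactly the paper's approach: the paper's proof simply cites \eqref{diag.fact.pairing} to reduce to showing $\gamma_1^X[1]=0$, and then invokes \eqref{prop.vanish.units}. You have merely unpacked the factorization $\alpha_0^{(1)} = l_R[1]\circ\gamma_1^X[1]\circ\delta$ more explicitly, which is precisely the content of \eqref{diag.fact.pairing}.
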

\begin{proof}
By \eqref{diag.fact.pairing} it suffices to show that the map
$\gamma _1 ^X [1] =0$, which follows from \eqref{prop.vanish.units}.
\end{proof}

%%%%%%%%%%%%%%%%%%%%%%%%%%%%%%%%%%%%%%%%%%%%
%%%%%%%%%%%%%%%%%%%%%%%%%%%%%%%%%%%%%%%%%%%%
%%%%%%%%%%%%%%%%%%%%%%%%%%%%%%%%%%%%%%%%%%%%

\bibliography{biblio_inc-equiv}
\bibliographystyle{abbrv}

\end{document}